\newtheorem{theorem}{Theorem}[section]
\newtheorem{lemma}[theorem]{Lemma}
\newtheorem{proposition}[theorem]{Proposition}
\newtheorem{corollary}[theorem]{Corollary}
\newtheorem{definition}[theorem]{Definition}
\newtheorem{example}[theorem]{Example}
\newtheorem{remark}[theorem]{Remark}
\newtheorem{hypothesis}[theorem]{Hypothesis}
\let\originalleft\left
\let\originalright\right
\renewcommand{\left}{\mathopen{}\mathclose\bgroup\originalleft}
\renewcommand{\right}{\aftergroup\egroup\originalright}
\def\N{\mathbb{N}}
\def\u{\boldsymbol{\zeta}}
\def\v{\boldsymbol{\eta}}
\def\w{\boldsymbol{\mathfrak{z}}}
\def\f{\boldsymbol{f}}
\def\h{\theta}
\def\Drm{\mathrm{D}}
\def\Vbb{\mathbb{V}}
\def\Lbb{\mathbb{L}}
\def\Hbb{\mathbb{H}}
\def\Ocal{\mathcal{Q}}
\def\Pcal{\mathcal{P}}
\def\Acal{\mathcal{A}}
\def\Hcal{\mathcal{H}}
\def\Mrm{\mathrm{M}}
\def\Frm{\mathrm{F}}
\def\Rrm{\boldsymbol{\mathfrak{I}}}
\def\drm{\mathrm{d}}
\def\Urm{\mathcal{Z}}
\def\Wrm{\mathcal{W}}
\def\Grm{\boldsymbol{\Upsilon}}
\def\Lrm{\mathrm{L}}
\def\Qrm{\mathrm{Q}}
\def\Pbb{\mathbb{P}}
\def\Ebb{\mathbb{E}}
\def\Arm{\mathrm{A}}
\def\B{\mathcal{B}}
\def\K{\mathcal{K}}
\def\d{\mathrm{d}}
\def\C{\mathrm{C}}
\def\wi{\widetilde}
\def\Tr{\mathrm{Tr}}
\def\W{\mathrm{W}}
\numberwithin{equation}{section}
\newcommand{\R}{\mathbb{R}}
\renewcommand{\d}{\/\mathrm{d}\/}
\newcommand{\Addresses}{{
		\footnote{
			\noindent \textsuperscript{1}Center for Mathematics and Applications (NOVA Math), NOVA School of Science and Technology (NOVA FCT),	Portugal.\par\nopagebreak
			\noindent 
			
			\textit{e-mail:} \texttt{Kush Kinra: kushkinra@gmail.com, k.kinra@fct.unl.pt.}
			
			\noindent \textsuperscript{*}Corresponding author.
			
			\textit{Key words:} Stochastic continuous data assimilation, stochastic convective Brinkman-Forchheimer equations, additive and multiplicative noise, Foias-Prodi estimates in expected value.
			
			Mathematics Subject Classification (2020): Primary 60H15, 35R60, 76B75; Secondary 35Q30, 60H30, 37C50. 
			
%

}}}
\begin{document}
	
	\title[Continuous data assimilation for stochastic convective Brinkman-Forchheimer equations]{A note on continuous data assimilation for stochastic convective Brinkman-Forchheimer equations in 2D and 3D 
		\Addresses}
	
	\author[Kush Kinra]
	{Kush Kinra\textsuperscript{1*}}

	\begin{abstract}
Continuous data assimilation (CDA) methods, such as the nudging algorithm introduced by Azouani, Olson, and Titi (AOT) [\emph{Azouani et. al., J. Nonlinear Sci., \textbf{24} (2014), 277-304.}], have proven to be highly effective in deterministic settings for asymptotically synchronizing approximate solutions with observed dynamics. In this note, we introduce and analyze an algorithm for CDA for the two- and three-dimensional stochastic convective Brinkman-Forchheimer equations (CBFEs) driven by either additive or multiplicative Gaussian noise. The model is believed to provide an accurate description when the flow velocity exceeds the regime of validity of Darcy’s law and the porosity remains moderately large. We derive sufficient conditions on the nudging parameter and the spatial resolution of observations that ensure convergence of the assimilated solution to the true stochastic flow. We demonstrate convergence in the mean-square sense, and additionally establish pathwise convergence in the presence of additive noise. The CBFEs, also known as Navier-Stokes equations with damping, exhibit enhanced stability properties due to the presence of nonlinear damping term. In particular, we show that nonlinear damping not only enables the implementation of CDA in three dimensions but also yields improved convergence results in two dimensions when compared to the classical Navier-Stokes equations.
	\end{abstract}
	
	\maketitle


\section{Introduction}

\subsection{Mathematical Model and Solvability}
Fluid flow through porous media has attracted sustained attention due to its importance in a wide range of engineering and applied science applications. A porous medium or a porous material is defined as a solid (often called a matrix) permeated by an interconnected network of pores (voids) filled with a fluid (e.g., air or water). In this study, we consider the case of a porous medium that is completely saturated with a fluid. In particular, we consider stochastic convective Brinkman-Forchheimer equations (SCBFEs) which describe the motion of an incompressible fluid through a saturated porous medium. These equations yield a more accurate representation of the flow dynamics in regimes where the fluid velocity is sufficiently large that Darcy’s law alone is no longer applicable, and where the porosity is not negligibly small. For this reason, such models are commonly referred to as \emph{non-Darcy} models in the literature (see, for instance, \cite{GKM_AMOP,PAM}).

Let $\Ocal \subset \R^d$, with $d=2,3$, be a bounded domain with smooth boundary $\partial\Ocal$. We denote by $\u(x,t) \in \R^d$ the fluid velocity at spatial position $x$ and time $t$, by $p(x,t) \in \R$ the associated pressure field, and by $\f(x) \in \R^d$ an external body force. The stochastic incompressible system considered in this note is given by
\begin{equation}\label{1}
	\left\{
	\begin{aligned}
		\drm \u -\mu \Delta\u \drm t + (\u\cdot\nabla)\u \drm t + \alpha \u \drm t + \beta|\u|^{\varpi-1}\u \drm t +\nabla p \drm t &=\f \drm t + \Grm(t,\u)\drm\Wrm, && \text{ in } \ \Ocal\times(0,\infty), \\ \nabla\cdot\u&=0, \ &&\text{ in }  \Ocal\times[0,\infty), \\
		\u&=\mathbf{0},&& \text{ on } \ \partial\Ocal\times[0,\infty), \\
		\u(0)&=\u_0, && \text{ in } \ \Ocal,
	\end{aligned}
	\right.
\end{equation} 
where $\Wrm$ is a $\Qrm$-Wiener process  and $\Grm(\cdot,\cdot)$ is the noise coefficient (see Subsection \ref{Sto-set} for the details). To guarantee uniqueness of the pressure $p$, we additionally impose the normalization condition $\int_{\Ocal} p(x,t)\,\drm x = 0$, $t \in [0,\infty)$. The parameter $\mu>0$ denotes the Brinkman coefficient, which plays the role of an effective viscosity. The positive constants $\alpha$ and $\beta$ represent the Darcy coefficient, related to the permeability of the porous medium, and the Forchheimer coefficient, associated with the porosity of the material, respectively.   When $\alpha = \beta = 0$, the system reduces to the classical $d$-dimensional stochastic Navier-Stokes equations (SNSEs). Consequently, system \eqref{1} can be interpreted as the SNSEs with damping. The absorption exponent satisfies $\varpi \in [1,\infty)$, where the case $\varpi = 3$ is referred to as the \emph{critical} exponent, while $\varpi > 3$ corresponds to a rapidly growing nonlinearity (see, for example, \cite{GKM_AMOP}). In the remainder of this work, we distinguish between subcritical, critical, and supercritical SCBFEs corresponding to $\varpi<3$, $\varpi=3$, and $\varpi>3$, respectively.

For spatial dimensions $d=2,3$ and exponents $\varpi \in [3,\infty)$, with the additional assumption $2\mu\beta \geq1$ in the critical case $\varpi=3$, global existence and uniqueness of pathwise strong solutions (in the probabilistic sense) satisfying the energy equality (It\^o’s formula) for system \eqref{1} were established in \cite{KK+MTM-SCBF}. The analysis therein is based on the monotonicity properties of the linear and nonlinear operators combined with a stochastic version of the Minty-Browder argument. For the case of additive rough Gaussian noise taking values in Lebesgue spaces, related results were obtained in \cite{KKMTM-DCDSB}, where the authors also proved the existence of random attractors and invariant measures. More recently, in \cite{KK+FC+MTM-SCBF}, global existence and uniqueness of pathwise strong solutions satisfying the energy equality were proved for system \eqref{1} in all parameter regimes listed in Table~\ref{Table}.
	\begin{table}[ht]
	{	\begin{tabular}{|c|c|c|c|c|}
			\hline
			\textbf{Cases}& $d$ &$ \varpi$& conditions on $\mu>0$, $\alpha>0$ \& $\beta>0$ \\
			\hline
			\textbf{I}& $d=2$ &$1\leq \varpi <\infty$&  no additional condition  \\
			\hline
			\textbf{II}& $d=3$ &$3< \varpi <\infty$& no additional condition \\
			\hline
			\textbf{III}& $d=3$ &$\varpi=3$&for  $2\mu\beta\geq1$ \\
			\hline
		\end{tabular}
		\vskip 0.1 cm
		\caption{Values of $\mu, \alpha, \beta$ and $\varpi$ for the existence of unique strong solution}
		\label{Table}}
\end{table}
 The proof relies on a classical Faedo-Galerkin approximation, combined with stochastic compactness arguments, Jakubowski’s extension of the Skorokhod representation theorem to nonmetric spaces, and the martingale representation theorem. For additional results concerning the deterministic setting and stochastic models driven by L\'evy noise, we refer the reader to \cite{GM_2025,MTM6,MTM9} and the references cited therein.

\subsection{Continuous Data Assimilation} 

Data assimilation aims to incorporate spatially discrete measurements and observational data into physical or numerical models. Its primary motivation stems from the inherent imperfections of deterministic models, which are often incomplete due to the inability to capture all relevant physical processes. In particular, data assimilation has played a central role in improving weather forecasting, and there is growing interest in applying these techniques to a wide range of problems arising in various scientific and engineering applications (\cite{PAM}).  

\subsubsection{Background} In the classical formulation of continuous data assimilation (CDA), as presented by Daley \cite{daley1993atmospheric}, observational measurements are incorporated directly into the model during its time evolution. A more recent development, proposed by Azouani, Olson, and Titi (AOT) in \cite{azouani2014continuous}, recasts CDA as a feedback control (or nudging) procedure inspired by concepts from control theory \cite{AbderrahimTiti2014}. In this approach, an additional relaxation term is introduced to steer the model solution toward the observed data. The resulting assimilated system produces an approximate trajectory that converges asymptotically to the reference (true) solution. Under appropriate assumptions on the regularity of the data and the model, this convergence can be shown to be exponential. Consequently, the AOT algorithm provides an efficient and practical tool for estimating the present and future states of a dynamical system when only sparse observations are available. 
 
 The AOT approach has been extensively employed in the literature for a wide range of deterministic physical models, including equations governing Newtonian and non-Newtonian fluid flows; see, for example, \cite{bessaih2022continuous,FGHMMW,JST,BHLP,PAM,LP,CGJA,BP_SIMA_2021,BB_RMS_2025,TV_DCDS-S_2025,NKC_2025_Submit} and the references therein. Its application to stochastic settings, however, has been comparatively limited (see, e.g., \cite{bessaih2015continuous,Blomker}).  Nevertheless, the inclusion of stochastic effects is crucial for realistic modeling, as physical systems are inherently subject to measurement noise and unresolved fluctuations. More recently, the AOT framework was extended to stochastic systems in \cite{BFLZ_2025_Arxiv}, where it was successfully applied to the two-dimensional stochastic Navier-Stokes equations.

 \subsubsection{Methodology} 
 For completeness, we briefly describe the general structure of the continuous data assimilation algorithm. Assume that $\u=\u(t)$ denotes the unknown true state of a dynamical system, which evolves according to
 \begin{align}
 	\frac{\drm\u(t)}{\drm t} = \Frm (\u (t)),\qquad t>0,
 \end{align}
 with an unknown initial data $\u(0)$. Observational data are assumed to be accessible only at a coarse spatial scale via an interpolant operator $\Rrm_{\h}(\u(t))$, where $\h>0$ represents the observation resolution. The CDA algorithm produces an approximate solution by solving the modified equation
 \begin{align*}
 	\frac{\drm \Urm(t)}{\drm t} = \Frm (\Urm(t)) + \sigma \Rrm_{\h} (\u(t) - \Urm(t)),
 \end{align*}
 where   $\sigma>0$ is the nudging parameter. Provided that the parameters $\sigma$ and $\h$ satisfy suitable conditions, the nudged solution $\Urm(t)$ converges to the true state $\u(t)$ as $t \to +\infty$, regardless of the choice of initial data $\u(0)$ and $\Urm(0)$. The estimates quantifying the difference between $\Urm$ and $\u$ are often called Foias--Prodi estimates, which typically guarantee exponential decay in time when $\sigma$ and $\h$ lie within appropriate bounds. Physically relevant examples of the interpolant operator $\Rrm_{\h}$ can be found in \cite{jones1992determining, jones1993upper, foias1991determining}.

Within the stochastic framework, the dynamics of the system are described by a stochastic differential equation (SDE) given by
\begin{align*}
	\drm \u(t) = \Frm(\u(t)) \, \drm t + \Grm(t, \u(t)) \, \drm \Wrm(t),
\end{align*}
where $\Wrm$ is a $\Qrm$-Wiener process  and $\Grm(\cdot,\cdot)$ is the noise coefficient which may depend on the solution or not.  The data assimilation formulation corresponding to the system is written as
\begin{align*}
	\drm \Urm(t) = \left[ \Frm(\Urm(t)) - \sigma \Rrm_{\h}(\Urm(t) - \u(t)) \right]\, \drm t + \Grm (t,\Urm(t)) \, \drm  \Wrm(t).
\end{align*}
Analyzing convergence in the stochastic setting is more delicate due to the influence of random forcing. One can approach this either \emph{pathwise}, considering almost-sure convergence for individual realizations of the noise, or \emph{in expectation}, focusing on mean-square or statistical convergence. For additive noise, the stochastic contributions cancel when tracking the difference $\u(t)-\Urm(t)$, allowing one to prove exponential convergence along each trajectory as $t \to +\infty$, regardless of the initial states $\u(0)$ and $\Urm(0)$. In contrast, for multiplicative noise, such cancellations do not occur, and convergence is typically established only in the sense of expected values.

 \subsection{Main Results} 
 In this work, we investigate the convergence of the solution $\Urm$ of the nudging equation to the solution $\u$ of system \eqref{1} as time tends to infinity. Due to the distinct challenges presented by the subcritical, critical, and supercritical SCBFEs, we analyze each case separately. First, we establish estimates in expectation for all the cases listed in Table \ref{Table}. For the additive noise scenario, we additionally derive pathwise estimates.  
 
 It is worth to mention that the results presented here are obtained under the assumption of a linear growth condition on the noise coefficient $\Grm$ and for all $\varpi\in(1,\infty)$. Our approach follows the methodology developed in \cite{BFLZ_2025_Arxiv}, where similar analytical tools were employed, particularly those previously used in the study of invariant measures (see, e.g., \cite{GHMR17,KS,BZ_DCDS,FZ25}). We also emphasize that for the two-dimensional case with $\varpi = 1$, results similar to those in \cite{BFLZ_2025_Arxiv} can be obtained; therefore, this particular case is not discussed further in the present work.

 Our main results read as follows. Under the linear growth condition on $\Grm$ and suitable conditions on the observational resolution $\h$ and the nudging parameter $\sigma$, we prove that 
 \begin{enumerate}
 	\item [(i).] for $d=2$ and $1<\varpi<3$ with additive noise (see Theorem \ref{MT-Subcritical} (1))
 	\begin{align*}
 		 \mathbb{E} \left[\|\u(t) - \Urm(t)\|_2^2 \right]\;\to\; 0
 		\quad \text{exponentially fast as } t \to +\infty.
 	\end{align*} 
 \vskip 2mm
 \item  [(ii).]  for $d=2$ and $1<\varpi<3$ with multiplicative noise (see Theorem \ref{MT-Subcritical} (2))
 \begin{align*}
 	  \mathbb{E} \left[\|\u(t) - \Urm(t)\|_2^2 \right] \;\to\; 0
 	\quad \text{$p$-polynomially fast for any } p \in (0,+\infty)
 	\text{ as } t \to +\infty.
 \end{align*} 
\vskip 2mm
\item [(iii).] for $d=2$ and $\varpi=3$ with additive as well as multiplicative noise (see Theorem \ref{MT-Critical} (2))
\begin{align*}
	 \mathbb{E} \left[\|\u(t) - \Urm(t)\|_2^2 \right] \;\to\; 0
	\quad \text{exponentially fast as } t \to +\infty.
\end{align*} 
\vskip 2mm
\item [(iv).] for $d\in\{2,3\}$, $\varpi\geq3$ and $2\beta\mu>1$ with additive as well as multiplicative noise (see Theorems \ref{MT-Critical} (1) and \ref{MT-Supercritical} (2))
\begin{align*}
	 \mathbb{E} \left[\|\u(t) - \Urm(t)\|_2^2 \right] \;\to\; 0
	\quad \text{exponentially fast as } t \to +\infty.
\end{align*} 
\vskip 2mm
\item [(v).] for $d\in\{2,3\}$ and $\varpi>3$ with additive as well as multiplicative noise (see Theorem \ref{MT-Supercritical} (1))
\begin{align*}
	 \mathbb{E} \left[\|\u(t) - \Urm(t)\|_2^2 \right] \;\to\; 0
	\quad \text{exponentially fast as } t \to +\infty.
\end{align*} 
 \end{enumerate}
 
 \begin{remark}
 	We clarify that by exponential convergence, we mean that the difference is bounded by a term of the form $\widetilde{a} e^{-a t}$ for some constants $a>0$ and $\widetilde{a}>0$. Similarly, $p$-polynomial decay refers to a bound of the form $\frac{a}{t^p}$ in the limit, for a suitable constant $a>0$.
 \end{remark}

As we emphasized previously that, we obtain better convergence results than the 2D SNSEs discussed in the work \cite{BFLZ_2025_Arxiv}. The SCBFEs \eqref{1}, also known as SNSEs with damping, exhibit enhanced convergence properties due to the presence of nonlinear damping term. In particular, we show that nonlinear damping not only enables the implementation of CDA in three dimensions but also yields improved convergence results in two dimensions when compared to the classical SNSEs, see the comparison in Table \ref{Table2}. Furthermore, we note that under a suitable sublinear growth condition on the noise coefficient $\Grm$, one can also obtain exponential convergence $\mathbb{E} \left[\|\u(t) - \Urm(t)\|_2^2 \right] \;\to\; 0$ as $t \to +\infty$ even in the case of subcritical SCBFEs (see Remark \ref{Rem-Subcritical} below).

\begin{table}
	\renewcommand{\arraystretch}{1.2}
	\begin{tabular}{|p{3cm}|p{3cm}|p{3cm}|p{3cm}|p{3cm}|}
		\hline
		\textbf{Assumption on the noise coefficient $\Grm$}& 2D SNSEs \cite{BFLZ_2025_Arxiv}  & 	2D Subcritical SCBFEs with $\varpi\neq1$ &  2D and 3D  Critical SCBFEs & 2D and 3D Supercritical SCBFEs   \\
		\hline
		$\Grm$ is independent of $\u$ & exponentially fast & exponentially fast & exponentially fast & exponentially  fast\\
		\hline
		$\Grm$ is bounded & exponentially fast &exponentially  fast& exponentially  fast& exponentially  fast\\
		\hline
		$\Grm$ has sublinear growth & $p$-polynomially fast for any  $p \in (0,+\infty)$ & $p$-polynomially fast for any  $p \in (0,+\infty)$  & exponentially  fast& exponentially  fast\\
		\hline
		$\Grm$ has linear growth & $p$-polynomially fast for any  $p \in (0,\frac{\mu\lambda_1}{4 \widetilde{K}}-\frac12)$ &  $p$-polynomially fast for any  $p \in (0,+\infty)$ &  exponentially  fast& exponentially  fast \\
		\hline
	\end{tabular}
	\vskip 0.1 cm 
	\caption{Comparison of rate of convergence of $ \mathbb{E} \left[\|\u(t) - \Urm(t)\|_2^2 \right] \;\to\; 0$ for SNSEs and SCBFEs}
	\label{Table2}
\end{table}

\subsection{Outline of the Manuscript}
The paper is organized as follows. Section~\ref{sec:setting} presents the necessary preliminary material. In Section~\ref{Sec3}, we discuss the well-posedness of system \eqref{SCBFE} and the associated data assimilation system. Section~\ref{Sec4} is devoted to deriving energy estimates for the solutions of system \eqref{SCBFE} and its data assimilation counterpart, both in expectation and in probability. The continuous data assimilation (CDA) algorithm for 2D subcritical SCBFEs is analyzed in Section~\ref{Sec5} (Theorem \ref{MT-Subcritical}), while Section~\ref{Sec6} addresses the CDA algorithm for 2D and 3D critical SCBFEs (Theorem \ref{MT-Critical}). Section~\ref{Sec7} focuses on the CDA algorithm for 2D and 3D supercritical SCBFEs (Theorem \ref{MT-Supercritical}). Finally, the last section presents a pathwise analysis of the CDA algorithm in the case of additive noise (Theorems \ref{pathwise_data_ass} and \ref{pathwise_data_ass-geq3}).

\section{Mathematical setting}\label{sec:setting}
In this section, we collect the notation, functional spaces, operators, and hypotheses that will be employed in the subsequent analysis.

For any Banach space $E$, we denote its topological dual by $E^*$.   The duality pairing between $E$ and $E^*$ is denoted by $\langle \cdot , \cdot \rangle_{E,E^*}$. When no ambiguity arises, the subscripts will be omitted. For any real Hilbert space $\Hcal$, we denote by $\|\cdot\|_{\Hcal}$ the associated norm and by $(\cdot,\cdot)_{\Hcal}$ the inner product.

\subsection{Functional spaces}
 
Let $\C_0^{\infty}(\Ocal;\R^d)$ denote the space of all infinitely differentiable, $\R^d$-valued functions with compact support in $\Ocal\subset\R^d$. We define
\begin{align*}
	\mathcal{V} &:= \{\u \in \C_0^{\infty}(\Ocal;\R^d) : \nabla \cdot \u = 0\},  &&	\Hbb :=  \overline{\mathcal{V}}^{\Lbb^2(\Ocal) = \mathrm{L}^2(\Ocal;\R^d)},\\
	\Vbb &:= \overline{\mathcal{V}}^{\Hbb^1(\Ocal) = \mathrm{H}^1(\Ocal;\R^d)},
	&& \widetilde{\Lbb}^{p} := \overline{\mathcal{V}}^{\Lbb^p(\Ocal) = \mathrm{L}^p(\Ocal;\R^d)}, \quad p \in (1,\infty), \quad p\neq 2.
\end{align*}

The space $\Hbb$ is equipped with the Hilbert structure induced by $\Lbb^2(\Ocal)$, and its associated norm and inner product are denoted by $\|\cdot\|_2$ and $(\cdot,\cdot)$, respectively.   Owing to the Poincar\'e inequality on the bounded domain $\Ocal$, the space $\Vbb$ can be endowed with the norm $\|\u\|_{\Vbb} := \|\nabla \u\|_2,$ for  $\u \in \Vbb$. The space $\wi\Lbb^p$, $p\in (1,\infty)$ with $p\neq 2$, is equipped with the Banach space structure induced by $\Lbb^p(\Ocal)$, and its associated norm is denoted by $\|\cdot\|_p$.

Following \cite[Subsection~2.1]{RFHK}, for $\frac{1}{p}+\frac{1}{p'}=1$, the sum space $\Vbb^* + \widetilde{\Lbb}^{p'}$ is well defined and forms a Banach space equipped with the norm
\begin{align*}
	\|\u\|_{\widetilde{\Lbb}^{p'}+\Vbb^* } 
	&:= \inf \left\{ \|\u_1\|_{\Vbb^*} + \|\u_2\|_{p'} : 
	\u = \u_1 + \u_2, \; \u_1 \in \Vbb^*, \; \u_2 \in \widetilde{\Lbb}^{p'} \right\} \nonumber\\
	&= \sup \left\{ \frac{|\langle \v ,  \u\rangle|}{
		\|\v\|_{\widetilde{\Lbb}^p \cap \Vbb}} : 
	\boldsymbol{0} \neq \v \in \widetilde{\Lbb}^p \cap \Vbb \right\},
\end{align*}
where $\|\cdot\|_{\widetilde{\Lbb}^p \cap \Vbb} := 
\max\left\{ \|\cdot\|_{p}, \, \|\cdot\|_{\Vbb} \right\}$ defines a norm on the Banach space $\widetilde{\Lbb}^p \cap \Vbb$.  
Moreover, the norm $\max\{\|\u\|_{p}, \|\u\|_{\Vbb}\}$ is equivalent to both 
$\|\u\|_{p} + \|\u\|_{\Vbb}$ and 
$\sqrt{\|\u\|_{p}^2 + \|\u\|_{\Vbb}^2}$ on $\widetilde{\Lbb}^p \cap \Vbb$.

Finally, the following continuous embeddings hold:
\[
\widetilde{\Lbb}^p \cap \Vbb \hookrightarrow \Vbb \hookrightarrow \Hbb \cong \Hbb^* 
\hookrightarrow \Vbb^* \hookrightarrow \widetilde{\Lbb}^{p'} + \Vbb',
\]
with $\Vbb \hookrightarrow \Hbb $ as compact embedding. For a detailed functional framework, we refer the reader to \cite{GKM_AMOP,Temam_1984}, among others.

\subsection{The Helmholtz-Hodge projection}
Let $\mathcal{P}_p : \Lbb^2(\Ocal)\cap \Lbb^p(\Ocal) \to \Hbb\cap\widetilde{\Lbb}^p$ be a bounded linear projection (see \cite{Farwig+Kozono+Sohr_2007,DFHM,HKTY}) such that, for $2\leq p<\infty$, the adjoint map  $(\mathcal{P}_p)^{*}=\mathcal{P}_{p'}$, where $\frac{1}{p}+\frac{1}{p'}=1$ and 
\begin{align*}
	\mathcal{P}_{p'} : \Lbb^2(\Ocal)+ \Lbb^{p'}(\Ocal) \to \Hbb+\widetilde{\Lbb}^{p'}.
\end{align*}
\begin{remark}
	For, $1\leq \varpi <\infty$, the projection $\mathcal{P}_{\frac{\varpi+1}{\varpi}}$ has a continuous linear extension (keeping the same notation) ${\mathcal{P}}_{\frac{\varpi+1}{\varpi}}: \Hbb^{-1} (\Ocal) + \Lbb^{\frac{\varpi+1}{\varpi}} \to \Vbb^* + \wi\Lbb^{\frac{\varpi+1}{\varpi}}$, see e.g. \cite[Proposition 3.1]{Kunstmann_2010}.
\end{remark}

\subsection{Linear operator}\label{opeA}
Let us  introduce  the linear operator defined by 
\begin{equation*}
	\Acal\u:=-\mathcal{P}_{\frac{\varpi+1}{\varpi}}\Delta\u,\ \u\in\Vbb\cap\wi\Lbb^{\varpi+1}.
\end{equation*}
Remember that the operator $\Acal$ is a non-negative operator in $\Hbb$ and 
\begin{align*}
	\left<\u, \Acal \u\right>=\|\nabla\u\|_{2}^2,\ \textrm{ for all }\ \u\in\Vbb\cap\wi\Lbb^{\varpi+1}, \ \text{ so that }\ \|\Acal\u\|_{\Vbb^{*}+\wi\Lbb^{\frac{\varpi+1}{\varpi}}} \leq \|\u\|_{\Vbb}.
\end{align*}
\begin{remark}
	The Stokes operator $\Arm\u:= -\mathcal{P}_{2}\Delta\u$ is a linear operator in $\Hbb$ with domain $\Drm(\Arm)=:\Hbb^2(\Ocal)\cap\Vbb$. It can be extended (and let us not change the notation) as a linear operator $\Arm:\Vbb\to \Vbb^*$ by
	\[
	\langle \u , \Arm\v \rangle:=
	(\nabla \u, \nabla\v)\,,  \text{ for all }  \u,\v\in \Vbb.
	\]
	For the bounded domain $\Ocal$, the operator $\Arm$ is invertible and its inverse $\Arm^{-1}$ is bounded, self-adjoint and compact in $\Hbb$. Thus, using spectral theorem, the spectrum of $\Arm$ consists of an infinite sequence $0< \lambda_1\leq \lambda_2\leq\ldots\leq \lambda_k\leq \ldots,$ with $\lambda_k\to\infty$ as $k\to\infty$ of eigenvalues. Moreover, there exists an orthonormal basis $\{e_k\}_{k=1}^{\infty} $ of $\Hbb$ consisting of eigenfunctions of $\Arm$ such that $\Arm e_k =\lambda_ke_k$,  for all $ k\in\mathbb{N}$. In addition, we have 
		\begin{align}\label{poin}
	\lambda_1\|\u\|_{2}^2 \leq 	\|\u\|_{\Vbb}^2.
	\end{align}
\end{remark}

\subsection{Bilinear operator}
Let us define the \emph{trilinear form} $b(\cdot,\cdot,\cdot):\Vbb\cap\wi\Lbb^{\varpi+1}\times\Vbb\cap\wi\Lbb^{\varpi+1}\times\Vbb\cap\wi\Lbb^{\varpi+1}\to\R$ by $$b(\u,\v,\w)=\int_{\Ocal}(\u(x)\cdot\nabla)\v(x)\cdot\w(x)\drm x = \sum_{i,j=1}^d\int_{\Ocal}\u_i(x)\frac{\partial \v_j(x)}{\partial x_i}\w_j(x)\d x.$$ If $\u, \v$ are such that the linear map $b(\u, \v, \cdot) $ is continuous on $\Vbb\cap\wi\Lbb^{\varpi+1}$, the corresponding element of $\Vbb^*+\wi\Lbb^{\frac{\varpi+1}{\varpi}}$ is denoted by $\B(\u, \v)$. We also denote  $\B(\u, \u)= \mathcal{P}_{\frac{\varpi+1}{\varpi}} [(\u\cdot\nabla)\u]$.
An integration by parts yields  
\begin{equation}\label{b0}
	\left\{
	\begin{aligned}
		b(\u,\v,\v) &= 0, && \text{ for all }\ \u,\v \in\Vbb,\\
		b(\u,\v,\w) &=  -b(\u,\w,\v), && \text{ for all }\ \u,\v,\w\in \Vbb.
	\end{aligned}
	\right.\end{equation}

   \begin{remark}
   	 1).  For $\varpi>3$, using H\"older's and Young's inequalities, we estimate $|b(\u,\v,\w)|$  as  (see \cite[Equation (5.21)]{KK+FC+MTM-SCBF})
   	\begin{align}\label{bilinear-etimate-1}
   		|b(\u,\v,\w)|  & \leq \|\v\|_{\Vbb}\||\w|\u\|_2   \leq \frac{\mu}{2} \|\nabla\v\|^2_2 + \frac{1}{2\mu} \||\w|\u\|_2^2
   		\nonumber  \\
   		& \leq \frac{\mu}{2} \|\v\|^2_{\Vbb} + \frac{\beta}{4}  \||\w|^{\frac{\varpi -1}{2}}\u\|_2^2  +  \widehat{\boldsymbol{\upvarpi}}  \|\u\|_2^2.
   	\end{align}
   	where 
   	\begin{align}\label{eqn-upvarpi}
   		\widehat{\boldsymbol{\upvarpi}} =	\frac{\varpi-3}{2\mu(\varpi-1)}\left(\frac{4}{\mu\beta (\varpi-1)}\right)^{\frac{2}{\varpi-3}}.
   	\end{align}

   	2). Also, as in \cite{Zhou_2012}, for $r> 3$, one can estimate $|b(\u,\v,\w)|$ as (see also \cite[Remark 2.7]{GM_2025})
   	\begin{align}\label{bilinear-etimate-2}
   		|b(\u,\v,\w)|  & \leq  \frac{1}{2\beta} \|\v\|_{\Vbb}^2+\frac{\beta}{2} \||\w|^{\frac{\varpi-1}{2}}\u\|_{2}^2 + \frac{\beta}{2}\|\u\|_{2}^2.
   	\end{align}
   	
   \end{remark}

    \subsection{Nonlinear operator}
We introduce the  nonlinear operator defined by
\begin{equation*}
	\K(\u):= \Pcal_{\frac{\varpi+1}{\varpi}}[|\u|^{\varpi-1}\u],\ \u\in\Vbb\cap\wi\Lbb^{\varpi+1}.
\end{equation*}
It can be easily seen that $\langle\u, \K (\u)\rangle = \|\u\|_{\widetilde{\Lbb}^{\varpi+1}}^{\varpi+1}$. An application of the Mean Value Theorem and the H\"older inequality provides the estimate (see \cite[Subsection 2.4, p. 8]{GM_2025})
\begin{align*}
	&\langle\w, \K(\u) - \K(\v)\rangle \leq \varpi\left(\|\u\|_{\widetilde{\Lbb}^{\varpi+1}}+\|\v\|_{\widetilde{\Lbb}^{\varpi+1}}\right)^{\varpi-1}\|\u-\v\|_{\widetilde{\Lbb}^{\varpi+1}}\|\w\|_{\widetilde{\Lbb}^{\varpi+1}},
\end{align*}
for all $\u,\v, \w\in\Vbb\cap\widetilde{\Lbb}^{\varpi+1}$. 
Thus the operator $\K(\cdot):\Vbb\cap\widetilde{\Lbb}^{\varpi+1}\to \Vbb^{*}+\widetilde{\Lbb}^{\frac{\varpi+1}{\varpi}}$ is locally Lipschitz. Furthermore, for any $\varpi\in[1,\infty)$, we have (see \cite{KK+MTM-SCBF})
\begin{align}\label{2.23}
	&\langle\u-\v, \K(\u)-\K(\v)\rangle\geq \frac{1}{2}\||\u|^{\frac{\varpi-1}{2}}(\u-\v)\|_{2}^2+\frac{1}{2}\||\v|^{\frac{\varpi-1}{2}}(\u-\v)\|_{2}^2 \geq \frac{1}{2^{\varpi-1}}\|\u-\v\|_{\wi\Lbb^{\varpi+1}}^{\varpi+1}\geq 0,
\end{align}
for $\varpi\geq 1$ 	and all $\u,\v\in\Vbb\cap\wi\Lbb^{\varpi+1}$.

\subsection{Stochastic setting}\label{Sto-set}
Let $(\Omega,\mathscr{F},\mathbb{P})$ be a complete probability space endowed with an increasing filtration $\{\mathscr{F}_t\}_{t\geq 0}$ of sub-$\sigma$-algebras of $\mathscr{F}$ satisfying the usual conditions.  

\subsubsection{$\Qrm$-Wiener process} 
We begin by recalling the definition and some properties of $\Qrm$-Wiener processes. Let $\Hcal$ be a separable Hilbert space.

\begin{definition}
	A stochastic process $\{\Wrm(t)\}_{t\geq 0}$ is called an \emph{$\Hcal$-valued, $\mathscr{F}_t$-adapted $\Qrm$-Wiener process} with covariance operator $\Qrm$ if:
	\begin{enumerate}
		\item[(i)] for every non-zero $\v\in \Hcal$, the process $\|\Qrm^{1/2}\v\|_{\Hcal}^{-1}(\Wrm(t),\v)$ is a standard one-dimensional Wiener process;
		\item[(ii)] for any $\v\in \Hcal$, the real-valued process $(\Wrm(\cdot),\v)$ is an $\mathscr{F}_t$-adapted martingale.
	\end{enumerate}
\end{definition}

The process $\{\Wrm(t)\}_{t\ge 0}$ is a $\Qrm$-Wiener process if and only if, for each $t$, one can represent the random field $\Wrm(\cdot)$ as 
$$\Wrm(\cdot,x) = \sum_{k=1}^{\infty} \sqrt{\mu_k}\,\boldsymbol{q}_k(x)\beta_k(\cdot),$$
where $\{\beta_k(\cdot)\}_{k\in\mathbb{N}}$ are independent one-dimensional Brownian motions on $(\Omega,\mathscr{F},\mathbb{P})$, and $\{\boldsymbol{q}_k\}_{k=1}^{\infty}$ is an orthonormal basis of $\Hcal$ satisfying $\Qrm\boldsymbol{q}_k = \mu_k \boldsymbol{q}_k$. If, in addition, $\Tr\Qrm = \sum_{k=1}^{\infty}\mu_k < \infty$, then $\Wrm(\cdot)$ is a Gaussian process in $\Hcal$ with 
$$\Ebb[\Wrm(t)] = 0, \qquad \mathrm{Cov}[\Wrm(t)] = t\Qrm,\qquad t\ge 0.$$

Define $\Hcal_0 := \Qrm^{1/2}\Hcal$, which becomes a Hilbert space when equipped with the inner product
$$(\u,\v)_0 = \sum_{k=1}^{\infty}\frac{1}{\mu_k}(\u,\boldsymbol{q}_k)(\v,\boldsymbol{q}_k)
= (\Qrm^{-1/2}\u,\Qrm^{-1/2}\v), \qquad \u,\v\in \Hcal_0,$$
where $\Qrm^{-1/2}$ denotes the pseudo-inverse of $\Qrm^{1/2}$.

Let $\mathcal{L}(\Hcal)$ be the space of bounded linear operators on $\Hcal$, and let $\mathcal{L}_{\Qrm}:=\mathcal{L}_{\Qrm}(\Hcal)$ denote the space of Hilbert-Schmidt operators from $\Hcal_0$ into $\Hcal$. Since $\Qrm$ is trace-class, the embedding $\Hcal_0 \hookrightarrow \Hcal$ is Hilbert–Schmidt, and $\mathcal{L}_{\Qrm}$ is itself a Hilbert space with norm
$$\|\Phi\|_{\mathcal{L}_{\Qrm}}^2 = \Tr(\Phi\Qrm\Phi^*) 
= \sum_{k=1}^{\infty} \|\Qrm^{1/2}\Phi^*\boldsymbol{q}_k\|_{\Hcal}^2,$$
and inner product
$$(\Phi,\Psi)_{\mathcal{L}_{\Qrm}} 
= \Tr(\Phi\Qrm\Psi^*) 
= \sum_{k=1}^{\infty} \bigl(\Qrm^{1/2}\Psi^*\boldsymbol{q}_k,\Qrm^{1/2}\Phi^*\boldsymbol{q}_k\bigr).$$
Further background can be found in \cite{DaZ}.

We now state the assumptions imposed on the noise coefficient $\Grm$.

\begin{hypothesis}\label{hyp-noise}
	Assume that $\{\Wrm(t)\}_{t\ge 0}$ is an $\Hbb$-valued $\Qrm$-Wiener process defined on the stochastic basis $(\Omega,\mathscr{F},\{\mathscr{F}_t\}_{t\geq 0},\mathbb{P})$. The coefficient $\Grm(\cdot,\cdot)$ satisfies:
	\begin{itemize}
		\item[(H.1)] $\Grm \in \C([0,T]\times (\Vbb\cap\wi\Lbb^{\varpi+1});\,\mathcal{L}_{\Qrm}(\Hbb))$;
		
		\item[(H.2)] \textbf{(Linear growth condition)}  
		There exist two constants $K, \widetilde{K} > 0$ such that for all $t\in [0,T]$ and $\u\in\Hbb$,
\begin{align*}
	\|\Grm(t,\u)\|_{\mathcal{L}_{\Qrm}}^2 	\leq K+\widetilde{K}\|\u\|_{2}^2;
\end{align*}
		
		\item[(H.3)] \textbf{(Lipschitz condition)}  
		There exists $L\geq 0$ such that for any $t\in[0,T]$ and $\u_1,\u_2\in\Hbb$,
\begin{align*}
		\|\Grm(t,\u_1) - \Grm(t,\u_2)\|_{\mathcal{L}_{\Qrm}}^2 \leq L\,\|\u_1 - \u_2\|_{2}^2.
\end{align*}
	\end{itemize}
\end{hypothesis}

\subsection{Interpolant operators}\label{sec-inter}
In the absence of knowledge of the initial velocity $\u_0$, the data assimilation framework for the 2D and 3D stochastic convective Brinkman-Forchheimer equations incorporates a linear interpolant operator $\Rrm_{\h} : \mathbb{H}^1(\Ocal) \to \mathbb{L}^2(\Ocal),$ which interpolates a given function at a spatial resolution of length scale $\h>0$ and satisfies the following approximation property:
 \begin{equation}\label{eqn-data-inter}
\left\| \u -  \Rrm_{\h}(\u) \right\|_{2}^2 \leq c_0 \h^2 \left\|\u \right\|_{\mathbb{H}^1}^2,
\end{equation}
 for every $\u \in \mathbb{H}^1(\Ocal).$ 
\begin{example}
	1). An example of an interpolant which is physically relevant and satisfies \eqref{eqn-data-inter} is given by finite volume elements  (see \cite{azouani2014continuous} and the therein references). 
	Specifically, let $\h > 0$ be given, and let 
	\[
	\Ocal = \bigcup_{j=1}^{N_{\h}} \Ocal_j,
	\]
	where the $\Ocal_j$ are disjoint subsets such that $\operatorname{diam}(\Ocal_j) \leq \h$ for 
	$j = 1, 2, \ldots, N_{\h}$. Then we set
	\[
	\Rrm_{\h}(\u)(x) = \sum_{j=1}^{N_{\h}} \overline{\u}_j\,\pmb{1}_{\Ocal_j}(x),
	\]
	where 
	\[
	\overline{\u}_j=\frac{1}{|\Ocal_j|}\int_{\Ocal_j} \u (x) \drm x.
	\]
	
	2). The orthogonal projection onto the Fourier modes, with wave numbers $k$ such that $|k| \leq \frac{1}{\h},$ is an another example of an interpolant operator which satisfies  the approximation property \eqref{eqn-data-inter}.
\end{example}

\section{Underlying system and data assimilation system}\label{Sec3}
In this section, we discuss the solvability of the underlying system and introduce the data assimilation system, along with a discussion of its solvability. Taking the projection $\Pcal_{\frac{\varpi+1}{\varpi}}$ of system \eqref{1}, we write
\begin{equation}\label{SCBFE}
	\left\{
	\begin{aligned}
		\drm \u + \mu \Acal\u \drm t + \B(\u,\u) \drm t + \alpha \u \drm t + \beta\K(\u) \drm t  & =  \f \drm t + \Grm(t,\u)\drm\Wrm, &&t>0, \\ 
		\u(0)&=\u_0. && 
	\end{aligned}
	\right.
\end{equation} 
In what follows, we recall the solvability result for system \eqref{SCBFE} established in \cite{KK+FC+MTM-SCBF}.
\begin{definition}\label{def-StrongSolution}
	We say that the stochastic system \eqref{SCBFE} admits a \emph{strong solution} (in the probabilistic sense) if, for every stochastic basis
\begin{align*}
	(\Omega,\mathscr{F},\{\mathscr{F}_t\}_{t\geq 0},\Pbb)
\end{align*}
	and every $\Qrm$-Wiener process $\{\Wrm(t)\}_{t\geq 0}$ defined on this basis, there exists a progressively measurable process
\begin{align*}
		\u : [0,+\infty)\times\Omega \to \Hbb
\end{align*}
	such that, $\Pbb$-a.s.,
\begin{align*}
		\u(\cdot,\omega)\in \C([0,+\infty);\Hbb_w)\cap \Lrm_{\mathrm{loc}}^2(0,+\infty;\Vbb)\cap \Lrm_{\mathrm{loc}}^{\varpi+1}(0,+\infty;\wi\Lbb^{\varpi+1}),
\end{align*}
	and for all $t\in[0,T]$ and all $\v\in\Vbb\cap\wi\Lbb^{\varpi+1}$, the following identity holds $\Pbb$-a.s.:
	\begin{align*}
	& (\u(t),\v) 
	\nonumber\\ & =(\u_0,\v)-\int_0^t\langle\v , \; \mu \Acal\u(s)+\B(\u(s),\u(s)) + \alpha \u(s) +\beta\K(\u(s)) - \f\rangle\d s 
	  + \int_0^t\left(\v , \; \Grm(s,\u(s))\d\W(s)\right).
\end{align*}
\end{definition}

 \begin{theorem}[{\cite[Theorem 3.10]{KK+FC+MTM-SCBF}}]\label{thm-StrongSolution}
	Under  Hypothesis \ref{hyp-noise}, let $\f \in \Vbb^*$ and $\u_0 \in \Hbb$. 
	Then, for all the cases given in Table \ref{Table}, the system \eqref{SCBFE} admits a unique strong solution $\u$ in the sense of Definition \ref{def-StrongSolution} with $\Pbb$-a.s. paths in 
	\begin{equation*}
		\C([0,+\infty), \Hbb) \cap \Lrm_{\mathrm{loc}}^2(0,+\infty;\Vbb)\cap \Lrm_{\mathrm{loc}}^{\varpi+1}(0,+\infty;\wi\Lbb^{\varpi+1}).
	\end{equation*}
\end{theorem}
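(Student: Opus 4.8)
The plan is to construct the solution by a Faedo--Galerkin discretization combined with the stochastic compactness method, producing first a martingale (weak) solution, and then to promote it to a pathwise strong solution via pathwise uniqueness and the Yamada--Watanabe principle. First I would fix the orthonormal basis $\{e_k\}$ of $\Hbb$ of eigenfunctions of the Stokes operator $\Arm$ and let $\Pbb_n$ be the orthogonal projection onto $\mathrm{span}\{e_1,\dots,e_n\}$. Projecting \eqref{SCBFE} yields a finite-dimensional system of It\^o SDEs for $\u_n=\Pbb_n\u$ whose drift coefficients $\B$ and $\K$ are locally Lipschitz and whose diffusion $\Grm$ is globally Lipschitz by (H.3); local existence and uniqueness of $\u_n$ is then classical, and global existence on $[0,T]$ follows once uniform-in-$n$ a priori bounds rule out explosion.

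Next I would derive those bounds. Applying the finite-dimensional It\^o formula to $\|\u_n\|_2^2$ and invoking the cancellation $b(\u_n,\u_n,\u_n)=0$ together with $\langle\u_n,\K(\u_n)\rangle=\|\u_n\|_{\wi\Lbb^{\varpi+1}}^{\varpi+1}$ gives
\begin{align*}
\drm\|\u_n\|_2^2 + 2\mu\|\u_n\|_{\Vbb}^2\,\drm t + 2\alpha\|\u_n\|_2^2\,\drm t + 2\beta\|\u_n\|_{\wi\Lbb^{\varpi+1}}^{\varpi+1}\,\drm t
&= 2\langle\f,\u_n\rangle\,\drm t + \|\Grm(t,\u_n)\|_{\mathcal{L}_{\Qrm}}^2\,\drm t \\
&\quad + 2(\u_n,\Grm(t,\u_n)\,\drm\Wrm).
\end{align*}
The linear growth bound (H.2) controls the It\^o correction, so that after taking expectations and applying Gronwall's inequality I obtain uniform bounds for $\u_n$ in $\Lrm^2(\Omega;\Lrm^\infty(0,T;\Hbb))\cap\Lrm^2(\Omega;\Lrm^2(0,T;\Vbb))\cap\Lrm^{\varpi+1}(\Omega\times(0,T);\wi\Lbb^{\varpi+1})$; applying It\^o to $\|\u_n\|_2^{2p}$ and using the Burkholder--Davis--Gundy inequality upgrades these to higher moments. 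A companion bound on the fractional-in-time regularity of $\u_n$, obtained by estimating each drift term in the dual norm of $\Vbb^*+\wi\Lbb^{\frac{\varpi+1}{\varpi}}$ and the stochastic integral in a Sobolev--Slobodeckij space, supplies the time compactness needed below.

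With these estimates I would establish tightness of the laws of $\{\u_n\}$ on a path space combining $\C([0,T];\Hbb_w)$, the strong topology of $\Lrm^2(0,T;\Hbb)$ (via an Aubin--Lions--Simon argument using the compact embedding $\Vbb\hookrightarrow\Hbb$), and the weak topologies of $\Lrm^2(0,T;\Vbb)$ and $\Lrm^{\varpi+1}(0,T;\wi\Lbb^{\varpi+1})$. Since several of these topologies are non-metrizable, I would invoke Jakubowski's extension of the Skorokhod representation theorem to obtain, on a new stochastic basis, a.s.-convergent copies $\wi\u_n\to\wi\u$ along with copies of the driving noise. The linear terms pass to the limit directly; the convective term $\B$ is handled using the strong $\Lrm^2(0,T;\Hbb)$ convergence, and the damping term $\K$ is identified through a Minty--Browder monotonicity argument built on \eqref{2.23}, showing that the weak limit of $\K(\wi\u_n)$ coincides with $\K(\wi\u)$. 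This yields a martingale solution, which the martingale representation theorem realizes as a solution driven by a genuine $\Qrm$-Wiener process.

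Finally I would prove pathwise uniqueness, which by Yamada--Watanabe promotes this to the unique probabilistic strong solution claimed. For two solutions $\u,\v$ with difference $\w=\u-\v$, It\^o's formula for $\|\w\|_2^2$ produces the convective defect $b(\w,\v,\w)=b(\w,\w,\v)$, the coercive damping gain from \eqref{2.23}, and the noise term controlled by the Lipschitz bound (H.3). The crux, and the main obstacle, is absorbing $b(\w,\w,\v)$ into the viscous and damping dissipation. In the plane for $\varpi\ge1$ this is routine, and for $\varpi>3$ the estimate \eqref{bilinear-etimate-1} lets the superlinear damping dominate in either dimension, leaving only a $\widehat{\boldsymbol{\upvarpi}}\|\w\|_2^2$ term for Gronwall. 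The genuinely delicate regime is the critical exponent $\varpi=3$, where the defect contributes a term of the form $\||\v|\w\|_2^2=\||\v|^{\frac{\varpi-1}{2}}\w\|_2^2$ that must be balanced against the damping gain $\tfrac{\beta}{2}\||\v|^{\frac{\varpi-1}{2}}\w\|_2^2$ from the monotonicity of $\K$; this absorption closes precisely under the balance condition $2\mu\beta\ge1$ recorded for the critical case in Table~\ref{Table}. A Gronwall argument then gives $\Ebb\|\w(t)\|_2^2=0$, yielding pathwise uniqueness and the asserted regularity.
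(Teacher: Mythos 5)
This theorem is imported from \cite[Theorem 3.10]{KK+FC+MTM-SCBF}, and the paper itself offers no proof beyond describing the strategy of that reference (Faedo--Galerkin approximation, stochastic compactness, Jakubowski's extension of the Skorokhod theorem, the martingale representation theorem, and monotonicity/Minty--Browder arguments for the nonlinear terms). Your proposal follows essentially that same route, including the correct identification of where the balance condition $2\mu\beta\geq 1$ enters in the critical case $\varpi=3$, so it is consistent with the cited proof.
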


We now introduce the data assimilation equation. Let $\u$ denote the solution of the stochastic convective Brinkman-Forchheimer equations \eqref{SCBFE}, and define the process $\Urm$ as the solution of the following equation, which we refer to as the data assimilation equation:
\begin{equation}\label{SCBFE-CDA}
	\left\{
	\begin{aligned}
		\drm \Urm + \mu \Acal\Urm \drm t + \B(\Urm,\Urm) \drm t + \alpha \Urm \drm t + \beta\K(\Urm) \drm t  & =  \f \drm t + \Grm(t,\Urm)\drm\Wrm - \sigma \Pcal_{\frac{\varpi+1}{\varpi}} \Rrm_{\h}\left(\Urm-\u\right)  \drm t , &&t>0, \\ 
		\Urm(0)&=\Urm_0, && 
	\end{aligned}
	\right.
\end{equation} 
where $\u$ is the solution of \eqref{SCBFE} in the sense of Definition \ref{def-StrongSolution} and $\Rrm_{\h}$ defined in Subsection \ref{sec-inter} which satisfies \eqref{eqn-data-inter}.

We observe that, if $\Rrm_{\h}$ satisfies condition \eqref{eqn-data-inter}, then stochastic system \eqref{SCBFE-CDA} admits a unique strong solution (in the probabilistic sense) with the same regularity as the solution of system \eqref{SCBFE}. Indeed, the additional nudging term $\sigma \Pcal_{\frac{\varpi+1}{\varpi}} \Rrm_{\h}(\Urm-\u)$ introduced by the continuous data assimilation procedure can be controlled using the same a priori estimates as in the original system, and therefore does not affect the well-posedness analysis. Consequently, we state the following result:
\begin{theorem}
	Under  Hypothesis \ref{hyp-noise}, let $\f \in \Vbb^*$ and $\u_0 \in \Hbb$. 
	Then, for all the cases given in Table \ref{Table}, the system \eqref{SCBFE-CDA} admits a unique strong solution $\Urm$ with $\Pbb$-a.s. paths in 
	\begin{equation*}
		\C([0,+\infty), \Hbb) \cap \Lrm_{\mathrm{loc}}^2(0,+\infty;\Vbb)\cap \Lrm_{\mathrm{loc}}^{\varpi+1}(0,+\infty;\wi\Lbb^{\varpi+1}),
	\end{equation*}
and for all $t\in[0,T]$ and all $\v\in\Vbb\cap\wi\Lbb^{\varpi+1}$, the following identity holds $\Pbb$-a.s.:
\begin{align*}
	 (\Urm(t),\v) 
	& =(\Urm_0,\v)-\int_0^t\langle\v , \; \mu \Acal\Urm(s)+\B(\Urm(s),\Urm(s)) + \alpha \Urm(s) +\beta\K(\Urm(s)) - \f\rangle\d s 
	\nonumber\\ 
	& \quad  + \int_0^t\left(\v , \; \Grm(s,\Urm(s))\d\W(s)\right) 
	+ \sigma \int_0^t\left(\v , \Rrm_{\h}(\Urm(s)-\u(s))\right)\drm s.
\end{align*}
\end{theorem}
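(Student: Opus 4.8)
The plan is to treat the data assimilation system \eqref{SCBFE-CDA} as a perturbation of the underlying system \eqref{SCBFE} in which the true solution $\u$ — already furnished by Theorem \ref{thm-StrongSolution} — enters merely as a \emph{known} coefficient. Splitting the nudging term as
\[
\sigma \Pcal_{\frac{\varpi+1}{\varpi}} \Rrm_{\h}(\Urm - \u) = \sigma \Pcal_{\frac{\varpi+1}{\varpi}} \Rrm_{\h}\Urm - \sigma \Pcal_{\frac{\varpi+1}{\varpi}} \Rrm_{\h}\u,
\]
the unknown-dependent part $-\sigma\Pcal_{\frac{\varpi+1}{\varpi}}\Rrm_\h \Urm$ is a \emph{bounded linear} operator acting on $\Urm$ (of strictly lower order than $\Acal$), while $\sigma\Pcal_{\frac{\varpi+1}{\varpi}}\Rrm_\h\u$ is a prescribed source. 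Since $\u \in \Lrm^2_{\mathrm{loc}}(0,\infty;\Vbb)$ $\Pbb$-a.s.\ and $\Rrm_\h:\Hbb^1\to\Lbb^2$ is bounded by \eqref{eqn-data-inter}, this source lies in $\Lrm^2_{\mathrm{loc}}(0,\infty;\Hbb)$, hence is at least as regular as $\f\in\Vbb^*$. Consequently \eqref{SCBFE-CDA} falls within exactly the monotone-operator framework of \eqref{SCBFE}, and I would reproduce the existence-and-uniqueness argument of \cite{KK+FC+MTM-SCBF} with the modified (still coercive, still monotone) drift.

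Concretely, I would first establish the a priori estimates by applying It\^o's formula to $\|\Urm(t)\|_2^2$. The only new drift contribution is $-2\sigma(\Rrm_\h(\Urm-\u),\Urm)$. Writing $\Rrm_\h(\Urm-\u) = (\Urm-\u) - (\I-\Rrm_\h)(\Urm-\u)$ and using \eqref{eqn-data-inter},
\[
-2\sigma(\Rrm_\h(\Urm-\u),\Urm) \le -2\sigma\|\Urm\|_2^2 + 2\sigma(\u,\Urm) + 2\sigma\sqrt{c_0}\,\h\,\|\Urm-\u\|_{\Hbb^1}\|\Urm\|_2 .
\]
The leading term is dissipative, the cross term $2\sigma(\u,\Urm)$ is absorbed by Young's inequality (as $\u\in\Hbb$ is known), and the interpolation-error term is controlled by $\|\Urm-\u\|_{\Hbb^1}\lesssim\|\Urm\|_\Vbb + \|\u\|_\Vbb$ (Poincar\'e), absorbing the resulting $\|\Urm\|_\Vbb^2$ contribution into the viscous dissipation $\mu\|\Urm\|_\Vbb^2$. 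The terms $\B$, $\alpha\Urm$, $\beta\K$ are treated exactly as for \eqref{SCBFE} via \eqref{2.23}, and the stochastic term via the Burkholder--Davis--Gundy inequality together with the linear growth condition (H.2). Taking expectations and invoking Gronwall's lemma yields uniform bounds in $\Lrm^\infty(0,T;\Hbb)\cap\Lrm^2(0,T;\Vbb)\cap\Lrm^{\varpi+1}(0,T;\wi\Lbb^{\varpi+1})$, matching Theorem \ref{thm-StrongSolution}.

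With these estimates, existence of a martingale (and, by uniqueness, pathwise strong) solution follows from a Faedo--Galerkin approximation combined with the stochastic compactness method: tightness of the laws, Jakubowski's extension of the Skorokhod representation theorem, the martingale representation theorem, and a stochastic Minty--Browder argument to identify the nonlinear limits, precisely as in \cite{KK+FC+MTM-SCBF}. Because $\Rrm_\h\Urm$ and $\Rrm_\h\u$ are linear and bounded, passage to the limit in these terms is immediate and introduces no new compactness difficulty. For uniqueness I would apply It\^o's formula to $\|\Urm_1-\Urm_2\|_2^2$ for two solutions driven by the same noise: the source $\sigma\Pcal_{\frac{\varpi+1}{\varpi}}\Rrm_\h\u$ cancels, the surviving contribution $-\sigma(\Rrm_\h(\Urm_1-\Urm_2),\Urm_1-\Urm_2)$ is nonpositive up to an interpolation-error term absorbable into viscosity, the monotonicity inequality \eqref{2.23} disposes of $\K$, and the noise is handled by (H.3); Gronwall then gives pathwise uniqueness.

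The main obstacle — routine here — is ensuring the energy estimate closes in the presence of the interpolation-error cross term, i.e.\ that the gradient norm produced by $(\I-\Rrm_\h)(\Urm-\u)$ is absorbed by the viscous dissipation. Since this term is linear and strictly lower order than $\mu\Acal\Urm$, a single application of Young's inequality suffices, with \emph{no} smallness assumption on $\h$ or size condition on $\sigma$ needed at the well-posedness stage; those quantitative requirements enter only later, in the Foias--Prodi convergence estimates.
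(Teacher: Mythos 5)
Your proposal is correct and follows essentially the same route as the paper, which does not give a detailed proof but merely observes that the nudging term $\sigma\Pcal_{\frac{\varpi+1}{\varpi}}\Rrm_{\h}(\Urm-\u)$ is lower order, controllable by the same a priori estimates, and hence compatible with the Faedo--Galerkin/stochastic-compactness/Minty--Browder machinery of \cite{KK+FC+MTM-SCBF}. Your elaboration — splitting the nudging term into a dissipative linear part plus a known $\Lrm^2_{\mathrm{loc}}(0,\infty;\Hbb)$ source, absorbing the interpolation-error gradient contribution by Young's inequality without any smallness condition on $\h$ or $\sigma$ — correctly fills in the details the paper leaves implicit.
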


\section{Estimates in expectation and probability}\label{Sec4}
In this section, we develop a collection of estimates in expectation and probability that are fundamental to the derivation of the main results of this article.

\subsection{Estimates in expectation}
We first provide estimates in the expectation which will be useful in the sequel.

\begin{lemma}\label{lem-pth-moments}
Under Hypothesis \ref{hyp-noise}, for $p\geq 1$, $\varpi>1$, there exists a constant $\Mrm := \Mrm_{p,\mu,\beta,\varpi,K,\widetilde{K},|\Ocal|,\|\f\|_{\Vbb^{*}}}>0$ depends on $p$, $\mu$, $\beta$, $\varpi$, $K$, $\widetilde{K}$, $|\Ocal|$ (measure of the bounded domain $\Ocal$) and $\|\f\|_{\Vbb^{*}}$, and  independent of $t$ such that the solution $\u$ of \eqref{SCBFE} satisfies
	\begin{align}\label{eqn-moments-2p-u}
		& \sup_{t\geq 0}	\Ebb \left[\|\u(t)\|^{2p}_{2}\right] \leq \|\u_0\|^{2p}_{2} + \frac{\Mrm}{p (\mu\lambda_1+2\alpha)}.
	\end{align}
\end{lemma}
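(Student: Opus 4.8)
The plan is to apply It\^o's formula to the $2p$-th power of the $\Hbb$-norm of $\u$ and to derive a closed differential inequality for $\Ebb[\|\u(t)\|_2^{2p}]$ in which the nonlinear damping dominates the noise. First I would use the energy (It\^o) formula valid for the strong solution of \eqref{SCBFE} applied to $\|\u(t)\|_2^2$. Invoking $\langle\u,\Acal\u\rangle=\|\u\|_{\Vbb}^2$, the cancellation $\langle\u,\B(\u,\u)\rangle=b(\u,\u,\u)=0$ from \eqref{b0}, and $\langle\u,\K(\u)\rangle=\|\u\|_{\varpi+1}^{\varpi+1}$, this gives
\[
\drm\|\u\|_2^2=\Big[-2\mu\|\u\|_{\Vbb}^2-2\alpha\|\u\|_2^2-2\beta\|\u\|_{\varpi+1}^{\varpi+1}+2\langle\u,\f\rangle+\|\Grm(t,\u)\|_{\mathcal{L}_{\Qrm}}^2\Big]\drm t+2(\u,\Grm(t,\u)\drm\Wrm).
\]
Composing with $x\mapsto x^p$ via the It\^o rule for the continuous semimartingale $X:=\|\u\|_2^2$, and bounding the quadratic variation of the martingale part by $4\|\u\|_2^2\|\Grm(t,\u)\|_{\mathcal{L}_{\Qrm}}^2\,\drm t$, the drift of $\|\u\|_2^{2p}$ acquires the prefactor $p\|\u\|_2^{2(p-1)}$ together with the It\^o correction $2p(p-1)\|\u\|_2^{2(p-1)}\|\Grm(t,\u)\|_{\mathcal{L}_{\Qrm}}^2$, so that the two noise contributions combine with coefficient $p(2p-1)$.

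After a standard localization by stopping times (to legitimize taking expectations and to annihilate the stochastic integral), I would estimate each drift term. Young's inequality applied to $2p\langle\u,\f\rangle\le p\mu\|\u\|_{\Vbb}^2+\frac{p}{\mu}\|\f\|_{\Vbb^*}^2$, combined with the Poincar\'e inequality \eqref{poin}, converts the surviving viscous contribution into $-p\mu\lambda_1\|\u\|_2^{2p}$, while the linear growth condition (H.2) yields $p(2p-1)\|\Grm(t,\u)\|_{\mathcal{L}_{\Qrm}}^2\le p(2p-1)(K+\widetilde K\|\u\|_2^2)$. The outcome is a differential inequality of the schematic form
\[
\frac{\drm}{\drm t}\Ebb[\|\u\|_2^{2p}]\le -p(\mu\lambda_1+2\alpha)\Ebb[\|\u\|_2^{2p}]-2p\beta\,\Ebb\big[\|\u\|_2^{2(p-1)}\|\u\|_{\varpi+1}^{\varpi+1}\big]+p(2p-1)\widetilde K\,\Ebb[\|\u\|_2^{2p}]+c_{\f},
\]
where $c_{\f}$ collects the constant produced by the forcing term.

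The crux---and the feature that separates the CBFEs from the Navier--Stokes analysis of \cite{BFLZ_2025_Arxiv}---is the control of the destabilizing term $p(2p-1)\widetilde K\|\u\|_2^{2p}$, whose prefactor grows quadratically in $p$ and therefore cannot be dominated by the linear dissipation $-p(\mu\lambda_1+2\alpha)\|\u\|_2^{2p}$ for large $p$. The nonlinear damping resolves this: since $\Ocal$ is bounded and $\varpi>1$, H\"older's inequality gives $\|\u\|_2^{\varpi+1}\le|\Ocal|^{(\varpi-1)/2}\|\u\|_{\varpi+1}^{\varpi+1}$, so that $-2p\beta\|\u\|_2^{2(p-1)}\|\u\|_{\varpi+1}^{\varpi+1}\le -2p\beta|\Ocal|^{-(\varpi-1)/2}\|\u\|_2^{2p+\varpi-1}$ furnishes dissipation of the super-critical order $2p+\varpi-1>2p$. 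I would then apply Young's inequality with the conjugate exponents $\frac{2p+\varpi-1}{2p}$ and $\frac{2p+\varpi-1}{\varpi-1}$ to absorb $p(2p-1)\widetilde K\|\u\|_2^{2p}\le 2p\beta|\Ocal|^{-(\varpi-1)/2}\|\u\|_2^{2p+\varpi-1}+C$ into the damping, at the cost of a constant $C=C_{p,\beta,\widetilde K,\varpi,|\Ocal|}$; the lower-order term $p(2p-1)K\|\u\|_2^{2(p-1)}$ is disposed of in the same way against $\|\u\|_2^{2p}$.

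What remains is the clean linear inequality $\frac{\drm}{\drm t}\Ebb[\|\u\|_2^{2p}]\le-p(\mu\lambda_1+2\alpha)\Ebb[\|\u\|_2^{2p}]+\Mrm$, with $\Mrm$ gathering all the constants and depending only on the parameters listed in the statement. Gr\"onwall's lemma then yields $\Ebb[\|\u(t)\|_2^{2p}]\le e^{-p(\mu\lambda_1+2\alpha)t}\|\u_0\|_2^{2p}+\frac{\Mrm}{p(\mu\lambda_1+2\alpha)}\big(1-e^{-p(\mu\lambda_1+2\alpha)t}\big)$, and taking the supremum over $t\ge0$ gives \eqref{eqn-moments-2p-u}. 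I expect the principal technical nuisance to be the rigorous It\^o step for $1\le p<2$, where $x^{p-2}$ is singular at the origin; this is handled by regularizing with $(\eps+\|\u\|_2^2)^p$ and letting $\eps\downarrow0$, after which one must only track the $p$-dependence of the constants so that the final bound retains the stated $\frac{1}{p(\mu\lambda_1+2\alpha)}$ shape.
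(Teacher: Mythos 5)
Your proposal is correct and follows essentially the same route as the paper's proof: It\^o's formula for $\|\u(\cdot)\|_2^{2p}$, the combined noise coefficient $p(2p-1)$ under the linear growth condition (H.2), absorption of the forcing into half of the viscous term, absorption of the $\widetilde{K}$-term into the nonlinear damping via H\"older (boundedness of $\Ocal$) and Young, Poincar\'e, and Gronwall with rate $p(\mu\lambda_1+2\alpha)$. The only difference is presentational: you make explicit the absorption mechanism and the regularization of $x\mapsto x^p$ for $1\leq p<2$, both of which the paper compresses into ``H\"older's and Young's inequalities'' and the constant $\Mrm$.
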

\begin{proof}
	An application of the It\^o formula to the process $\|\u(\cdot)\|^{2p}_{2}$ ($p\geq1$), and H\"older's and Young's inequalities yield
	\begin{align}
		&\|\u(t)\|^{2p}_{2}+ 2p\int_{0}^{t}\|\u(s)\|_{2}^{2p-2}\left(\mu\|\u(s)\|^2_{\Vbb}+\alpha\|\u(s)\|^2_{2}+\beta\|\u(s)\|^{\varpi+1}_{\varpi+1}\right)\drm s
		\nonumber\\&
		= \|\u_0\|^{2p}_{2} +  p \int_{0}^{t}\|\u( s)\|_{2}^{2p-2}\|\Grm(s,\u(s))\|^2_{\mathcal{L}_{\Qrm}} \drm s
		+  2 p\int_{0}^{t}\|\u( s)\|_{2}^{2p-2}\left(\Grm(s,\u(s))\drm \Wrm(s) , \u(s)\right) 
		\nonumber\\ & \quad  +  2 p\int_{0}^{t}\|\u( s)\|_{2}^{2p-2}\left< \u( s),\f  \right> \drm s
		 + 2 p(p-1) \int_{0}^{t}\|\u( s)\|^{2p-4}_{2} \|(\Grm(s,\u(s)))^{*}\u( s)\|^2_{2}\drm s \label{eqn-forProbabilityEstimates}
		\\&
		\leq \|\u_0\|^{2p}_{2} +  p(2p-1) \int_{0}^{t}\|\u( s)\|_{2}^{2p-2}\|\Grm(s,\u(s))\|^2_{\mathcal{L}_{\Qrm}} \drm s
		 +  2 p\int_{0}^{t}\|\u( s)\|_{2}^{2p-2}\|\f\|_{\Vbb^{*}} \|\u( s)\|_{\Vbb} \drm s 
		\nonumber\\ 
		&\quad +  2 p\int_{0}^{t}\|\u( s)\|_{2}^{2p-2}\left(\Grm(s,\u(s))\drm \Wrm(s) , \u( s)\right) 
		\nonumber\\&
		\leq \|\u_0\|^{2p}_{2} +  p(2p-1)K \int_{0}^{t}\|\u( s)\|_{2}^{2p-2} \drm s 
		+  p(2p-1)\widetilde{K} \int_{0}^{t}\|\u( s)\|_{2}^{2p}\drm s
		+  2 p\int_{0}^{t}\|\u( s)\|_{2}^{2p-2}\|\f\|_{\Vbb^{*}} \|\u( s)\|_{\Vbb} \drm s 
		\nonumber\\ 
		&\quad +  2 p\int_{0}^{t}\|\u( s)\|_{2}^{2p-2}\left(\Grm(s,\u(s))\drm \Wrm(s) , \u( s)\right) 
		\nonumber\\ 
		& \leq \|\u_0\|^{2p}_{2}  +    p \mu \int_{0}^{t}\|\u(s)\|_{2}^{2p-2} \|\u(s)\|^2_{\Vbb} \drm s   + p \beta \int_{0}^{t}\|\u(s)\|_{2}^{2p-2} \|\u(s)\|^{\varpi+1}_{\varpi+1} \drm s  +  t \Mrm_{p,\mu,\beta,\varpi,K,\widetilde{K},|\Ocal|,\|\f\|_{\Vbb^{*}}} 
		\nonumber\\ 
		&\quad +  2 p\int_{0}^{t}\|\u( s)\|_{2}^{2p-2}\left(\Grm(s,\u(s))\drm \Wrm(s) , \u( s)\right),\label{EE1}
	\end{align}
where the constant $\Mrm := \Mrm_{p,\mu,\beta,\varpi,K,\widetilde{K},|\Ocal|,\|\f\|_{\Vbb^{*}}} >0$ depends on $p$, $\mu$, $\beta$, $\varpi$, $K$, $\widetilde{K}$, $|\Ocal|$ (measure of the bounded domain $\Ocal$) and $\|\f\|_{\Vbb^{*}}$.

Taking expectations on both sides of \eqref{EE1}, using \eqref{poin}, and noting that the stochastic integral is a (local) martingale, yield
\begin{align*}
	&\Ebb \left[\|\u(t)\|^{2p}_{2}\right]
	+ p (\mu\lambda_1+2\alpha) \int_{0}^{t}\Ebb\left[\|\u(s)\|_{2}^{2p}\right]\drm s 
	  \leq \|\u_0\|^{2p}_{2}  +      t \Mrm_{p,\mu,\beta,\varpi,K,\widetilde{K},|\Ocal|,\|\f\|_{\Vbb^{*}}},
\end{align*}
which, by an application of Gronwall's inequality, implies
\begin{align*}
	\Ebb \left[\|\u(t)\|^{2p}_{2}\right] \leq \|\u_0\|^{2p}_{2} e^{-p (\mu\lambda_1+2\alpha) t} + \frac{\Mrm_{p,\mu,\beta,\varpi,K,\widetilde{K},|\Ocal|,\|\f\|_{\Vbb^{*}}}}{p (\mu\lambda_1+2\alpha)} (1- e^{-p (\mu\lambda_1+2\alpha) t}),
\end{align*}
which immediately gives \eqref{eqn-moments-2p-u}.
\end{proof}

\begin{lemma}\label{lem-pth-moments-Urm}
	Under  Hypothesis \ref{hyp-noise}, for $p\geq 1$, $\varpi>1$, there exists a constant $\widetilde{\Mrm}$ depends on $p$, $\mu$, $\lambda_1$, $\alpha$, $\beta$, $\varpi$, $K$, $\widetilde{K}$, $|\Ocal|$ (measure of the bounded domain $\Ocal$) and $\|\f\|_{\Vbb^{*}}$, and  independent of $t$ such that the solution $\Urm$ of \eqref{SCBFE-CDA} satisfies
	\begin{align}\label{eqn-moments-2p-Urm}
		& \sup_{t\geq 0}	\Ebb \left[\|\Urm(t)\|^{2p}_{2}\right] \leq   \widetilde{\Mrm}(1+\|\u_0\|^{2p}_{2} + \|\Urm_0\|^{2p}_{2}).
	\end{align}
\end{lemma}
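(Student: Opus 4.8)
The plan is to reproduce the proof of Lemma~\ref{lem-pth-moments} for the assimilation process $\Urm$, the nudging term being the only new ingredient. First I would apply the It\^o formula to $\|\Urm(\cdot)\|_2^{2p}$; since $b(\Urm,\Urm,\Urm)=0$, $\langle\Urm,\Acal\Urm\rangle=\|\Urm\|_\Vbb^2$ and $\langle\Urm,\K(\Urm)\rangle=\|\Urm\|_{\varpi+1}^{\varpi+1}$, this produces precisely the identity \eqref{eqn-forProbabilityEstimates} with $\u$ replaced by $\Urm$, plus the single extra drift
\begin{align*}
	-2p\sigma\int_0^t\|\Urm(s)\|_2^{2p-2}\bigl(\Urm(s),\,\Pcal_{\frac{\varpi+1}{\varpi}}\Rrm_{\h}(\Urm(s)-\u(s))\bigr)\,\drm s.
\end{align*}
The forcing, It\^o-correction and quadratic-variation terms are handled verbatim as in Lemma~\ref{lem-pth-moments} (Young's inequality, Hypothesis~\ref{hyp-noise}(H.2) and the Poincar\'e inequality \eqref{poin}), leaving on the left the dissipation $p(\mu\lambda_1+2\alpha)\|\Urm\|_2^{2p}$ together with $2p\mu\|\Urm\|_2^{2p-2}\|\Urm\|_\Vbb^2$, and on the right a constant of type $\Mrm$; the stochastic integral is a local martingale that vanishes under expectation after the usual stopping-time localisation.

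The crux is the nudging term. Using that $\Urm$ is divergence free, so that $(\Urm,\Pcal_{\frac{\varpi+1}{\varpi}}\Rrm_{\h}(\Urm-\u))=(\Urm,\Rrm_{\h}(\Urm-\u))$, I would write
\begin{align*}
	(\Urm,\Rrm_{\h}(\Urm-\u))=\|\Urm\|_2^2-(\Urm,\u)-\bigl(\Urm,\,(\Urm-\u)-\Rrm_{\h}(\Urm-\u)\bigr),
\end{align*}
so that the first summand supplies the favourable dissipation $-2p\sigma\|\Urm\|_2^{2p}$, which must be retained. The term $2p\sigma\|\Urm\|_2^{2p-2}(\Urm,\u)\le 2p\sigma\|\Urm\|_2^{2p-1}\|\u\|_2$ splits, by Young's inequality, into a multiple of $\sigma\|\Urm\|_2^{2p}$ -- absorbed by the dissipation just produced -- and a multiple of $\sigma\|\u\|_2^{2p}$, whose expectation is bounded uniformly in time by Lemma~\ref{lem-pth-moments}. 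For the last summand, the approximation property \eqref{eqn-data-inter} gives $\|(\Urm-\u)-\Rrm_{\h}(\Urm-\u)\|_2\le c_0^{1/2}\h\,\|\Urm-\u\|_{\mathbb{H}^1}$; its $\|\Urm\|_\Vbb$ part can be absorbed into the viscous dissipation $2p\mu\|\Urm\|_2^{2p-2}\|\Urm\|_\Vbb^2$, but its $\|\u\|_\Vbb$ part is the delicate one, since $\|\u\|_\Vbb$ admits only time-averaged (not pointwise) moment bounds.

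The cleanest way to close the estimate -- and the one I would adopt -- is to exploit that the standard interpolants of Subsection~\ref{sec-inter} are $\Lbb^2$-contractions, so $\|\Rrm_{\h}(\Urm-\u)\|_2\le\|\Urm\|_2+\|\u\|_2$ and the entire nudging term is controlled by $\Lbb^2$-quantities alone: after extracting $-2p\sigma\|\Urm\|_2^{2p}$ it reduces to the manageable term $2p\sigma\|\Urm\|_2^{2p-1}\|\u\|_2$, thereby removing any appearance of $\|\u\|_\Vbb$. Taking expectations then yields a differential inequality $\frac{\drm}{\drm t}\Ebb[\|\Urm\|_2^{2p}]+c_1\,\Ebb[\|\Urm\|_2^{2p}]\le c_2\bigl(1+\sup_{s\ge0}\Ebb[\|\u(s)\|_2^{2p}]\bigr)$, and Gronwall's inequality together with Lemma~\ref{lem-pth-moments} gives \eqref{eqn-moments-2p-Urm}, with $\widetilde{\Mrm}$ collecting the constants and the $\u$-moment bound. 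The main obstacle is to carry out all of this uniformly in $\sigma$ and $\h$ (on which $\widetilde{\Mrm}$ must not depend): the favourable $-2p\sigma\|\Urm\|_2^{2p}$ has to be preserved in order to absorb the $\sigma$-weighted cross terms, and the $\|\u\|_\Vbb$ coupling must be routed through the $\Lbb^2$-moments of $\u$ rather than estimated in isolation -- which is exactly what the $\Lbb^2$-contractivity of the interpolant (or, failing that, a compatibility balance between $\sigma$ and $\h$) achieves.
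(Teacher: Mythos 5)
Your overall skeleton is exactly what the paper intends: the paper's own proof is omitted, saying only that the nudging term can be estimated as in \cite[Lemma A.4]{BFLZ_2025_Arxiv} and that Lemma \ref{lem-pth-moments} then completes the argument, and your plan (It\^o formula for $\|\Urm(\cdot)\|_2^{2p}$, identical treatment of every term except the nudging term, then Gronwall together with Lemma \ref{lem-pth-moments}) is the natural way to fill in those details. Your middle paragraph also pinpoints the genuinely delicate issue correctly: the interpolation error couples $\Urm$ to $\|\u\|_{\Vbb}$, for which no pointwise-in-time moment bound is available.

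However, the ``cleanest way'' you finally adopt does not close as stated. Splitting $\Rrm_{\h}(\Urm-\u)=\Rrm_{\h}\Urm-\Rrm_{\h}\u$, the $\Lbb^2$-contractivity of $\Rrm_{\h}$ does control the part $(\Urm,\Rrm_{\h}\u)\le\|\Urm\|_2\|\u\|_2$ (this is what removes $\|\u\|_{\Vbb}$), but it cannot by itself deliver the dissipation $-2p\sigma\|\Urm\|_2^{2p}$: extracting that dissipation requires the lower bound $(\Urm,\Rrm_{\h}\Urm)\ge\|\Urm\|_2^2-\left(\Urm,\Urm-\Rrm_{\h}\Urm\right)$, and in $\Lbb^2$-terms alone the error $\left(\Urm,\Urm-\Rrm_{\h}\Urm\right)$ is only bounded by $2\|\Urm\|_2^2$, which wipes out the dissipation completely (for the spectral interpolant, if $\Urm$ lies in the unobserved modes then $(\Urm,\Rrm_{\h}\Urm)=0$ and no $\sigma$-dissipation survives at all). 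Controlling this error genuinely requires the approximation property \eqref{eqn-data-inter}, giving $\sigma\left(\Urm,\Urm-\Rrm_{\h}\Urm\right)\le\sigma c_0^{1/2}\h\|\Urm\|_2\|\Urm\|_{\Vbb}\le\tfrac{\sigma}{2}\|\Urm\|_2^2+\tfrac{\sigma c_0\h^2}{2}\|\Urm\|_{\Vbb}^2$, and the last term can only be absorbed into the viscous dissipation under the compatibility condition $\sigma c_0\h^2\le\mu$ that is the standing assumption of the paper's theorems. So the contraction (for the $\Rrm_{\h}\u$ part) and the $\sigma$--$\h$ balance (for the $\Urm-\Rrm_{\h}\Urm$ part) must be used \emph{simultaneously}; your closing sentence presents them as alternatives (``or, failing that''), which is the wrong logical relationship. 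Two further caveats: $\Lbb^2$-contractivity is not among the paper's hypotheses --- only \eqref{eqn-data-inter} is assumed --- although it does hold for both examples of Subsection \ref{sec-inter}; and conversely, without some such extra structure the constant $\widetilde{\Mrm}$ cannot be made independent of $\sigma$ (one is left with $2p\sigma\|\Urm\|_2^{2p-1}\|\u\|_2$ facing only the $\mu\lambda_1+2\alpha$ dissipation), so the uniformity you rightly flag as the main obstacle is precisely what fails unless the two ingredients above are combined.
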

\begin{proof}
	Note that system \eqref{SCBFE-CDA} contains an additional term, $-\sigma \Rrm_{\h}(\Urm-\u)$, compared to system \eqref{SCBFE}. This term can be estimated in the same way as in the proof of \cite[Lemma A.4]{BFLZ_2025_Arxiv}. Therefore, we omit the proof. Making use of Lemma \ref{lem-pth-moments}, one can complete the proof.
\end{proof}

\subsection{Estimates in probability}
Let us now provide estimates in the probability depending on exponent $\varpi>1$ which will be useful in the sequel.

\begin{lemma}\label{lem-prob-esti-1}
Suppose that $1< \varpi < +\infty$.	Under  Hypothesis \ref{hyp-noise}, the solution $\u$ of \eqref{SCBFE} satisfies the following estimate:	
	\begin{itemize}
		\item [1.]  For $L=0$,
		\begin{align}\label{eqn-Prob-Est-varpi1}
			& \Pbb\left\{\sup_{t \geq T} \left( \|\u(t)\|^{2}_{2} + \int_{0}^{t}\left(\frac{\mu}{2}\|\u(s)\|^2_{\Vbb}+2\beta\|\u(s)\|^{\varpi+1}_{\varpi+1}\right)\drm s - \|\u_0\|^{2}_{2}  - \left[K  + \frac{\|\f\|_{\Vbb^*}^2 }{\mu} \right]t  \right) \geq R \right\}
			\nonumber\\ 
			& \leq e^{- \frac{1}{8K} \left(\frac{\mu\lambda_1}{2} + 2\alpha\right) R};
		\end{align}
	for all $T\geq 0$ and $R>0$.\\
		\item [2.] For all $2< q <\infty$ and for $L>0$, there exist constants $\widehat{\Mrm}:= \widehat{\Mrm}_{K,\mu,\beta,\varpi,L,|\Ocal|,\|\f\|_{\Vbb^*}}>0$ and $C_{q,K,L}>0$ such that 
		\begin{align}\label{eqn-Prob-Est-varpi2}
			& \Pbb\left\{\sup_{t \geq T} \left( \|\u(t)\|^{2}_{2} + \int_{0}^{t}\left(\mu\|\u(s)\|^2_{\Vbb}+2\alpha\|\u(s)\|^2_{2}+2\beta\|\u(s)\|^{\varpi+1}_{\varpi+1}\right)\drm s - \|\u_0\|_2^2 - \widehat{\Mrm}t -t-2  \right) \geq R \right\}
			\nonumber\\
			& \leq C_{q,K,L} \left[1+ \|\u_0\|^{2q}_{2} + \frac{\Mrm}{q (\mu\lambda_1+2\alpha)}\right]  \frac{1}{(R+T)^{\frac{q}{2}-1}},
		\end{align} 	
		for all $T\geq 0$ and $R>0$, where $\Mrm>0$ is the constant appearing in \eqref{eqn-moments-2p-u}.
	\end{itemize}
The explicit value of the constant $\widehat{\Mrm}$ is as follows:
\begin{align}\label{eqn-widehatM}
	\widehat{\Mrm} = K  + \frac{\|\f\|_{\Vbb^*}^2 }{\mu} + \left\{\frac{\beta(\varpi+1)}{2}\right\}^{-\frac{2}{\varpi-1}} \left\{\frac{\varpi+1}{\varpi-1}\right\}^{-1} \left\{L \right\}^{\frac{\varpi+1}{\varpi-1}}  |\Ocal|.
\end{align}
\end{lemma}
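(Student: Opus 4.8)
The plan is to establish both probability estimates via an exponential-martingale (or moment-plus-Markov) argument applied to the It\^o formula for $\|\u(\cdot)\|_2^2$, which is exactly \eqref{eqn-forProbabilityEstimates} specialized to $p=1$. First I would write down the It\^o identity
\begin{align*}
	\|\u(t)\|_2^2 + 2\int_0^t\left(\mu\|\u(s)\|_{\Vbb}^2+\alpha\|\u(s)\|_2^2+\beta\|\u(s)\|_{\varpi+1}^{\varpi+1}\right)\drm s
	= \|\u_0\|_2^2 + \Mrm_1(t) + \Mrm_2(t),
\end{align*}
where $\Mrm_1(t):=\int_0^t\|\Grm(s,\u(s))\|_{\mathcal{L}_{\Qrm}}^2\drm s + 2\int_0^t\langle\u(s),\f\rangle\drm s$ collects the drift contributions and $\Mrm_2(t):=2\int_0^t(\Grm(s,\u(s))\drm\Wrm(s),\u(s))$ is the stochastic integral, whose quadratic variation is $\langle\Mrm_2\rangle_t = 4\int_0^t\|(\Grm(s,\u(s)))^*\u(s)\|_2^2\drm s \le 4\int_0^t\|\Grm(s,\u(s))\|_{\mathcal{L}_{\Qrm}}^2\|\u(s)\|_2^2\drm s$. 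The body force is handled by Young's inequality, $2\langle\u,\f\rangle\le \mu\|\u\|_{\Vbb}^2 + \mu^{-1}\|\f\|_{\Vbb^*}^2$, which absorbs half the viscous dissipation and leaves the coercive term $\tfrac{\mu}{2}\|\u\|_{\Vbb}^2 + 2\alpha\|\u\|_2^2 + 2\beta\|\u\|_{\varpi+1}^{\varpi+1}$ on the left in the $L=0$ case, and the full $\mu\|\u\|_{\Vbb}^2$ in the $L>0$ case after a cleaner split. The linear growth Hypothesis (H.2) controls $\|\Grm\|_{\mathcal{L}_{\Qrm}}^2\le K+\widetilde K\|\u\|_2^2$, but crucially, in the statement the diffusion coefficient of the $\|\u\|_2^2$ moment has been suppressed (i.e.\ the estimates are written with $L$, not $\widetilde K$); so I would first note that in part 1 we are in the case $L=0$, meaning $\Grm$ is independent of $\u$ up to the $\Hbb$-norm so that $\widetilde K$ does not enter, and $\|\Grm\|_{\mathcal{L}_{\Qrm}}^2\le K$ is a deterministic bound.

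For part 1 ($L=0$), the key observation is that $\langle\Mrm_2\rangle_t\le 4K\int_0^t\|\u(s)\|_2^2\drm s$, so I would apply the exponential supermartingale inequality: for $\gamma>0$ the process $\exp(\gamma\Mrm_2(t)-\tfrac{\gamma^2}{2}\langle\Mrm_2\rangle_t)$ is a supermartingale, hence for any threshold the uniform-in-$t$ bound
\begin{align*}
	\Pbb\left\{\sup_{t\ge 0}\left(\Mrm_2(t)-\frac{\gamma}{2}\langle\Mrm_2\rangle_t\right)\ge \rho\right\}\le e^{-\gamma\rho}
\end{align*}
holds (this is the standard supermartingale maximal inequality, cf.\ the tools from \cite{GHMR17,KS} cited in the introduction). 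Rearranging the It\^o identity so that the full coercive integral appears, the term $\tfrac{\gamma}{2}\langle\Mrm_2\rangle_t\le 2\gamma K\int_0^t\|\u\|_2^2\drm s$ must be dominated by the available $2\alpha\int_0^t\|\u\|_2^2\drm s$ plus the Poincar\'e-converted portion $\tfrac{\mu\lambda_1}{2}\int_0^t\|\u\|_2^2\drm s$ of the viscous term via \eqref{poin}; choosing $\gamma$ so that $2\gamma K = \tfrac{\mu\lambda_1}{2}+2\alpha$, i.e.\ $\gamma=\tfrac{1}{4K}(\tfrac{\mu\lambda_1}{2}+2\alpha)$, makes these cancel exactly and yields the stated exponent $\tfrac{1}{8K}(\tfrac{\mu\lambda_1}{2}+2\alpha)$ after accounting for the shift by $T$ and the factor in $\rho=R$. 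The deterministic drift $K+\mu^{-1}\|\f\|_{\Vbb^*}^2$ per unit time is precisely the affine term subtracted inside the braces, and replacing $\sup_{t\ge0}$ by $\sup_{t\ge T}$ only improves the bound, giving \eqref{eqn-Prob-Est-varpi1}.

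For part 2 ($L>0$) the exponential-martingale route fails because the quadratic variation now carries the genuinely stochastic factor $\widetilde K\|\u\|_2^2$ with no deterministic ceiling, so I would instead use the moment bound of Lemma \ref{lem-pth-moments} combined with a Burkholder--Davis--Gundy estimate and Markov's inequality. The strategy is to bound the $L^{q/2}$-norm of $\sup_{t\ge T}\Mrm_2(t)$ (after the affine drift $\widehat\Mrm t + t + 2$ has been subtracted, where $\widehat\Mrm$ in \eqref{eqn-widehatM} absorbs $K+\mu^{-1}\|\f\|_{\Vbb^*}^2$ together with the Young-inequality cost of converting the $L\|\u\|_2^2$ diffusion contribution against the $2\beta\|\u\|_{\varpi+1}^{\varpi+1}$ dissipation — this is the origin of the $\{\tfrac{\beta(\varpi+1)}{2}\}^{-2/(\varpi-1)}$ factor) using BDG and then $\sup_{t\ge0}\Ebb\|\u(t)\|_2^{2q}\le\|\u_0\|_2^{2q}+\tfrac{\Mrm}{q(\mu\lambda_1+2\alpha)}$; Markov's inequality at level $R$ then produces the polynomial tail $(R+T)^{-(q/2-1)}$. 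The main obstacle, and the step requiring the most care, is exactly this $L>0$ case: one must convert the supremum-over-$t\ge T$ of a time-integrated stochastic process into a single polynomial moment, which means splitting $[T,\infty)$ into dyadic blocks, applying BDG on each, and summing a convergent series in such a way that the exponent $q/2-1$ and the shift $T$ emerge cleanly; keeping track of the constant $C_{q,K,L}$ and verifying that the Young-inequality absorption of the nonlinear damping term is uniform in the exponent $\varpi>1$ (so that \eqref{eqn-widehatM} holds with the stated explicit form) is where the bulk of the technical work lies.
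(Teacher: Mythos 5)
Your proposal is correct and follows essentially the same route as the paper: part 1 is the identical It\^o-formula/Young/Poincar\'e argument closed by the exponential martingale inequality, and part 2 uses the same H\"older--Young absorption of the $L\|\u\|_2^2$ It\^o-correction term into the damping dissipation (producing exactly the third term of $\widehat{\Mrm}$ in \eqref{eqn-widehatM}), followed by a block decomposition of $[T,+\infty)$ with martingale moment bounds, Markov's inequality, and the uniform-in-time moments of Lemma \ref{lem-pth-moments} to sum the resulting series into the tail $(R+T)^{-(q/2-1)}$. The only cosmetic difference is that the paper packages the blocking step into the cited maximal inequality \cite[Lemma B.7]{DSZ_2025_Arxiv}, which works on unit blocks $[m,m+1]$, rather than carrying out your dyadic decomposition by hand.
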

\begin{proof}	
	Using \eqref{eqn-forProbabilityEstimates} for $p=1$, Hypothesis \ref{hyp-noise}, and Cauchy-Schwartz and Young inequalities, we have 
	\begin{align}\label{PrEs1}
		& \|\u(t)\|^{2}_{2} + \int_{0}^{t}\left(\mu\|\u(s)\|^2_{\Vbb}+2\alpha\|\u(s)\|^2_{2}+2\beta\|\u(s)\|^{\varpi+1}_{\varpi+1}\right)\drm s
		\nonumber\\&
		= \|\u_0\|^{2}_{2} -  \mu \int_{0}^{t}\|\u(s)\|^2_{\Vbb} \drm s -  \beta \int_{0}^{t}\|\u(s)\|^{\varpi+1}_{\varpi+1} \drm s + \int_{0}^{t} \|\Grm(s,\u(s))\|^2_{\mathcal{L}_{\Qrm}} \drm s
		+  2 \int_{0}^{t} \left< \u( s),\f  \right> \drm s
		\nonumber\\ & \quad +  \underbrace{2  \int_{0}^{t} \left(\Grm(s,\u(s))\drm \Wrm(s) , \u(s)\right) }_{=: \mathcal{M}(t)}
		\nonumber\\&
		\leq  \|\u_0\|^{2}_{2} -  \mu \int_{0}^{t}\|\u(s)\|^2_{\Vbb} \drm s -  \beta \int_{0}^{t}\|\u(s)\|^{\varpi+1}_{\varpi+1} \drm s + \int_{0}^{t} \|\Grm(s,\u(s))- \Grm(s,\boldsymbol{0})\|^2_{\mathcal{L}_{\Qrm}} \drm s  + \int_{0}^{t} \|\Grm(s,\boldsymbol{0})\|^2_{\mathcal{L}_{\Qrm}} \drm s
		\nonumber\\ 
		&	\quad  +  2 \int_{0}^{t} \left< \u( s),\f  \right> \drm s
		+  \mathcal{M}(t)
		\nonumber\\ 
		& \leq \|\u_0\|^{2}_{2} -  \beta \int_{0}^{t}\|\u(s)\|^{\varpi+1}_{\varpi+1} \drm s + \left[K  + \frac{\|\f\|_{\Vbb^*}^2 }{\mu}\right]t  + L \int_{0}^{t} \|\u(s)\|^2_{2} \drm s
		+   \mathcal{M}(t)
		\nonumber\\ 
		& \leq \|\u_0\|^{2}_{2}  -  \beta \int_{0}^{t}\|\u(s)\|^{\varpi+1}_{\varpi+1} \drm s  + \left[K  + \frac{\|\f\|_{\Vbb^*}^2 }{\mu}\right]t  + L |\Ocal|^{\frac{\varpi-1}{\varpi+1}} \int_{0}^{t} \|\u(s)\|^2_{\varpi+1} \drm s + \mathcal{M}(t)
		\nonumber\\ 
		& \leq \|\u_0\|^{2}_{2}  + \widehat{\Mrm} t  	+   \mathcal{M}(t).
	\end{align}
	where $\widehat{\Mrm}$ is given by \eqref{eqn-widehatM}. Note that $\{\mathcal{M}(t)\}_{t\geq0}$	 is a martingale process with quadratic variation given by
	\begin{align}\label{PrEs2}
		[\mathcal{M}](t) & \leq 4 \int_{0}^{t} \|\u(s)\|_2^2 \|\Grm(s,\u(s))\|^2_{\mathcal{L}_{\Qrm}} \drm s  
		\nonumber\\ 
		& \leq 8 \int_{0}^{t} \|\u(s)\|_2^2 [\|\Grm(s,\u(s)) - \Grm(s,\boldsymbol{0})\|^2_{\mathcal{L}_{\Qrm}} + \|\Grm(s,\boldsymbol{0})\|^2_{\mathcal{L}_{\Qrm}}] \drm s  
		\nonumber\\ 
		& \leq 8 \int_{0}^{t}  \left[L\|\u(s)\|_2^4  + K\|\u(s)\|_2^2 \right] \drm s.
	\end{align}
	
	\vskip 2mm
	\noindent
	\textbf{Case 1.} \textit{When $L=0$}. In this case, we have 
	\begin{align*}
		& \|\u(t)\|^{2}_{2} + \int_{0}^{t}\left(\frac{\mu}{2}\|\u(s)\|^2_{\Vbb}+2\beta\|\u(s)\|^{\varpi+1}_{\varpi+1}\right)\drm s  - \|\u_0\|^{2}_{2}  - \left[K  + \frac{\|\f\|_{\Vbb^*}^2 }{\mu} \right]t
		\nonumber\\ 
		&  \leq \mathcal{M}(t)  - \int_{0}^{t}\left(\frac{\mu}{2}\|\u(s)\|^2_{\Vbb}+\alpha\|\u(s)\|^2_{2}\right) \drm s
		\nonumber\\ 
		&  \leq \mathcal{M}(t)  - \left(\frac{\mu\lambda_1}{2} + 2\alpha\right) \int_{0}^{t}\|\u(s)\|^2_{2} \drm s
		\nonumber\\ 
		&  \leq \mathcal{M}(t)  - \frac{1}{8K} \left(\frac{\mu\lambda_1}{2} + 2\alpha\right) [\mathcal{M}](t),
	\end{align*}
	which implies by the exponential martingale inequality \cite[Proposition 3.1]{Glatt-Holtz_2014_Arxiv} that 
	\begin{align*}
		& \Pbb\left\{\sup_{t \geq T} \left( \|\u(t)\|^{2}_{2} + \int_{0}^{t}\left(\frac{\mu}{2}\|\u(s)\|^2_{\Vbb}+2\beta\|\u(s)\|^{\varpi+1}_{\varpi+1}\right)\drm s - \|\u_0\|^{2}_{2}  - \left[K  + \frac{\|\f\|_{\Vbb^*}^2 }{\mu} \right]t  \right) \geq R \right\}
		\nonumber\\ 
		&  \leq \Pbb \left\{  \sup_{t \geq T} \left( \mathcal{M}(t)  - \frac{1}{8K} \left(\frac{\mu\lambda_1}{2} + 2\alpha\right) [\mathcal{M}](t) \right) \geq R \right\}  \leq e^{- \frac{1}{8K} \left(\frac{\mu\lambda_1}{2} + 2\alpha\right) R}.
	\end{align*}
	This completes the proof of \textbf{Case 1.}

	\vskip 2mm
	\noindent
	\textbf{Case 2.} \textit{When $L>0$}. Let us subtract $t+2$ from both side of \eqref{PrEs1} and apply \cite[Lemma B.7]{DSZ_2025_Arxiv} to obtain for any $T\geq 0$ and $q>2$
	\begin{align*}
		& \Pbb\left\{\sup_{t \geq T} \left( \|\u(t)\|^{2}_{2} + \int_{0}^{t}\left(\mu\|\u(s)\|^2_{\Vbb}+2\alpha\|\u(s)\|^2_{2}+2\beta\|\u(s)\|^{\varpi+1}_{\varpi+1}\right)\drm s - \|\u_0\|_2^2 - \widehat{\Mrm}t -t-2  \right) \geq R \right\}
		\nonumber\\ 
		&  \leq \Pbb \left\{  \sup_{t \geq T} \left( \mathcal{M}(t)-t-2 \right) \right\}
		\nonumber\\ 
		& \leq c_q \sum_{m\geq \lfloor T\rfloor} \frac{\Ebb \left[\{[\mathcal{M}](t)(m+1)\}^{\frac{q}{2}}\right]}{(R+m+2)^q}
		\nonumber\\ 
		& \leq c_q \sum_{m\geq \lfloor T\rfloor} \frac{1}{(R+m+2)^q} \Ebb \left[\left\{C_{K,L} \int_0^{m+1} (1+ \|\u(s)\|^4_2 ) \drm s\right\}^{\frac{q}{2}}\right]
		\nonumber\\ 
		& \leq C_{q,K,L} \sum_{m\geq \lfloor T\rfloor} \frac{(m+1)^{\frac{q-2}{2}}}{(R+m+2)^q} \Ebb \left[ \int_0^{m+1} (1+ \|\u(s)\|^{2q}_2 ) \drm s\right]
		\nonumber\\ 
		& \leq C_{q,K,L} \left[1+ \|\u_0\|^{2q}_{2} + \frac{\Mrm}{q (\mu\lambda_1+2\alpha)}\right] \sum_{m\geq \lfloor T\rfloor} \frac{(m+1)^{\frac{q}{2}}}{(R+m+2)^q} 
		\nonumber\\ 
		& \leq C_{q,K,L} \left[1+ \|\u_0\|^{2q}_{2} + \frac{\Mrm}{q (\mu\lambda_1+2\alpha)}\right] \sum_{m\geq \lfloor T\rfloor} \frac{1}{(R+m+2)^{\frac{q}{2}}} 
		\nonumber\\ 
		& \leq C_{q,K,L} \left[1+ \|\u_0\|^{2q}_{2} + \frac{\Mrm}{q (\mu\lambda_1+2\alpha)}\right]  \frac{1}{(R+T)^{\frac{q}{2}-1}}.
	\end{align*}
This completes the proof of \textbf{Case 2.}
\end{proof} 

\begin{lemma}\label{lem-prob-esti-2}
	Suppose that $\varpi = 3$.	Under  Hypothesis \ref{hyp-noise}, the solution $\u$ of \eqref{SCBFE} satisfies the following estimate:
		\begin{align}\label{eqn-Prob-Est-varpi=3}
			& \Pbb\left\{\sup_{t \geq T} \left( \|\u(t)\|^{2}_{2} + \frac{\mu}{2} \int_{0}^{t}\|\u(s)\|^2_{\Vbb} \drm s - \|\u_0\|^{2}_{2}  - \widehat{\Mrm}t  \right) \geq R \right\}
			\leq 
			\begin{cases}
				e^{-  \frac{1}{8K} \left(\frac{\mu\lambda_1}{2} + 2\alpha\right) R}, & \text{ for } L=0; \\
				e^{- \min \left\{ \frac{\beta}{4L |\Ocal|^{\frac12}}, \frac{1}{8K} \left(\frac{\mu\lambda_1}{2} + 2\alpha\right)\right\} R}, & \text{ for } L>0;
			\end{cases}
		\end{align}
		for all $T\geq 0$ and $R>0$, where $\widehat{\Mrm}$ is given by \eqref{eqn-widehatM}.
\end{lemma}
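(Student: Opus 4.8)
The plan is to mirror the proof of Lemma \ref{lem-prob-esti-1}, but to exploit the critical exponent $\varpi=3$ so that the dissipation $\beta\|\u\|_{\varpi+1}^{\varpi+1}=\beta\|\u\|_4^4$ becomes available to control a \emph{quartic} contribution arising from the quadratic variation of the noise. I would start from the It\^o identity \eqref{eqn-forProbabilityEstimates} with $p=1$ and follow the first lines of \eqref{PrEs1}: after bounding the forcing by Young's inequality and the noise drift by Hypothesis \ref{hyp-noise}, I reach the intermediate estimate
\begin{align*}
\|\u(t)\|_2^2 + \int_0^t\left(\mu\|\u(s)\|_{\Vbb}^2 + 2\alpha\|\u(s)\|_2^2 + 2\beta\|\u(s)\|_4^4\right)\drm s \leq \|\u_0\|_2^2 + \left(K+\tfrac{\|\f\|_{\Vbb^*}^2}{\mu}\right)t + L\int_0^t\|\u(s)\|_2^2\drm s + \mathcal{M}(t),
\end{align*}
where $\mathcal{M}(t)$ is the martingale of \eqref{PrEs1} whose quadratic variation satisfies $[\mathcal{M}](t)\le 8\int_0^t(L\|\u(s)\|_2^4+K\|\u(s)\|_2^2)\drm s$ by \eqref{PrEs2}. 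The objective, exactly as in Lemma \ref{lem-prob-esti-1}, Case 1, is to produce an inequality of the form $(\text{target})\le \mathcal{M}(t)-\gamma[\mathcal{M}](t)$ and then invoke the exponential martingale inequality \cite[Proposition 3.1]{Glatt-Holtz_2014_Arxiv}.

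For $L>0$ I would first absorb the noise drift into the dissipation: by H\"older's inequality $\|\u\|_2^2\le|\Ocal|^{1/2}\|\u\|_4^2$ and Young's inequality, $L\|\u\|_2^2\le\beta\|\u\|_4^4+\tfrac{L^2|\Ocal|}{4\beta}$, which consumes one copy of $\beta\|\u\|_4^4$ and generates precisely the constant $\widehat{\Mrm}$ of \eqref{eqn-widehatM} specialized to $\varpi=3$. After moving $\tfrac{\mu}{2}\|\u\|_{\Vbb}^2$ to the target, this leaves the residual dissipation $\int_0^t(\tfrac{\mu}{2}\|\u\|_{\Vbb}^2+2\alpha\|\u\|_2^2+\beta\|\u\|_4^4)\drm s$. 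The crucial step is then to dominate the two pieces of $[\mathcal{M}](t)$: the quadratic piece $K\|\u\|_2^2$ is controlled by $\tfrac{\mu}{2}\|\u\|_{\Vbb}^2+2\alpha\|\u\|_2^2\ge(\tfrac{\mu\lambda_1}{2}+2\alpha)\|\u\|_2^2$ via the Poincar\'e inequality \eqref{poin}, while the quartic piece $L\|\u\|_2^4$ is controlled by the surviving $\beta\|\u\|_4^4$ through $\|\u\|_2^4\le|\Ocal|\|\u\|_4^4$. Taking $\gamma$ to be the minimum of the two resulting absorption rates produces the constant $\min\bigl\{\tfrac{\beta}{4L|\Ocal|^{1/2}},\tfrac{1}{8K}(\tfrac{\mu\lambda_1}{2}+2\alpha)\bigr\}$ appearing in \eqref{eqn-Prob-Est-varpi=3}, and the exponential martingale inequality closes the case.

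The case $L=0$ is simpler and coincides structurally with Lemma \ref{lem-prob-esti-1}, Case 1: there is no drift to absorb, so $\widehat{\Mrm}=K+\tfrac{\|\f\|_{\Vbb^*}^2}{\mu}$, the quadratic variation reduces to $[\mathcal{M}](t)\le 8K\int_0^t\|\u\|_2^2\drm s$, and this is dominated by $(\tfrac{\mu\lambda_1}{2}+2\alpha)\int_0^t\|\u\|_2^2\drm s$ alone, with the $\beta\|\u\|_4^4$ dissipation simply discarded. This yields the rate $\tfrac{1}{8K}(\tfrac{\mu\lambda_1}{2}+2\alpha)$, after which the same exponential martingale inequality applies.

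I expect the main obstacle to be the bookkeeping in the $L>0$ case, where the single term $\beta\|\u\|_4^4$ must \emph{simultaneously} absorb the noise drift $L\|\u\|_2^2$ and the quartic quadratic-variation contribution $L\|\u\|_2^4$; one must track the residual carefully so that a strictly positive amount of $\beta\|\u\|_4^4$ survives for the martingale estimate, and then pin down the sharp constant in the minimum. This is exactly the mechanism that is unavailable in the subcritical regime of Lemma \ref{lem-prob-esti-1}: there $\|\u\|_{\varpi+1}^{\varpi+1}$ with $\varpi<3$ has too low a power to dominate $\|\u\|_2^4$, which forces the weaker polynomial-in-$T$ tail, whereas at the critical exponent $\varpi=3$ the powers match exactly and exponential decay is recovered even for $L>0$.
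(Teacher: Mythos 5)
Your proposal is correct and follows essentially the same route as the paper's proof: starting from \eqref{PrEs1}--\eqref{PrEs2} at $\varpi=3$, absorbing the quartic part of the quadratic variation into the surviving $\beta\|\u\|_4^4$ dissipation and the quadratic part into the Poincar\'e term via \eqref{poin}, and concluding with the exponential martingale inequality applied to $\mathcal{M}(t)-\gamma[\mathcal{M}](t)$ with $\gamma$ the minimum of the two absorption rates. One caveat on constants: your (correct) H\"older bound $\|\u\|_2^4\le|\Ocal|\,\|\u\|_4^4$ together with the residual dissipation $\beta\int_0^t\|\u(s)\|_4^4\,\drm s$ left after absorbing the drift actually yields the rate $\frac{\beta}{8L|\Ocal|}$ rather than the stated $\frac{\beta}{4L|\Ocal|^{1/2}}$; the paper reaches the stated constant only because its bound on $[\mathcal{M}](t)$ uses $|\Ocal|^{1/2}$ where H\"older gives $|\Ocal|$ and because its bookkeeping retains the full $2\beta\|\u\|_4^4$, so this mismatch is inherited from the paper and does not reflect a flaw in your mechanism.
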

\begin{proof}
	From \eqref{PrEs1} and \eqref{PrEs2}, we have 
	\begin{align*}
		& \|\u(t)\|^{2}_{2} + \int_{0}^{t}\left(\mu\|\u(s)\|^2_{\Vbb}+2\alpha\|\u(s)\|^2_{2}+2\beta\|\u(s)\|^{4}_{4}\right)\drm s
		  \leq \|\u_0\|^{2}_{2}  + \widehat{\Mrm}t  
		+   \mathcal{M}(t)
	\end{align*}
with
\begin{align*}
	[\mathcal{M}](t) & \leq  8 \int_{0}^{t}  \left[L\|\u(s)\|_2^4  + K\|\u(s)\|_2^2 \right] \drm s  \leq  8L |\Ocal|^{\frac12} \int_{0}^{t} \|\u(s)\|_4^4 \drm s + 8K \int_0^t \|\u(s)\|_2^2  \drm s .
\end{align*}
This helps us to get 
\begin{align*}
	 \|\u(t)\|^{2}_{2} +  \frac{\mu}{2} \int_{0}^{t}\|\u(s)\|^2_{\Vbb} \drm s -\|\u_0\|^{2}_{2} - \widehat{\Mrm} t
	& \leq    \mathcal{M}(t) -   \int_{0}^{t}\left(\frac{\mu}{2}\|\u(s)\|^2_{\Vbb}+2\alpha\|\u(s)\|^2_{2}+2\beta\|\u(s)\|^{4}_{4}\right)\drm s
	\nonumber\\ 
	& \leq    \mathcal{M}(t) -  \left(\frac{\mu\lambda_1}{2} + 2\alpha\right)  \int_{0}^{t}\|\u(s)\|^2_{2}\drm s  - 2\beta \int_{0}^{t}\|\u(s)\|^{4}_{4} \drm s
	\nonumber\\ 
	& \leq    \begin{cases}
		\mathcal{M}(t) - \frac{1}{8K} \left(\frac{\mu\lambda_1}{2} + 2\alpha\right)[\mathcal{M}](t), & \text{ for } L=0;\\
		\mathcal{M}(t) - \min \left\{ \frac{2\beta}{8L |\Ocal|^{\frac12}}, \frac{1}{8K} \left(\frac{\mu\lambda_1}{2} + 2\alpha\right)\right\} [\mathcal{M}](t), & \text{ for } L>0,
	\end{cases}
\end{align*}
which implies by the exponential martingale inequality \cite[Proposition 3.1]{Glatt-Holtz_2014_Arxiv} that \eqref{eqn-Prob-Est-varpi=3} holds.
This completes the proof.
\end{proof}

\section{Continuous data assimilation: 2D Subcritical SCBFEs}\label{Sec5}

For $d=2$ and $\varpi = 1$, the result is expected to coincide with that of \cite{BFLZ_2025_Arxiv}; hence, we consider only the case $\varpi >1$.  

The convergence analysis is carried out in expectation, so that the convergence results hold in the mean-square sense. Under suitable conditions on the observational resolution $\h$ and the nudging parameter $\sigma$, we prove that
\[
\mathbb{E}\bigl[\|\u(t)-\Urm(t)\|_2^2\bigr] \to 0
\quad \text{as } t \to +\infty.
\]
For $1 < \varpi < 3$, this convergence is exponentially fast in the case of additive noise, whereas in the multiplicative noise setting it is polynomial of arbitrary order $p \in (0,+\infty)$. In the context of data assimilation, the estimate $\mathbb{E}\bigl[\|\u(t)-\Urm(t)\|_2^2\bigr]$ is commonly referred to as a Foias-Prodi estimate in expectation. This estimate shows that the control (nudging) term $\sigma\,\Rrm_{\h}\bigl(\Urm(t)-\u(t)\bigr)$, when the parameters $\sigma$ and $\h$ are chosen appropriately, enforces synchronization in expectation between the solution $\u$ of \eqref{1} and the solution $\Urm$ of the corresponding data assimilation system as $t \to +\infty$.

We initially derive a result for a general stopping time, postponing the selection of a suitable one to a later stage.

\begin{lemma}\label{lem-Difference-1}
	Assume that $\Grm$ satisfies Hypothesis \ref{hyp-noise} and $\Rrm_{\h}$ satisfies \eqref{eqn-data-inter}. Suppose that $\u$ and $\Urm$ are the solutions to systems \eqref{SCBFE} and \eqref{SCBFE-CDA}, respectively. If
	\begin{equation*}
		0<\sigma \leq \frac{  \mu }{c_0 \h^2 },
	\end{equation*}
	where $c_0$ is the constant appearing in \eqref{eqn-data-inter}, then, for any $\delta\geq 0$, any stopping time $\tau$ and any $t \geq 0$, the following estimate is satisfied:
	\begin{align}\label{eqn-Difference-1}
		\mathbb{E}\left[e^{\left(\frac{2\alpha + \sigma}{1+\delta} - L \right)(t\wedge \tau) - \frac{1}{\mu} \int_0^{t \wedge \tau}\|\u(s)\|^2_{\Vbb}\, \drm s}\left\| \u(t\wedge \tau)-\Urm(t\wedge \tau) 
		\right\|_{2}^2\right] 
		&\leq \left\|\u_0 - \Urm_0 \right\|_{2}^2.
	\end{align}     
\end{lemma}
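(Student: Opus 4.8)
The plan is to follow the standard nudging/Foias--Prodi strategy, but weighted by the random exponential factor appearing in \eqref{eqn-Difference-1}. Setting $\w:=\u-\Urm$ and subtracting \eqref{SCBFE-CDA} from \eqref{SCBFE}, the linearity of $\Rrm_{\h}$ (so that $\Rrm_{\h}(\Urm-\u)=-\Rrm_{\h}(\w)$) gives
\begin{align*}
	\drm\w + \mu\Acal\w\,\drm t + \left[\B(\u,\u)-\B(\Urm,\Urm)\right]\drm t + \alpha\w\,\drm t + \beta\left[\K(\u)-\K(\Urm)\right]\drm t + \sigma\Pcal_{\frac{\varpi+1}{\varpi}}\Rrm_{\h}(\w)\,\drm t = \left[\Grm(t,\u)-\Grm(t,\Urm)\right]\drm\Wrm.
\end{align*}
Writing $\Phi(t):=\left(\frac{2\alpha+\sigma}{1+\delta}-L\right)t-\frac1\mu\int_0^t\|\u(s)\|_{\Vbb}^2\,\drm s$, I would apply the It\^o formula to $e^{\Phi(t)}\|\w(t)\|_2^2$. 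Since $\Phi$ has finite variation, it contributes only the factor $\Phi'(t)=\left(\frac{2\alpha+\sigma}{1+\delta}-L\right)-\frac1\mu\|\u(t)\|_{\Vbb}^2$ multiplying $\|\w\|_2^2$, and the objective reduces to showing that the total drift of $e^{\Phi}\|\w\|_2^2$ is pointwise nonpositive, so that the process is a (local) supermartingale and \eqref{eqn-Difference-1} follows upon taking expectations.

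I would first dispatch the benign terms. The viscosity yields $-2\mu\|\w\|_{\Vbb}^2$ via $\langle\w,\Acal\w\rangle=\|\w\|_{\Vbb}^2$; the nonlinear damping contributes $-2\beta\langle\w,\K(\u)-\K(\Urm)\rangle\le 0$ by the monotonicity bound \eqref{2.23} and is simply discarded; and the It\^o correction obeys $\|\Grm(t,\u)-\Grm(t,\Urm)\|_{\mathcal{L}_{\Qrm}}^2\le L\|\w\|_2^2$ by the Lipschitz condition (H.3), which precisely cancels the $-L\|\w\|_2^2$ produced by $\Phi'$. For the nudging term I would use $(\w,\Pcal_{\frac{\varpi+1}{\varpi}}\Rrm_{\h}\w)=(\w,\Rrm_{\h}\w)=\|\w\|_2^2-(\w,\w-\Rrm_{\h}\w)$ and then Young's inequality together with the approximation property \eqref{eqn-data-inter} and the standing hypothesis $\sigma c_0\h^2\le\mu$:
\begin{align*}
	-2\sigma(\w,\Pcal_{\frac{\varpi+1}{\varpi}}\Rrm_{\h}\w) \le -2\sigma\|\w\|_2^2 + \sigma\|\w\|_2^2 + \sigma c_0\h^2\|\w\|_{\Vbb}^2 \le -\sigma\|\w\|_2^2 + \mu\|\w\|_{\Vbb}^2,
\end{align*}
so the nudging spends one unit $\mu\|\w\|_{\Vbb}^2$ of dissipation and delivers the decisive gain $-\sigma\|\w\|_2^2$.

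The essential step is the convective term. Expanding $\B(\u,\u)-\B(\Urm,\Urm)=\B(\w,\u)+\B(\Urm,\w)$ and using $b(\Urm,\w,\w)=0$ from \eqref{b0}, the only surviving contribution is $-2b(\w,\u,\w)$. In two dimensions I would control it with H\"older's inequality, the Ladyzhenskaya inequality $\|\w\|_{4}^2\le C\|\w\|_2\|\w\|_{\Vbb}$, and Young's inequality,
\begin{align*}
	2|b(\w,\u,\w)| \le 2\|\w\|_{4}^2\|\u\|_{\Vbb} \le \mu\|\w\|_{\Vbb}^2 + \frac1\mu\|\u\|_{\Vbb}^2\|\w\|_2^2,
\end{align*}
which consumes the remaining unit $\mu\|\w\|_{\Vbb}^2$ and leaves exactly the term $\frac1\mu\|\u\|_{\Vbb}^2\|\w\|_2^2$ annihilated by the matching contribution in $\Phi'$. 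Collecting all estimates, the $\|\w\|_{\Vbb}^2$ terms cancel ($-2\mu+\mu+\mu=0$), the weighted terms $\frac1\mu\|\u\|_{\Vbb}^2\|\w\|_2^2$ cancel, and the zeroth-order terms collapse to $\left(\frac{2\alpha+\sigma}{1+\delta}-2\alpha-\sigma\right)\|\w\|_2^2=-(2\alpha+\sigma)\frac{\delta}{1+\delta}\|\w\|_2^2\le 0$ because $\delta\ge 0$. Hence the drift is nonpositive.

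To finish rigorously I would localize: choosing stopping times $\tau_n\uparrow\infty$ (exit times for $\|\w\|_2$ and for $\int_0^{\cdot}\|\u\|_{\Vbb}^2\,\drm s$, available from the regularity of $\u,\Urm$) turns the stochastic integral into a true martingale, so that taking expectations over $[0,t\wedge\tau\wedge\tau_n]$ and invoking the nonpositivity of the drift gives $\Ebb\big[e^{\Phi(t\wedge\tau\wedge\tau_n)}\|\w(t\wedge\tau\wedge\tau_n)\|_2^2\big]\le\|\u_0-\Urm_0\|_2^2$; Fatou's lemma (the integrand being nonnegative) then lets me send $n\to\infty$ to recover \eqref{eqn-Difference-1}. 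The main obstacle is the convective estimate together with the dissipation bookkeeping: the total budget $2\mu\|\w\|_{\Vbb}^2$ must be split so that the nudging term (forced to absorb a full $\mu$ by the sharp constraint $\sigma c_0\h^2\le\mu$) and the Ladyzhenskaya bound on $b(\w,\u,\w)$ together leave precisely the coefficient $\frac1\mu$ in front of $\|\u\|_{\Vbb}^2\|\w\|_2^2$; tracking the interpolation constant so that the exponent in $\Phi$ comes out exactly as stated is the delicate point. A secondary technicality is justifying the martingale property under the random, possibly unbounded, weight $e^{\Phi}$, which the localization above handles.
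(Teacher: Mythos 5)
Your proposal follows essentially the same route as the paper's proof: It\^o's formula for $\|\w\|_2^2$ with $b(\Urm,\w,\w)=0$, the monotonicity bound \eqref{2.23} to discard the damping difference, (H.3) to cancel the It\^o correction against the $-L$ in the exponent, the splitting $(\w,\Rrm_{\h}\w)=\|\w\|_2^2+(\w,\Rrm_{\h}\w-\w)$ combined with \eqref{eqn-data-inter} and $\sigma c_0\h^2\leq\mu$ for the nudging term, Ladyzhenskaya--Young for $b(\w,\u,\w)$, and finally It\^o's formula for the exponentially weighted process plus localization and expectations. Your sign bookkeeping (the drift collapsing to $-\tfrac{\delta(2\alpha+\sigma)}{1+\delta}\|\w\|_2^2\leq 0$) matches the paper's \eqref{Diff-6} exactly, and your localization/Fatou step is a more careful version of the paper's "the stochastic integral is a (local) martingale."

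The one genuine discrepancy is precisely the point you flagged as delicate. With the Ladyzhenskaya constant the paper uses ($\|\w\|_4^2\leq\sqrt{2}\,\|\w\|_2\|\w\|_{\Vbb}$, visible in the factor $2^{3/2}$ of \eqref{Diff-4}), Young's inequality yields
\begin{align*}
2|b(\w,\u,\w)|\leq \mu\|\w\|_{\Vbb}^2+\frac{2}{\mu}\|\u\|_{\Vbb}^2\|\w\|_2^2 ,
\end{align*}
not the coefficient $\tfrac1\mu$ you wrote. Accordingly, the paper's own proof takes the weight $\varrho$ in \eqref{eqn-varrho} with $\tfrac{2}{\mu}\int_0^t\|\u(s)\|_{\Vbb}^2\,\drm s$, i.e.\ it actually proves the estimate with $\tfrac{2}{\mu}$ in the exponent, and everything downstream (notably the stopping time \eqref{eqn-stopping-time}) is built on the $\tfrac{2}{\mu}$ version; the $\tfrac{1}{\mu}$ appearing in the statement \eqref{eqn-Difference-1} is an internal inconsistency of the paper. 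Be aware that the two are not interchangeable: since $e^{-\frac{2}{\mu}X}\leq e^{-\frac{1}{\mu}X}$ for $X\geq0$, the $\tfrac{2}{\mu}$ bound does \emph{not} imply the stated $\tfrac{1}{\mu}$ bound. So as written, your Young step silently assumes a Ladyzhenskaya constant $C\leq1$; that happens to hold for the sharp constant in two dimensions, but you would have to invoke and justify it explicitly, whereas the generic ``$C$'' you quoted gives $\tfrac{C^2}{\mu}$. The cleaner fix is simply to carry $\tfrac{2}{\mu}$ through the exponent, as the paper's proof and its subsequent applications do.
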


\begin{proof}
Let us define	$\w(\cdot):=\Urm(\cdot)-\u(\cdot)$, then $\w(\cdot)$ satisfies
	\begin{equation*}
	\left\{
		\begin{aligned}
			\drm \w(t) + \big[\mu \Acal\w(t) + \alpha \w(t)  &    + \left( \B(\Urm(t),\w(t)) + \B(\w(t),\u(t)) \right)  + \beta\left( \K(\Urm(t)) - \K(\u(t)) \right)  \big]\, \drm t  \\
			& = - \sigma \Pcal_{\frac{\varpi+1}{\varpi}} \Rrm_{\h}(\w(t)) + [\Grm(\Urm(t))-\Grm(\u(t))]\, \drm\Wrm(t), \;\;\;\;\;\;\;\;\;\;\;\;\;\;\;\;\;\;\; t>0  \\ 
			\w(0)& =\Urm_0 - \u_0.
		\end{aligned}
	\right.
	\end{equation*}
	Let us apply the It\^o formula to the process $\|\w(\cdot)\|_2^2$, use $\eqref{b0}_1$ and obtain $\mathbb{P}$-a.s.
	\begin{align}\label{Diff-1}
		& \|\w(t)\|^2_2 + 2\mu \int_0^t \|\w(s)\|^2_{\Vbb}\,\drm s  + 2\alpha \int_0^t \|\w(s)\|^2_2 \,\drm s + 2 \beta \int_0^t \left\langle\w(s) , \K(\Urm(s)) - \K(\u(s)) \right\rangle \drm s
		\nonumber\\
		&  =  \|\w(0)\|^2_2 - 2 \int_0^t \big[\left\langle \w (s), \B(\w(s), \u(s)) \right\rangle   + \sigma \left( \w (s),  \Rrm_{\h}(\w(s)) \right)\big] \drm s  
		\nonumber\\ 
		& \quad + \int_0^t \|\Grm(s,\Urm(s))-\Grm(s,\u(s))\|^2_{\mathcal{L}_{\Qrm}} \drm s  +  2 \int_0^t \big( \w (s), [\Grm(s,\Urm(s))-\Grm(s,\u(s))]\, \drm\Wrm(s) \big)
		\nonumber\\
		&  = \|\w(0)\|^2_2 - 2\sigma  \int_0^t \|\w(s)\|^2_2 \,\drm s - 2 \int_0^t \big[ b( \w(s), \u(s), \w (s) )    + \sigma \left( \w (s),  \Rrm_{\h}(\w(s)) - \w(s) \right)\big] \drm s  
		\nonumber\\ 
		& \quad + \int_0^t \|\Grm(s,\Urm(s))-\Grm(s,\u(s))\|^2_{\mathcal{L}_{\Qrm}} \drm s  +  2 \int_0^t \big( \w (s), [\Grm(s, \Urm(s))-\Grm(s, \u(s))]\, \drm\Wrm(s) \big).
	\end{align}
	We write from \eqref{2.23} and Hypothesis \ref{hyp-noise}, respectively, that
	\begin{align}\label{Diff-2}
		& \beta \||\Urm|(\Urm-\u)\|_{2}^2  +  \beta \||\u|(\Urm-\u)\|_{2}^2 \leq  2\beta \langle\Urm-\u, \K(\Urm)-\K(\u)\rangle ,
	\end{align}
	and
	\begin{align}\label{Diff-3}
		\|\Grm(s,\Urm)-\Grm(s,\u)\|^2_{\mathcal{L}_{\Qrm}} \leq L \|\w\|_2^2.
	\end{align}
Using \eqref{eqn-data-inter}, and H\"older's, Ladyzhenskaya's  and Young's inequalities, we get 
	\begin{align}\label{Diff-4}
		2\left|   b( \w, \u, \w )   + \sigma \left( \w,  \Rrm_{\h}(\w) - \w \right)  \right| 
		& \leq  2\left|   b( \w, \u, \w )  \right|  + 2 \left| \sigma \left( \w, \Rrm_{\h}(\w) - \w \right)  \right| 
		\nonumber\\ 
		&  \leq   2 \|\u\|_{\Vbb} \|\w\|_4^2  + 2  \sigma \| \w\|_2 \| \Rrm_{\h}(\w) - \w\|_2 
		\nonumber\\ 
		&  \leq    2^{\frac{3}{2}} \|\u\|_{\Vbb} \|\w\|_2 \|\w\|_{\Vbb}  +   \sigma \| \w\|^2_2 + \sigma \| \Rrm_{\h}(\w) - \w\|^2_2 
		\nonumber\\ 
		&  \leq  \mu \|\w\|^2_{\Vbb} +  \frac{2}{\mu} \|\u\|_{\Vbb}^2 \|\w\|_2^2 +   \sigma \| \w\|^2_2 + \sigma c_0 \h^2 \|\w\|^2_{\Vbb} .
	\end{align}

	A combination of \eqref{Diff-1}-\eqref{Diff-4} gives us the following:
	\begin{align}\label{Diff-5}
		& \|\w(t)\|^2_2 + (\mu-\sigma c_0 \h^2) \int_0^t \|\w(s)\|^2_{\Vbb}\,\drm s  +  \int_0^t  \left(2\alpha+\sigma - \frac{2}{\mu} \|\u(s)\|_{\Vbb}^2  - L\right) \|\w(s)\|^2_2 \,\drm s 
		\nonumber\\
		&  \leq  \|\w(0)\|^2_2 + 2 \int_0^t \big( \w (s), [\Grm(\Urm(s))-\Grm(\u(s))]\, \drm\Wrm(s) \big).	
	\end{align}
	
	For $0<\sigma \le \frac{  \mu }{c_0 \h^2 }$,	let us apply the  It\^o formula to the process $e^{\varrho(\cdot)}\|\w(\cdot)\|_{2}^2$, where 
	\begin{align}\label{eqn-varrho}
		\varrho(t)= \left(\frac{2\alpha+\sigma}{1+\delta} -  L \right)t  -  \frac{2}{\mu}\int_0^t\|\u(s)\|_{\Vbb}^2\d s\ \text{ so that }\ \varrho'(t)= \frac{2\alpha+\sigma}{1+\delta} -  L - \frac{2}{\mu}\|\u(t)\|_{\Vbb}^2, \ \text{ for a.e. } t,
	\end{align} 
	to obtain, $\Pbb$-a.s.,
	\begin{align}\label{Diff-6}
		& e^{\varrho(t)} \|\w(t)\|^2_2  + \frac{(2\alpha+\sigma)\delta}{1+\delta} \int_{0}^{t} e^{\varrho(s)} \|\w(s)\|^2_2 \drm s
	 \leq \|\w(0)\|^2_2 + 2 \int_0^t e^{\varrho(s)}  \big( \w (s), [\Grm(\Urm(s))-\Grm(\u(s))]\, \drm\Wrm(s) \big).		
	\end{align}
Taking expectations on both sides of \eqref{Diff-6} and noting that the stochastic integral is a (local) martingale, yield
	\begin{align*}
		& \Ebb \left[e^{\varrho(t\land \tau)} \|\w(t\land \tau)\|^2_2 \right] 
		\leq  \|\w(0)\|^2_2,
	\end{align*}
for any $\delta\geq 0$, any stopping time $\tau$ and any $t \geq 0$. This completes the proof.	
\end{proof}

To control the integrating factor in \eqref{eqn-Difference-1}, we introduce a suitable stopping time. For $R$, $\uppi$, $\delta$, $\sigma>0,$ define
\begin{align}\label{eqn-stopping-time}
	\uptau_{R,\uppi,\delta,\sigma}:= \inf \left\{ s\geq 0: \frac{2}{\mu}\int_0^s\|\u(t)\|^2_{\Vbb} \drm t + \left(L - \frac{2\alpha+\sigma}{(1+\delta)^2}\right)s - \uppi \geq R \right\},
\end{align}
and $\uptau_{R,\uppi,\delta,\sigma}=+\infty$ is the set is empty, that is, if 
\begin{align*}
	\frac{2}{\mu}\int_0^s\|\u(t)\|^2_{\Vbb} \drm t + \left(L - \frac{2\alpha+\sigma}{(1+\delta)^2}\right)s - \uppi < R, \;\;\; \text{ for all } s\geq 0.
\end{align*}
The parameter $\uppi$ is introduced to keep track of the dependence on the initial data $\u_0$, the external forcing $\f$, and other related constants throughout the subsequent analysis. 

Combining the definition of the stopping time $\uptau_{R,\uppi,\delta,\sigma}$ with Lemma \ref{lem-Difference-1}, we arrive immediately at the following result.
\begin{corollary}\label{cor-stop-time}
	Under the Hypothesis of Lemma \ref{lem-Difference-1}, for any $\u_0$, $\Urm_0\in\Hbb$ and any $R$, $\uppi$, $\delta>0$, we get
	\begin{align*}
		\Ebb\left[ \mathbf{1}_{(\uptau_{R,\uppi,\delta,\sigma}=+\infty)}  \|\u(t)-\Urm(t)\|^2_{2} \right] \leq e^{R+\uppi - \frac{\delta(2\alpha+\sigma)}{(1+\delta)^2}t} \|\u_0-\Urm_0\|^2_2,
	\end{align*}
where $\mathbf{1}_{\cdot}$ represent the characteristic function.
\end{corollary}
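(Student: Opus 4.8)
The plan is to specialize Lemma \ref{lem-Difference-1} to the particular stopping time $\tau=\uptau_{R,\uppi,\delta,\sigma}$ from \eqref{eqn-stopping-time} and then exploit the definition of that stopping time on the event $\{\uptau_{R,\uppi,\delta,\sigma}=+\infty\}$. Denote by $\varrho$ the integrating factor built in the proof of Lemma \ref{lem-Difference-1}, namely $\varrho(t)=\bigl(\tfrac{2\alpha+\sigma}{1+\delta}-L\bigr)t-\tfrac{2}{\mu}\int_0^t\|\u(s)\|_{\Vbb}^2\,\drm s$, so that the lemma reads $\Ebb\bigl[e^{\varrho(t\wedge\uptau)}\|\u(t\wedge\uptau)-\Urm(t\wedge\uptau)\|_2^2\bigr]\le\|\u_0-\Urm_0\|_2^2$ for every $t\ge 0$.

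First I would translate the event $\{\uptau_{R,\uppi,\delta,\sigma}=+\infty\}$ into a pointwise bound on $\varrho$. On this event $t\wedge\uptau_{R,\uppi,\delta,\sigma}=t$, and by the very definition \eqref{eqn-stopping-time} of the stopping time the strict inequality $\tfrac{2}{\mu}\int_0^t\|\u(s)\|_{\Vbb}^2\,\drm s < R+\uppi+\bigl(\tfrac{2\alpha+\sigma}{(1+\delta)^2}-L\bigr)t$ holds for all $t\ge 0$. Substituting this into $\varrho(t)$ and collecting the terms linear in $t$ gives
\[
\varrho(t) > \Bigl(\frac{2\alpha+\sigma}{1+\delta}-\frac{2\alpha+\sigma}{(1+\delta)^2}\Bigr)t - R - \uppi = \frac{\delta(2\alpha+\sigma)}{(1+\delta)^2}\,t - R - \uppi,
\]
the last equality using $\tfrac{1}{1+\delta}-\tfrac{1}{(1+\delta)^2}=\tfrac{\delta}{(1+\delta)^2}$. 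This is the crux of the argument, and it clarifies the role of the squared factor $(1+\delta)^2$ in \eqref{eqn-stopping-time}: it is placed there precisely so that the $L$-contributions cancel and the remaining linear term carries the clean positive rate $\tfrac{\delta(2\alpha+\sigma)}{(1+\delta)^2}$.

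Second, since the integrand $e^{\varrho(t\wedge\uptau)}\|\u(t\wedge\uptau)-\Urm(t\wedge\uptau)\|_2^2$ is nonnegative, I would restrict the expectation in Lemma \ref{lem-Difference-1} to the event $\{\uptau_{R,\uppi,\delta,\sigma}=+\infty\}$ (on which $t\wedge\uptau=t$), insert the pointwise lower bound for $e^{\varrho(t)}$ just obtained, and pull the deterministic factor $e^{\frac{\delta(2\alpha+\sigma)}{(1+\delta)^2}t-R-\uppi}$ outside the expectation, to get
\[
e^{\frac{\delta(2\alpha+\sigma)}{(1+\delta)^2}t-R-\uppi}\,\Ebb\bigl[\mathbf{1}_{(\uptau_{R,\uppi,\delta,\sigma}=+\infty)}\|\u(t)-\Urm(t)\|_2^2\bigr] \le \|\u_0-\Urm_0\|_2^2.
\]
Dividing through by the strictly positive prefactor yields exactly the claimed estimate.

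Given Lemma \ref{lem-Difference-1}, the argument is essentially immediate and involves no genuine obstacle beyond careful bookkeeping. The single point that needs attention is the consistency of the coefficient $\tfrac{2}{\mu}$ between the integrating factor $\varrho$ and the stopping time \eqref{eqn-stopping-time}: one must work with the version of $\varrho$ actually constructed in the proof of Lemma \ref{lem-Difference-1} (with $\tfrac{2}{\mu}$, as matched to \eqref{eqn-stopping-time}), so that the linear-in-$t$ cancellation produces precisely the rate $\tfrac{\delta(2\alpha+\sigma)}{(1+\delta)^2}$ recorded in the corollary.
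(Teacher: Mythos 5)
Your proof is correct and follows the route the paper intends (the paper itself only defers to the analogous Corollary 4.4 of \cite{BFLZ_2025_Arxiv}): apply Lemma \ref{lem-Difference-1} with $\tau=\uptau_{R,\uppi,\delta,\sigma}$, restrict the nonnegative integrand to the event $\{\uptau_{R,\uppi,\delta,\sigma}=+\infty\}$, and use the definition \eqref{eqn-stopping-time} to bound $\varrho(t)$ below by $\tfrac{\delta(2\alpha+\sigma)}{(1+\delta)^2}t-R-\uppi$. You also rightly flag that one must use the $\tfrac{2}{\mu}$ integrating factor actually constructed in \eqref{eqn-varrho} (the $\tfrac{1}{\mu}$ in the displayed statement of Lemma \ref{lem-Difference-1} is a typo), which is precisely what makes the linear terms cancel to the stated rate.
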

\begin{proof}
	The argument is analogous to the proof of \cite[Corollary 4.4]{BFLZ_2025_Arxiv}.
\end{proof}

In the result below, we derive, for appropriately chosen parameters $\uppi$, $\delta$, and $\sigma$, an estimate of $\Pbb(\uptau_{R,\uppi,\delta,\sigma}<+\infty)$ depending on the parameter $R$.

\begin{proposition}\label{prop-prob-esti-1}
	Let $R>0$ and $\delta>0$ be arbitrary given, and Hypothesis \ref{hyp-noise} is satisfied. For the solution of system \eqref{SCBFE} with initial data $\u_0$, we consider the stopping time $\uptau_{R,\uppi,\delta,\sigma}$ defined in \eqref{eqn-stopping-time}. Then, 
	\begin{itemize}
		\item [(i)] For $L=0$, if 
		\begin{align}\label{pi-1}
			\uppi  \geq  \frac{4}{\mu^2} \|\u_0\|^2_2
		\end{align} 
		and 
		\begin{align}\label{sigma-1}
		2\alpha+\sigma \geq (1+\delta)^2 	\frac{4}{\mu^2}	\left[K  + \frac{\|\f\|_{\Vbb^*}^2 }{\mu}\right]    
		\end{align} 
		then
		\begin{align}\label{Prob-stopping-time-1}
			\Pbb(\uptau_{R,\uppi,\delta,\sigma}<+\infty) \leq  e^{- \frac{\mu^2}{32K} \left(\frac{\mu\lambda_1}{2} + 2\alpha\right) R}
		\end{align} 
		\item [(ii)]  For $L>0$, if 
		\begin{align}\label{pi-2}
				\uppi  \geq  \frac{2}{\mu^2} \left[\|\u_0\|^2_2 + 2\right]
		\end{align} 
		and 
		\begin{align}\label{sigma-2}
		2\alpha+\sigma \geq (1+\delta)^2 	\left\{\frac{2}{\mu^2}	\left[\widehat{\Mrm}+1\right]  + L\right\} 
		\end{align} 
		then
		\begin{align}\label{Prob-stopping-time-2}
			\Pbb(\uptau_{R,\uppi,\delta,\sigma}<+\infty) \leq C_{p,K,L}  \left(\frac{2}{\mu^2}\right)^{p} \left[1+ \|\u_0\|^{4(p+1)}_{2} + \frac{\Mrm}{q (\mu\lambda_1+2\alpha)}\right]  \frac{1}{R^p},
		\end{align} 
	for any $p\in (0,+\infty)$, where $\Mrm, \widehat{\Mrm}, C_{p,K,L}>0$ are the constants appearing in \eqref{eqn-moments-2p-u}, \eqref{eqn-widehatM} and \eqref{eqn-Prob-Est-varpi2}, respectively.
	\end{itemize}
\end{proposition}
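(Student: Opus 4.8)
The plan is to show that the event $\{\uptau_{R,\uppi,\delta,\sigma}<+\infty\}$ is contained in one of the sup-events already controlled in Lemma \ref{lem-prob-esti-1}, and then simply read off the bound. By continuity of $s\mapsto\frac{2}{\mu}\int_0^s\|\u(t)\|^2_{\Vbb}\drm t$ and of the linear term, and since the defining quantity equals $-\uppi<R$ at $s=0$, on $\{\uptau_{R,\uppi,\delta,\sigma}<+\infty\}$ there exists some $t\geq 0$ with
\begin{align*}
	\frac{2}{\mu}\int_0^t\|\u(s)\|^2_{\Vbb}\drm s \;\geq\; R+\uppi+\left(\frac{2\alpha+\sigma}{(1+\delta)^2}-L\right)t.
\end{align*}
The idea is to multiply this inequality by a factor chosen so that $\frac{2}{\mu}\int_0^t\|\u\|^2_{\Vbb}$ is turned into exactly the viscous integral appearing in the matching case of Lemma \ref{lem-prob-esti-1}, and then to use the hypotheses on $\uppi$ and on $2\alpha+\sigma$ to absorb the initial datum and the whole linear-in-$t$ drift, so that only a rescaled threshold $R$ survives.

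For part (i) ($L=0$) I would multiply by $\tfrac{\mu^2}{4}$, converting the left-hand side into $\tfrac{\mu}{2}\int_0^t\|\u\|^2_{\Vbb}$. Discarding the non-negative terms $\|\u(t)\|^2_2$ and $2\beta\int_0^t\|\u\|^{\varpi+1}_{\varpi+1}$, the resulting inequality is implied by
\begin{align*}
	\|\u(t)\|^2_2+\int_0^t\Big(\tfrac{\mu}{2}\|\u(s)\|^2_{\Vbb}+2\beta\|\u(s)\|^{\varpi+1}_{\varpi+1}\Big)\drm s-\|\u_0\|^2_2-\Big[K+\tfrac{\|\f\|^2_{\Vbb^*}}{\mu}\Big]t\;\geq\;\tfrac{\mu^2}{4}R,
\end{align*}
precisely when $\tfrac{\mu^2}{4}\uppi\geq\|\u_0\|^2_2$ and $\tfrac{\mu^2}{4}\cdot\tfrac{2\alpha+\sigma}{(1+\delta)^2}\geq K+\tfrac{\|\f\|^2_{\Vbb^*}}{\mu}$, which are exactly \eqref{pi-1} and \eqref{sigma-1}. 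Hence $\{\uptau_{R,\uppi,\delta,\sigma}<+\infty\}$ sits inside the sup-event of \eqref{eqn-Prob-Est-varpi1} (with $T=0$) at level $\tfrac{\mu^2}{4}R$, and Lemma \ref{lem-prob-esti-1}(1) gives $\Pbb(\uptau_{R,\uppi,\delta,\sigma}<+\infty)\leq e^{-\frac{1}{8K}(\frac{\mu\lambda_1}{2}+2\alpha)\frac{\mu^2}{4}R}$, i.e.\ \eqref{Prob-stopping-time-1}.

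For part (ii) ($L>0$) the same scheme runs with multiplier $\tfrac{\mu^2}{2}$, which turns $\frac{2}{\mu}\int_0^t\|\u\|^2_{\Vbb}$ into $\mu\int_0^t\|\u\|^2_{\Vbb}$. Dropping $\|\u(t)\|^2_2$, $2\alpha\int_0^t\|\u\|^2_2$ and $2\beta\int_0^t\|\u\|^{\varpi+1}_{\varpi+1}$, and imposing $\tfrac{\mu^2}{2}\uppi\geq\|\u_0\|^2_2+2$ together with $\tfrac{\mu^2}{2}\big(\tfrac{2\alpha+\sigma}{(1+\delta)^2}-L\big)\geq\widehat{\Mrm}+1$ — namely \eqref{pi-2} and \eqref{sigma-2} — places $\{\uptau_{R,\uppi,\delta,\sigma}<+\infty\}$ inside the sup-event of \eqref{eqn-Prob-Est-varpi2} at level $\tfrac{\mu^2}{2}R$. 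Applying Lemma \ref{lem-prob-esti-1}(2) with $T=0$ and the substitution $q=2(p+1)$ (so that $\tfrac{q}{2}-1=p$ and $\|\u_0\|^{2q}_2=\|\u_0\|^{4(p+1)}_2$) yields \eqref{Prob-stopping-time-2}, the factor $(\tfrac{2}{\mu^2})^p$ coming from the rescaling $R\mapsto\tfrac{\mu^2}{2}R$; as $p\in(0,+\infty)$ is arbitrary, $q=2(p+1)$ sweeps the admissible range $(2,+\infty)$ of that lemma.

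The calculation is essentially bookkeeping, so the delicate point is not any single estimate but getting the two rescaling factors right: each must be dictated by the coefficient of $\|\u\|^2_{\Vbb}$ in the corresponding case of Lemma \ref{lem-prob-esti-1} ($\tfrac{\mu}{2}$ for $L=0$, $\mu$ for $L>0$), and one must then check that conditions \eqref{pi-1}–\eqref{sigma-1} and \eqref{pi-2}–\eqref{sigma-2} absorb the initial term \emph{and} the entire linear drift (including the extra $t+2$ in the second case) so that exactly the scaled threshold remains. The bookkeeping identity $q=2(p+1)$ is what transfers the polynomial order from the probability lemma to the stated decay in $R$.
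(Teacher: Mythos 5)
Your proof is correct and follows exactly the argument the paper intends: containment of $\{\uptau_{R,\uppi,\delta,\sigma}<+\infty\}$ in the sup-events of Lemma \ref{lem-prob-esti-1} after rescaling by $\tfrac{\mu^2}{4}$ (case $L=0$) and $\tfrac{\mu^2}{2}$ (case $L>0$), with \eqref{pi-1}--\eqref{sigma-1} and \eqref{pi-2}--\eqref{sigma-2} absorbing the initial datum and the linear drift, and the substitution $q=2(p+1)$ converting the tail exponent. The paper states this proof as ``immediate'' from Lemma \ref{lem-prob-esti-1}, and your write-up supplies precisely the bookkeeping it leaves implicit.
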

\begin{proof}
	By invoking Lemma \ref{lem-prob-esti-1} and using the assumptions on the parameters $\uppi$ and $\sigma$ given in \eqref{pi-1}–\eqref{sigma-1} and \eqref{pi-2}–\eqref{sigma-2}, the proof is immediate. We refer to \cite[Proposition 4.3]{BZ_DCDS} and \cite[Proposition 4.5]{BFLZ_2025_Arxiv} for analogous arguments.
\end{proof}

Now, we state and prove the main result of this section.

\begin{theorem}\label{MT-Subcritical}
	Assume that Hypothesis \ref{hyp-noise} is fulfilled and $\Rrm_{\h}$ satisfies \eqref{eqn-data-inter}. Suppose that $\u$  and $\Urm$ are the solutions to the systems \eqref{SCBFE} and \eqref{SCBFE-CDA}, respectively. Then
	\begin{enumerate}
		\item For $L=0$,  if  $\h$  and $\sigma$ satisfy the condition 
		\begin{align}\label{condition-on-sigma-1}
			\frac{4}{\mu^2}	\left[K  + \frac{\|\f\|_{\Vbb^*}^2 }{\mu}\right]   <  2\alpha+\sigma \leq  2 \alpha + \frac{  \mu }{c_0 \h^2 },
		\end{align}
		then
		$\mathbb{E}\left[\|\u(t)-\Urm(t)\|_2^2\right] \to 0$ exponentially fast as $t \to + \infty$.
		\\
		\item  For $L>0$, if $\h$  and $\sigma$ satisfy the condition 
		\begin{equation}\label{condition-on-sigma-2}
			\frac{2}{\mu^2}	\left[K  + \frac{\|\f\|_{\Vbb^*}^2 }{\mu} + \left\{\frac{\beta(\varpi+1)}{2}\right\}^{-\frac{2}{\varpi-1}} \left\{\frac{\varpi+1}{\varpi-1}\right\}^{-1} \left\{L \right\}^{\frac{\varpi+1}{\varpi-1}}  |\Ocal|+1\right]  + L <  2\alpha+\sigma  \leq  2 \alpha + \frac{  \mu }{c_0 \h^2 }, 
		\end{equation}
		then $\mathbb{E}\left[\|\u(t)-\Urm(t)\|_2^2\right] \to 0$ $p$-polynomially fast as $t \to + \infty$, for any power $p \in (0, +\infty)$.
	\end{enumerate}
\end{theorem}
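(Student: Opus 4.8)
My plan is to reduce everything to the three ingredients already assembled: the weighted bound of Corollary \ref{cor-stop-time}, the stopping-time probability estimate of Proposition \ref{prop-prob-esti-1}, and the uniform-in-$t$ moment bounds of Lemmas \ref{lem-pth-moments}--\ref{lem-pth-moments-Urm}. The starting point is the decomposition
\begin{align*}
\Ebb\left[\|\u(t)-\Urm(t)\|_2^2\right] = \Ebb\left[\mathbf{1}_{(\uptau_{R,\uppi,\delta,\sigma}=+\infty)}\|\u(t)-\Urm(t)\|_2^2\right] + \Ebb\left[\mathbf{1}_{(\uptau_{R,\uppi,\delta,\sigma}<+\infty)}\|\u(t)-\Urm(t)\|_2^2\right],
\end{align*}
with $\uptau_{R,\uppi,\delta,\sigma}$ as in \eqref{eqn-stopping-time}. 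I would bound the first summand directly by Corollary \ref{cor-stop-time}, and the second by Cauchy--Schwarz,
\begin{align*}
\Ebb\left[\mathbf{1}_{(\uptau_{R,\uppi,\delta,\sigma}<+\infty)}\|\u(t)-\Urm(t)\|_2^2\right] \leq \left(\Pbb(\uptau_{R,\uppi,\delta,\sigma}<+\infty)\right)^{1/2}\left(\Ebb\left[\|\u(t)-\Urm(t)\|_2^4\right]\right)^{1/2},
\end{align*}
using $\|\u-\Urm\|_2^4\leq 8(\|\u\|_2^4+\|\Urm\|_2^4)$ together with Lemmas \ref{lem-pth-moments}--\ref{lem-pth-moments-Urm} at $p=2$ to bound the fourth moment uniformly in $t$, and Proposition \ref{prop-prob-esti-1} to bound the probability.

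Next I would fix the free parameters. Because the lower bounds in \eqref{condition-on-sigma-1} and \eqref{condition-on-sigma-2} are strict and correspond exactly to \eqref{sigma-1} and \eqref{sigma-2} with $(1+\delta)^2$ replaced by $1$, I can choose $\delta>0$ small enough that the hypotheses of Proposition \ref{prop-prob-esti-1} hold; the upper bound $2\alpha+\sigma\leq 2\alpha+\mu/(c_0\h^2)$ ensures $0<\sigma\leq \mu/(c_0\h^2)$, so that Lemma \ref{lem-Difference-1} and hence Corollary \ref{cor-stop-time} are applicable. I would then take $\uppi$ at the threshold of \eqref{pi-1} (resp. \eqref{pi-2}), a constant depending only on $\u_0$ and the data. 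The decisive step is to let $R$ grow linearly, $R=\kappa t$ with $\kappa>0$: under Corollary \ref{cor-stop-time} the first summand is then at most $e^{\uppi}\|\u_0-\Urm_0\|_2^2\, e^{(\kappa-\frac{\delta(2\alpha+\sigma)}{(1+\delta)^2})t}$, which decays exponentially as soon as $0<\kappa<\frac{\delta(2\alpha+\sigma)}{(1+\delta)^2}$.

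For part (1) I would invoke Proposition \ref{prop-prob-esti-1}(i), giving $\Pbb(\uptau_{R,\uppi,\delta,\sigma}<+\infty)\leq e^{-cR}$ with $c=\frac{\mu^2}{32K}\left(\frac{\mu\lambda_1}{2}+2\alpha\right)>0$; with $R=\kappa t$ the second summand is then $O(e^{-c\kappa t/2})$, so both contributions decay exponentially and exponential convergence follows. For part (2) I would apply Proposition \ref{prop-prob-esti-1}(ii) with its exponent set to $2p$, obtaining $\Pbb(\uptau_{R,\uppi,\delta,\sigma}<+\infty)\leq C R^{-2p}$ with $C$ independent of $t$ (the dependence on $\|\u_0\|_2^{4(p+1)}$ being harmless, since all moments are finite by Lemma \ref{lem-pth-moments}); with $R=\kappa t$ this makes the second summand $O(t^{-p})$, while the first summand decays exponentially and is negligible. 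Since $p\in(0,+\infty)$ is arbitrary, this yields $p$-polynomial convergence of every order.

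The main obstacle, and the heart of the argument, is the tension in the choice of $R$: the weighted Foias--Prodi bound of Corollary \ref{cor-stop-time} improves with $t$ but worsens with $R$, whereas the stopping-time probability of Proposition \ref{prop-prob-esti-1} improves with $R$; the linear coupling $R=\kappa t$ reconciles them, and the admissible window $0<\kappa<\frac{\delta(2\alpha+\sigma)}{(1+\delta)^2}$ is nonempty precisely because the strict inequalities in \eqref{condition-on-sigma-1}--\eqref{condition-on-sigma-2} leave room to take $\delta>0$. The remaining care is merely to confirm that the uniform fourth-moment bound survives the Cauchy--Schwarz split, which it does by Lemmas \ref{lem-pth-moments}--\ref{lem-pth-moments-Urm}.
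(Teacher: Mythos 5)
Your proposal is correct and follows essentially the same route as the paper's own proof: the identical decomposition over $\{\uptau_{R,\uppi,\delta,\sigma}=+\infty\}$ versus $\{\uptau_{R,\uppi,\delta,\sigma}<+\infty\}$, Corollary \ref{cor-stop-time} for the first term, Cauchy--Schwarz with the uniform fourth-moment bounds of Lemmas \ref{lem-pth-moments}--\ref{lem-pth-moments-Urm} plus Proposition \ref{prop-prob-esti-1} for the second, and the linear coupling $R\propto t$ (the paper takes $R=\frac{\delta(2\alpha+\sigma)}{2(1+\delta)^2}t$, which lies in your admissible window for $\kappa$). Your additional remarks on choosing $\delta$ small via the strict inequalities in \eqref{condition-on-sigma-1}--\eqref{condition-on-sigma-2}, and on invoking Proposition \ref{prop-prob-esti-1}(ii) with exponent $2p$ to absorb the square root, only make explicit what the paper leaves implicit.
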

\begin{proof}
	The proof is same in both the cases but requires different estimates from the Proposition \ref{prop-prob-esti-1}. Making use of H\"older's inequality, Corollary \ref{cor-stop-time} and Lemmas \ref{lem-pth-moments}-\ref{lem-pth-moments-Urm}, we find, for any $R, \uppi, \delta>0$ and $0<\sigma \leq \frac{\mu}{c_0 \h^2 }$, 
	\begin{align}\label{MT-1-1}
		\mathbb{E} \left[ \|\u(t)-\Urm(t)\|^2_2\right]
		& = \mathbb{E} \left[\mathbf{1}_{(\uptau_{R, \uppi, \delta, \sigma =+\infty  })} \|\u(t)-\Urm(t)\|^2_2\right] + \mathbb{E} \left[\mathbf{1}_{(\uptau_{R, \uppi, \delta, \sigma <+\infty})} \|\u(t)-\Urm(t)\|^2_2\right]
		\notag \\
		& \leq e^{ R + \uppi - \frac{\delta (2\alpha+\sigma)}{(1+\delta)^2} t} \|\u_0-\Urm_0\|^2_2 + \left( \mathbb{P}(\uptau_{R, \uppi, \delta, \sigma}<+\infty)\right)^{\frac12} \left(\mathbb{E}\left[\|\u(t)-\Urm(t)\|^4_2\right] \right)^{\frac12}
		\notag\\
		& \leq
		C\left(1+\|\u_0\|^2_2 +  \|\Urm_0\|^2_2 \right)\left(\left( \mathbb{P}(\uptau_{R, \uppi, \delta, \sigma}<+\infty)\right)^{\frac12}+ e^{ R + \uppi - \frac{\delta(2\alpha+\sigma)}{(1+\delta)^2} t}\right),
	\end{align}
	where $C>0$ is a constant independent of $t$.
	
	We proceed by fixing suitable parameters $R$, $\uppi$, $\delta$, and $\sigma$, and then exploiting the previously established bounds for $\uptau_{R,\uppi,\delta,\sigma}$. From the Proposition \ref{prop-prob-esti-1}, we gain
	\begin{align}\label{MT-1-2}
		(\mathbb{P}(\uptau_{R, \uppi, \delta, \sigma}<+\infty))^{\frac12} 
		\leq 
		\begin{cases}
		e^{-C_1 R}, & \text{ for } L=0 \text{ with } \eqref{condition-on-sigma-1},\\
		C_2 R^{-\frac{p}{2}}, & \text{ for } L>0 \text{ with } \eqref{condition-on-sigma-2},
		\end{cases} 
	\end{align}
where the constants $C_1$ and $C_2$ are independent of $R$ and $t$. Now combining \eqref{MT-1-1} and \eqref{MT-1-2}, and choosing $R= \frac{\delta(2\alpha+\sigma)}{2(1+\delta)^2}t$, for each $t > 0$, we immediately complete the proof. 
\end{proof}

\begin{remark}\label{Rem-Subcritical}
	For $\varpi \in[1,3)$, assume that, instead of the linear growth condition, the noise coefficient $\Grm$ satisfies the following sublinear growth assumption:
	\begin{align*}
		\|\Grm(t,\u)\|_{\mathcal{L}_{\Qrm}}^2
		\leq K + \widetilde{K}\,\|\u\|_2^{\gamma-1},
		\qquad \text{for } \gamma \in [1,\varpi].
	\end{align*}
	Under this assumption, one can also obtain exponential convergence of
	\[
	\mathbb{E}\bigl[\|\u(t)-\Urm(t)\|_2^2\bigr] \to 0
	\quad \text{as } t \to +\infty,
	\]
	in the case $L>0$. We direct the reader to the next section, where the case $\varpi = 3$ is analyzed in detail.
\end{remark}

\section{Continuous data assimilation: 2D and 3D Critical SCBFEs}\label{Sec6}

In this section, we analyze exclusively the case $\varpi = 3$ in two and three dimensions. This case merits separate consideration, as it leads to exponential convergence of $\mathbb{E}\left[\|\u(t)-\Urm(t)\|_2^2\right] \to 0$ as $t \to +\infty$, irrespective of whether $L=0$ or $L>0$.

\begin{lemma}
		Under  Hypothesis \ref{hyp-noise} on $\Grm$ and assumption \eqref{eqn-data-inter} $\Rrm_{\h}$, suppose that $\u$ and $\Urm$ are the unique solutions of  systems \eqref{SCBFE} and \eqref{SCBFE-CDA}, respectively. If 
	\begin{itemize}
		\item  [(i)] for $d\in\{2,3\}$ and $2\beta\mu>1$, $\sigma$ and $\h$ are such that 
		\begin{align}\label{condition-on-sigma-3}
			 0 <  \sigma \leq  \frac{  1}{c_0 \h^2 }\left(2\mu-\frac{1}{\beta}\right),
		\end{align}
		where $c_0$ is the constant appearing in \eqref{eqn-data-inter}, then for any $t\geq0$
		\begin{align}\label{eqn-Difference-2}
			& \Ebb \left[ \|\u(t)-\Urm(t)\|^2_2 \right] 
			\leq  \|\u_0-\Urm_0\|^2_2  e^{-(2\alpha+\sigma  - L)t} .	
		\end{align}
		\item  [(ii)] for $d=2$, if $\sigma$ and $\h$ are such that 
		\begin{align}\label{condition-on-sigma-4}
			0  <  \sigma \leq   \frac{  \mu }{c_0 \h^2 }
		\end{align}
		where $c_0$ is the constant appearing in \eqref{eqn-data-inter}, then for any $\delta\ge 0$, any stopping time $\tau$ and any $t \ge 0$:
		\begin{align}\label{eqn-Difference-3}
			\mathbb{E}\left[e^{\left(\frac{2\alpha + \sigma}{1+\delta} - L \right)(t\wedge \tau) - \frac{1}{\mu} \int_0^{t \wedge \tau}\|\u(s)\|^2_{\Vbb}\, \drm s}\left\| \u(t\wedge \tau)-\Urm(t\wedge \tau) 
			\right\|_{2}^2\right] 
			&\leq \left\|\u_0 - \Urm_0 \right\|_{2}^2.
		\end{align}
	\end{itemize}
\end{lemma}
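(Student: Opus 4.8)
The plan is to repeat the Itô-formula computation from the proof of Lemma~\ref{lem-Difference-1}, but to exploit the monotonicity of the damping operator $\K$ in the critical case $\varpi=3$ so as to absorb the convective nonlinearity \emph{directly into the damping} instead of into the viscous dissipation with the weight $\|\u\|_{\Vbb}^2$. Setting $\w:=\Urm-\u$ and applying the Itô formula to $\|\w(\cdot)\|_2^2$ exactly as in \eqref{Diff-1}, I obtain the same identity; the damping contributes $2\beta\int_0^t\langle\w,\K(\Urm)-\K(\u)\rangle\,\drm s$, which by \eqref{Diff-2} (the case $\varpi=3$ of \eqref{2.23}) is bounded below by $\beta\int_0^t(\||\Urm|\w\|_2^2+\||\u|\w\|_2^2)\,\drm s$, and the noise term obeys \eqref{Diff-3}.

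For part (i) the decisive step is the estimate of the convective term. Using the antisymmetry $b(\w,\u,\w)=-b(\w,\w,\u)$ from \eqref{b0} and Cauchy--Schwarz I get $|b(\w,\u,\w)|\le\|\w\|_{\Vbb}\,\||\u|\w\|_2$, and then Young's inequality with the weights $\tfrac1\beta$ and $\beta$ yields
\[
2\,|b(\w,\u,\w)|\le\tfrac1\beta\|\w\|_{\Vbb}^2+\beta\||\u|\w\|_2^2 .
\]
The point is that $\beta\||\u|\w\|_2^2$ is cancelled \emph{exactly} by the corresponding part of the damping, while $\beta\||\Urm|\w\|_2^2\ge0$ is simply discarded. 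Treating the nudging term precisely as in \eqref{Diff-4}, i.e. $-2\sigma(\w,\Rrm_{\h}(\w))\le-\sigma\|\w\|_2^2+\sigma c_0\h^2\|\w\|_{\Vbb}^2$ via \eqref{eqn-data-inter}, I arrive at
\[
\|\w(t)\|_2^2+\Big(2\mu-\tfrac1\beta-\sigma c_0\h^2\Big)\!\int_0^t\!\|\w\|_{\Vbb}^2\,\drm s+(2\alpha+\sigma-L)\!\int_0^t\!\|\w\|_2^2\,\drm s\le\|\w(0)\|_2^2+2\!\int_0^t\!\big(\w,[\Grm(\Urm)-\Grm(\u)]\,\drm\Wrm\big).
\]
Condition \eqref{condition-on-sigma-3} is precisely $2\mu-\tfrac1\beta-\sigma c_0\h^2\ge0$ (which also forces $2\beta\mu>1$), so the gradient integral and the nonnegative $\||\Urm|\w\|_2^2$ term may be dropped. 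Applying the Itô product rule to $e^{(2\alpha+\sigma-L)t}\|\w(t)\|_2^2$ and taking expectations — the stochastic integral being a martingale — then yields $\Ebb[e^{(2\alpha+\sigma-L)t}\|\w(t)\|_2^2]\le\|\w(0)\|_2^2$, which is exactly \eqref{eqn-Difference-2}.

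For part (ii), in the two-dimensional setting I do not call on the damping to control the nonlinearity: the assertion \eqref{eqn-Difference-3} is literally identical to \eqref{eqn-Difference-1}, so the argument coincides with that of Lemma~\ref{lem-Difference-1}. Concretely, I would estimate $|b(\w,\u,\w)|\le\|\u\|_{\Vbb}\|\w\|_4^2$ and invoke the $2$D Ladyzhenskaya inequality $\|\w\|_4^2\le\sqrt2\,\|\w\|_2\|\w\|_{\Vbb}$ as in \eqref{Diff-4}, producing the weight $\tfrac2\mu\|\u\|_{\Vbb}^2\|\w\|_2^2$; under \eqref{condition-on-sigma-4} I absorb half of the viscosity and the interpolation error, and then run the integrating-factor computation with $\varrho$ as in \eqref{eqn-varrho}, taking expectation after the stopping time $\tau$.

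The main obstacle, and the conceptual heart of the lemma, is the convective estimate in part (i): one must notice that for $\varpi=3$ the factor $|\u|\w$ arising in $b(\w,\w,\u)$ matches \emph{exactly} the quantity $\||\u|\w\|_2^2$ supplied by the damping, so that the Young weighting $(\tfrac1\beta,\beta)$ lets the nonlinearity be absorbed with no residual lower-order term and no dependence on $\|\u\|_{\Vbb}$. This is what makes the estimate dimension-free and produces the clean exponential rate $2\alpha+\sigma-L$ in both $d=2$ and $d=3$; the remaining bookkeeping (the nudging term, the sign of the gradient coefficient under \eqref{condition-on-sigma-3}, and the final Grönwall/integrating-factor step) is routine.
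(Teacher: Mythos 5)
Your proposal is correct and follows essentially the same route as the paper's own proof: in part (i) the paper likewise uses the antisymmetry $\eqref{b0}_2$, the Young weighting $\left(\tfrac{1}{\beta},\beta\right)$ to cancel $\beta\||\u|\w\|_2^2$ against the damping term from \eqref{Diff-2}, and the nudging estimate via \eqref{eqn-data-inter}, arriving at exactly \eqref{Diff-8} before taking expectations (the paper concludes by Gronwall's inequality rather than your integrating factor, an immaterial difference), and in part (ii) the paper, like you, simply repeats the argument of Lemma \ref{lem-Difference-1}.
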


\begin{proof}
	We consider both the cases one by one.	
	\vskip2mm
	\noindent
	\textbf{\textit{Case} (i).} \textbf{For $d\in\{2,3\}$ and $2\beta\mu>1$ with \eqref{condition-on-sigma-3}}. 	Using $\eqref{b0}_2$ and \eqref{eqn-data-inter}, and H\"older's, Ladyzhenskaya's  and Young's inequalities, we get
	\begin{align}\label{Diff-7}
		 2\left|   b( \w, \u, \w )   + \sigma \left( \w, \Rrm_{\h}(\w) - \w \right)  \right| 
		& \leq  2\left|   b( \w, \w, \u )  \right|  + 2 \left| \sigma \left( \w,  \Rrm_{\h}(\w) - \w \right)  \right| 
		\nonumber\\ 
		&  \leq  \frac{1}{\beta} \|\w\|^2_{\Vbb} + \beta  \||\u|\w\|_2^2  + 2  \sigma \| \w\|_2 \| \Rrm_{\h}(\w) - \w\|_2 
		\nonumber\\ 
		&  \leq  \frac{1}{\beta} \|\w\|^2_{\Vbb} + \beta  \||\u|\w\|_2^2    +   \sigma \| \w\|^2_2 + \sigma \| \Rrm_{\h}(\w) - \w\|^2_2 
		\nonumber\\ 
		&  \leq  \frac{1}{\beta} \|\w\|^2_{\Vbb} + \beta  \||\u|\w\|_2^2    +   \sigma \| \w\|^2_2 + \sigma c_0 \h^2 \|\w\|^2_{\Vbb} 
	\end{align}	
	By combining \eqref{Diff-1}-\eqref{Diff-3} and \eqref{Diff-7}, we reach at 
		\begin{align}\label{Diff-8}
		& \|\w(t)\|^2_2 + (2\mu- \frac{1}{\beta}-\sigma c_0 \h^2) \int_0^t \|\w(s)\|^2_{\Vbb}\,\drm s  + (2\alpha+\sigma  - L) \int_0^t \|\w(s)\|^2_2 \,\drm s 
		\nonumber\\
		&  \leq  \|\w(0)\|^2_2 + 2 \int_0^t \big( \w (s), [\Grm(\Urm(s))-\Grm(\u(s))]\, \drm\Wrm(s) \big).	
	\end{align}
	
	For $0< \sigma \leq 2\alpha + \frac{  1 }{c_0 \h^2 }\left(2\mu- \frac{1}{\beta}\right)$, taking expectations on both sides of \eqref{Diff-8} and noting that the stochastic integral is a (local) martingale, yield
		\begin{align}
		& \Ebb\left[ \|\w(t)\|^2_2\right]  + (2\alpha+\sigma  - L) \int_0^t \Ebb \left[\|\w(s)\|^2_2\right] \,\drm s 
		  \leq  \|\w(0)\|^2_2 .	
	\end{align}
which immediately provides \eqref{eqn-Difference-2}. 

The proof of assertion (ii) is obtained by repeating the argument of the proof of Lemma \ref{lem-Difference-1}, and for this reason we omit the details. Hence, we complete the proof.
\end{proof}

\begin{proposition}
For $d=2$ and $\varpi=3$, let $R>0$ and $\delta>0$ be arbitrary given, and Hypothesis \ref{hyp-noise} is satisfied. For the solution of system \eqref{SCBFE} with initial data $\u_0$, we consider the stopping time $\uptau_{R,\uppi,\delta,\sigma}$ defined in \eqref{eqn-stopping-time}. If
		\begin{align}\label{pi-3}
			\uppi  \geq  \frac{4}{\mu^2} \|\u_0\|^2_2
		\end{align} 
		and 
		\begin{align}\label{sigma-3}
		2\alpha+\sigma \geq \left\{\frac{4\widehat{\Mrm}}{\mu^2}	 + L\right\} (1+\delta)^2, 
		\end{align} 
	where $\widehat{\Mrm}$ is give in \eqref{eqn-widehatM},	then
		\begin{align*}
			\Pbb(\uptau_{R,\uppi,\delta,\sigma}<+\infty)
			\leq 
			\begin{cases}
				e^{-  \frac{1}{8K} \left(\frac{\mu\lambda_1}{2} + 2\alpha\right) \frac{\mu^2R}{4}}, & \text{ for } L=0; \\
				e^{- \min \left\{ \frac{\beta}{4L |\Ocal|^{\frac12}}, \frac{1}{8K} \left(\frac{\mu\lambda_1}{2} + 2\alpha\right)\right\} \frac{\mu^2R}{4}}, & \text{ for } L>0.
			\end{cases}
		\end{align*} 
\end{proposition}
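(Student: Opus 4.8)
The plan is to reduce the claim to the critical-case probability estimate of Lemma~\ref{lem-prob-esti-2} (evaluated at $T=0$), mirroring the reduction carried out for the subcritical case in Proposition~\ref{prop-prob-esti-1}, but with Lemma~\ref{lem-prob-esti-1} replaced by Lemma~\ref{lem-prob-esti-2}. First I would observe that, by the definition \eqref{eqn-stopping-time}, the event $\{\uptau_{R,\uppi,\delta,\sigma}<+\infty\}$ is contained in
\begin{align*}
\mathfrak{F} := \left\{\sup_{s\geq 0}\left[\frac{2}{\mu}\int_0^s\|\u(t)\|_{\Vbb}^2\,\drm t + \left(L-\frac{2\alpha+\sigma}{(1+\delta)^2}\right)s - \uppi\right]\geq R\right\},
\end{align*}
so that it suffices to bound $\Pbb(\mathfrak{F})$.

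Next, the hypothesis \eqref{sigma-3} is equivalent to $L-\frac{2\alpha+\sigma}{(1+\delta)^2}\leq-\frac{4\widehat{\Mrm}}{\mu^2}$, which lets me dominate the linear drift and pass to the process with the explicit slope $-\tfrac{4\widehat{\Mrm}}{\mu^2}$. Multiplying the resulting supremand by the factor $\tfrac{\mu^2}{4}$ converts the coefficient $\tfrac{2}{\mu}$ of the dissipation integral into $\tfrac{\mu}{2}$ and the slope into $-\widehat{\Mrm}$, at the cost of rescaling the threshold $R$ to $\tfrac{\mu^2}{4}R$; this is precisely the normalization appearing on the right-hand side of the claimed bound. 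Thus I would arrive at
\begin{align*}
\Pbb(\mathfrak{F})\leq \Pbb\left\{\sup_{s\geq 0}\left[\frac{\mu}{2}\int_0^s\|\u(t)\|_{\Vbb}^2\,\drm t - \widehat{\Mrm}s - \tfrac{\mu^2}{4}\uppi\right]\geq \tfrac{\mu^2}{4}R\right\}.
\end{align*}

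Finally I would invoke \eqref{pi-3}, which gives $\tfrac{\mu^2}{4}\uppi\geq\|\u_0\|_2^2$, to replace $-\tfrac{\mu^2}{4}\uppi$ by the larger quantity $-\|\u_0\|_2^2$, and then add the nonnegative term $\|\u(s)\|_2^2$ inside the supremum; both steps only enlarge the event. The resulting probability is exactly the one estimated in Lemma~\ref{lem-prob-esti-2} with $T=0$ and $R$ replaced by $\tfrac{\mu^2}{4}R$, which yields the two claimed bounds according to whether $L=0$ or $L>0$. There is no serious analytical obstacle here, since the whole argument is a chain of set inclusions feeding into Lemma~\ref{lem-prob-esti-2}. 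The only point requiring care is bookkeeping the directions of the inequalities through the rescaling by $\tfrac{\mu^2}{4}$ and the two absorption steps, so that every manipulation enlarges rather than shrinks the event whose probability is being controlled, and so that the threshold is correctly tracked as $\tfrac{\mu^2}{4}R$ throughout.
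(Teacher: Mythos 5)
Your proposal is correct and takes essentially the same route as the paper: the paper's proof consists of the single remark that the claim follows "immediately" by invoking Lemma \ref{lem-prob-esti-2} under the parameter assumptions \eqref{pi-3}--\eqref{sigma-3}, and your chain of event inclusions — dominating the drift via \eqref{sigma-3}, rescaling by $\tfrac{\mu^2}{4}$, absorbing $\uppi$ via \eqref{pi-3}, and adding the nonnegative term $\|\u(s)\|_2^2$ — is exactly the bookkeeping that makes this invocation valid with threshold $\tfrac{\mu^2 R}{4}$ and $T=0$. No gaps; the directions of all inclusions are tracked correctly.
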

\begin{proof}
	By invoking Lemma \ref{lem-prob-esti-2} and using the assumptions on the parameters $\uppi$ and $\sigma$ given in \eqref{pi-3}–\eqref{sigma-3}, the proof is immediate.
\end{proof}

%
%
%
%
%
%
%
%
%
%
%
%
%
%
%
%
%
%

Now, we state and prove the main result of this section.

\begin{theorem}\label{MT-Critical}
	Assume that Hypothesis \ref{hyp-noise} is fulfilled and $\Rrm_{\h}$ satisfies \eqref{eqn-data-inter}. Suppose that $\u$  and $\Urm$ are the solutions to the systems \eqref{SCBFE} and \eqref{SCBFE-CDA}, respectively. Then
	\begin{enumerate}
		\item for $d\in\{2,3\}$ and $2\beta\mu>1$, $\sigma$ and $\h$ are such that 
		\begin{align}\label{condition-on-sigma-5}
			L  <  2\alpha+\sigma \leq  2\alpha +  \frac{  1}{c_0 \h^2 }\left(2\mu-\frac{1}{\beta}\right),
		\end{align}
		then $\mathbb{E}\left[\|\u(t)-\Urm(t)\|_2^2\right] \to 0$ exponentially fast as $t \to + \infty$.
		\\
		\item  for $d=2$, if $\sigma$ and $\h$ are such that 
		\begin{equation}\label{condition-on-sigma-6}
			 \frac{4}{\mu^2} \left\{K  + \frac{\|\f\|_{\Vbb^*}^2 }{\mu} + \frac{L^2|\Ocal|}{4\beta}  \right\}	 + L  <  2\alpha+\sigma  \leq  2 \alpha + \frac{  \mu }{c_0 \h^2 }, 
		\end{equation}
		then $\mathbb{E}\left[\|\u(t)-\Urm(t)\|_2^2\right] \to 0$ exponentially fast as $t \to + \infty$.
	\end{enumerate}
\end{theorem}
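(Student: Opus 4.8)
The plan is to handle the two regimes separately. Part (1) will follow almost immediately from the energy estimate \eqref{eqn-Difference-2}, whereas Part (2) will retrace the stopping-time argument used in the proof of Theorem \ref{MT-Subcritical}, now exploiting the \emph{exponential} tail bound for $\uptau_{R,\uppi,\delta,\sigma}$ supplied by the preceding Proposition. For Part (1), I observe that the upper constraint in \eqref{condition-on-sigma-5} is precisely \eqref{condition-on-sigma-3}, so \eqref{eqn-Difference-2} applies verbatim and yields
\[
\Ebb\left[\|\u(t)-\Urm(t)\|_2^2\right] \le \|\u_0-\Urm_0\|_2^2\, e^{-(2\alpha+\sigma-L)t}.
\]
The lower constraint $L<2\alpha+\sigma$ makes the exponent strictly negative, which is exactly the claimed exponential convergence with explicit rate $2\alpha+\sigma-L>0$. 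No stopping time is required here because, under $2\beta\mu>1$, the nonlinear damping (through \eqref{Diff-2}) absorbs both the bilinear term $b(\w,\w,\u)$ and the viscous contribution, so no factor $\int_0^t\|\u(s)\|_{\Vbb}^2\,\drm s$ survives in the exponent; this is the structural feature that also makes the three-dimensional case accessible.

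For Part (2) I would first exploit the strictness of \eqref{condition-on-sigma-6}. Specializing $\widehat{\Mrm}$ from \eqref{eqn-widehatM} to $\varpi=3$ gives $\widehat{\Mrm}=K+\frac{\|\f\|_{\Vbb^*}^2}{\mu}+\frac{L^2|\Ocal|}{4\beta}$, so the left inequality reads $\frac{4\widehat{\Mrm}}{\mu^2}+L<2\alpha+\sigma$; hence one can fix $\delta>0$ small enough that $(1+\delta)^2\left(\frac{4\widehat{\Mrm}}{\mu^2}+L\right)\le 2\alpha+\sigma$, that is, \eqref{sigma-3} holds, and set $\uppi=\frac{4}{\mu^2}\|\u_0\|_2^2$ so that \eqref{pi-3} holds. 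Then, exactly as in \eqref{MT-1-1}, I split $\Ebb[\|\u(t)-\Urm(t)\|_2^2]$ over $\{\uptau_{R,\uppi,\delta,\sigma}=+\infty\}$ and its complement. On $\{\uptau_{R,\uppi,\delta,\sigma}=+\infty\}$ Corollary \ref{cor-stop-time} applies, since the upper bound $\sigma\le\frac{\mu}{c_0\h^2}$ in \eqref{condition-on-sigma-6} places us in the setting of \eqref{eqn-Difference-3}, producing the factor $e^{R+\uppi-\frac{\delta(2\alpha+\sigma)}{(1+\delta)^2}t}$; on the complement, Hölder's inequality together with the uniform fourth-moment bounds of Lemmas \ref{lem-pth-moments}--\ref{lem-pth-moments-Urm} reduces matters to $\left(\Pbb(\uptau_{R,\uppi,\delta,\sigma}<+\infty)\right)^{1/2}$. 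The preceding Proposition bounds this by $e^{-cR}$ for some $c>0$ \emph{in both} cases $L=0$ and $L>0$. Choosing $R=\frac{\delta(2\alpha+\sigma)}{2(1+\delta)^2}t$ then forces both contributions to decay exponentially in $t$, completing the proof.

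The main obstacle, and the source of the improvement over the subcritical regime, is entirely packaged in the preceding Proposition, hence in Lemma \ref{lem-prob-esti-2}: for $\varpi=3$ the absorption term $2\beta\int_0^t\|\u(s)\|_4^4\,\drm s$ controls the multiplicative contribution $\sim L|\Ocal|^{1/2}\int_0^t\|\u(s)\|_4^4\,\drm s$ to the quadratic variation of the martingale, so the exponential martingale inequality delivers an exponential (rather than merely polynomial) tail for $\uptau_{R,\uppi,\delta,\sigma}$ even when $L>0$. Once that estimate is granted, the remaining steps are the same bookkeeping as in Theorem \ref{MT-Subcritical}; the only points needing care are verifying that the strict inequalities in \eqref{condition-on-sigma-6} leave room to choose $\delta$ realizing \eqref{sigma-3}, and confirming that the $\varpi=3$ specialization of $\widehat{\Mrm}$ matches the constant displayed in \eqref{condition-on-sigma-6}.
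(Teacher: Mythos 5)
Your proposal is correct and follows essentially the same route as the paper, whose proof of Theorem \ref{MT-Critical} simply refers back to the argument of Theorem \ref{MT-Subcritical}: part (1) is immediate from \eqref{eqn-Difference-2} under \eqref{condition-on-sigma-5}, and part (2) repeats the stopping-time splitting of \eqref{MT-1-1} using \eqref{eqn-Difference-3}, Corollary \ref{cor-stop-time}, and the exponential tail bound for $\uptau_{R,\uppi,\delta,\sigma}$ from the Section~\ref{Sec6} proposition (ultimately Lemma \ref{lem-prob-esti-2}). Your verification that the $\varpi=3$ specialization of $\widehat{\Mrm}$ in \eqref{eqn-widehatM} equals $K+\frac{\|\f\|_{\Vbb^*}^2}{\mu}+\frac{L^2|\Ocal|}{4\beta}$, matching \eqref{condition-on-sigma-6}, and your observation that strictness of that inequality leaves room to choose $\delta>0$ realizing \eqref{sigma-3}, are exactly the bookkeeping the paper leaves implicit.
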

\begin{proof}
See the proof of Theorem \ref{MT-Subcritical}.
\end{proof}

\section{Continuous data assimilation: 2D and 3D Supercritical SCBFEs}\label{Sec7}

In this section, we consider $\varpi>3$ and show that $\mathbb{E}\left[\|\u(t)-\Urm(t)\|_2^2\right] \to 0$ exponentially fast as $t \to +\infty$, irrespective of whether $L=0$ or $L>0$.

\begin{theorem}\label{MT-Supercritical}
	Under  Hypothesis \ref{hyp-noise} and assumption \ref{eqn-data-inter} on $\Grm$ and $\Rrm_{\h}$, let $\u$ and $\Urm$ are the unique solutions of systems \eqref{SCBFE} and \eqref{SCBFE-CDA}, respectively. If 
	\begin{itemize}
		\item  [(i)] $\sigma$ and $\h$ are such that 
		\begin{align}\label{condition-on-sigma-7}
			2 \widehat{\boldsymbol{\upvarpi}} + L < 2\alpha + \sigma \leq 2\alpha + \frac{  \mu}{c_0 \h^2 }
		\end{align}
	where $\widehat{\boldsymbol{\upvarpi}}$ and $c_0$ are the constants appearing in \eqref{eqn-upvarpi} and \eqref{eqn-data-inter}, respectively, then $\mathbb{E}\left[\|\u(t)-\Urm(t)\|_2^2\right] \to 0$ exponentially fast as $t \to + \infty$.
\\
\item  [(ii)] for $2\beta\mu>1$, $\sigma$ and $\h$ are such that 
\begin{align}\label{condition-on-sigma-8}
	\beta + L < 2\alpha + \sigma \leq 2\alpha + \frac{  1}{c_0 \h^2 }\left(2\mu-\frac{1}{\beta}\right)
\end{align}
where $c_0$ is the constant appearing in,  then $\mathbb{E}\left[\|\u(t)-\Urm(t)\|_2^2\right] \to 0$ exponentially fast as $t \to + \infty$.

	\end{itemize}
\end{theorem}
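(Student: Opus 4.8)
The plan is to exploit the strong nonlinear damping available for $\varpi>3$, so that the bilinear term is absorbed directly by the damping integral; this avoids entirely the stopping-time and Foias--Prodi machinery that was forced upon us in the subcritical regime (Lemma~\ref{lem-Difference-1} and Proposition~\ref{prop-prob-esti-1}). Setting $\w:=\Urm-\u$, the difference solves the same equation as in Lemma~\ref{lem-Difference-1}, and applying the It\^o formula to $\|\w(\cdot)\|_2^2$ together with $\eqref{b0}_1$ produces the identity \eqref{Diff-1}. From there I would treat the right-hand side term by term: the noise contribution is controlled by Hypothesis~\ref{hyp-noise}~(H.3) as in \eqref{Diff-3}; the nudging term is handled exactly as in \eqref{Diff-4}, giving $2\sigma\left|(\w,\Rrm_{\h}(\w)-\w)\right|\le\sigma\|\w\|_2^2+\sigma c_0\h^2\|\w\|_{\Vbb}^2$; and the damping term is bounded below by \eqref{2.23}, namely $2\beta\langle\w,\K(\Urm)-\K(\u)\rangle\ge\beta\||\Urm|^{\frac{\varpi-1}{2}}\w\|_2^2+\beta\||\u|^{\frac{\varpi-1}{2}}\w\|_2^2$.

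The decisive step is the bilinear estimate. Writing $b(\w,\u,\w)=-b(\w,\w,\u)$ via $\eqref{b0}_2$, for case (i) I would apply \eqref{bilinear-etimate-1} to $b(\w,\w,\u)$ to obtain
\begin{align*}
2\left|b(\w,\u,\w)\right|\le \mu\|\w\|_{\Vbb}^2+\tfrac{\beta}{2}\||\u|^{\frac{\varpi-1}{2}}\w\|_2^2+2\widehat{\boldsymbol{\upvarpi}}\|\w\|_2^2.
\end{align*}
The key observation is that the middle term $\tfrac{\beta}{2}\||\u|^{\frac{\varpi-1}{2}}\w\|_2^2$ is absorbed by the damping lower bound $\beta\||\u|^{\frac{\varpi-1}{2}}\w\|_2^2$, leaving a nonnegative remainder. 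Collecting all contributions yields, $\Pbb$-a.s.,
\begin{align*}
\|\w(t)\|_2^2+(\mu-\sigma c_0\h^2)\int_0^t\|\w(s)\|_{\Vbb}^2\,\drm s+(2\alpha+\sigma-2\widehat{\boldsymbol{\upvarpi}}-L)\int_0^t\|\w(s)\|_2^2\,\drm s\le\|\u_0-\Urm_0\|_2^2+\mathcal{M}(t),
\end{align*}
where the residual damping integrals are discarded as nonnegative and $\mathcal{M}$ denotes the stochastic integral. Under \eqref{condition-on-sigma-7} one has $\mu-\sigma c_0\h^2\ge0$ and $2\alpha+\sigma-2\widehat{\boldsymbol{\upvarpi}}-L>0$; taking expectations kills the martingale, and Gronwall's inequality gives $\Ebb[\|\w(t)\|_2^2]\le\|\u_0-\Urm_0\|_2^2\,e^{-(2\alpha+\sigma-2\widehat{\boldsymbol{\upvarpi}}-L)t}$, the claimed exponential decay.

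For case (ii) the argument is identical except that I would instead apply the alternative bound \eqref{bilinear-etimate-2} to $b(\w,\w,\u)$, giving $2\left|b(\w,\u,\w)\right|\le\tfrac{1}{\beta}\|\w\|_{\Vbb}^2+\beta\||\u|^{\frac{\varpi-1}{2}}\w\|_2^2+\beta\|\w\|_2^2$; here the full term $\beta\||\u|^{\frac{\varpi-1}{2}}\w\|_2^2$ is absorbed by the damping. The enstrophy coefficient becomes $2\mu-\tfrac{1}{\beta}-\sigma c_0\h^2$, which is nonnegative precisely because $2\beta\mu>1$ and $\sigma\le\tfrac{1}{c_0\h^2}\left(2\mu-\tfrac1\beta\right)$, while the decay rate is $2\alpha+\sigma-\beta-L>0$ by \eqref{condition-on-sigma-8}. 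The only genuine subtlety---and the point where care is required---is matching the constant produced by the bilinear estimate against the damping term so that no uncontrolled factor $\|\u\|_{\Vbb}^2$ survives; once this absorption is secured, the relaxation rate is a fixed constant rather than the time-dependent enstrophy, and no stopping time is needed.
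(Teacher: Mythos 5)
Your proposal is correct and follows essentially the same route as the paper's own proof: the same It\^o identity for $\|\w\|_2^2$, the same application of \eqref{bilinear-etimate-1} (case (i)) and \eqref{bilinear-etimate-2} (case (ii)) to $b(\w,\w,\u)$ with the weighted terms $\tfrac{\beta}{2}\||\u|^{\frac{\varpi-1}{2}}\w\|_2^2$, respectively $\beta\||\u|^{\frac{\varpi-1}{2}}\w\|_2^2$, absorbed by the damping lower bound \eqref{2.23}, and the same expectation-plus-Gronwall conclusion yielding the rates $2\alpha+\sigma-2\widehat{\boldsymbol{\upvarpi}}-L$ and $2\alpha+\sigma-\beta-L$. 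The paper likewise needs no stopping-time argument in the supercritical regime, exactly as you observe.
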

\begin{proof}
	Let us discuss both cases separately.	
\vskip2mm
\noindent
\textbf{\textit{Case} (i).} \textbf{With \eqref{condition-on-sigma-7}}.   From \eqref{bilinear-etimate-1}, \eqref{eqn-data-inter}, and H\"older's and Young's inequalities, we achieve
	\begin{align}\label{Diff-9}
		 2\left|   b( \w, \u, \w )   + \sigma \left( \w,  \Rrm_{\h}(\w) - \w \right)  \right| 
		& \leq  2\left|   b( \w, \w, \u )  \right|  + 2 \left| \sigma \left( \w,  \Rrm_{\h}(\w) - \w \right)  \right| 
		\nonumber\\ 
		&  \leq  \mu \|\w\|^2_{\Vbb} + \frac{\beta}{2}  \||\u|^{\frac{\varpi -1}{2}}\w\|_2^2  + 2 \widehat{\boldsymbol{\upvarpi}}  \|\w\|_2^2 + 2  \sigma \| \w\|_2 \|\Rrm_{\h}(\w) - \w\|_2 
		\nonumber\\ 
		&  \leq  \mu \|\w\|^2_{\Vbb} + \frac{\beta}{2}  \||\u|^{\frac{\varpi -1}{2}}\w\|_2^2  + 2 \widehat{\boldsymbol{\upvarpi}}  \|\w\|_2^2 +   \sigma \| \w\|^2_2 + \sigma \| \Rrm_{\h}(\w) - \w\|^2_2 
		\nonumber\\ 
		&  \leq  \mu \|\w\|^2_{\Vbb} + \frac{\beta}{2}  \||\u|^{\frac{\varpi -1}{2}}\w\|_2^2  + 2 \widehat{\boldsymbol{\upvarpi}}  \|\w\|_2^2 +   \sigma \| \w\|^2_2 + \sigma c_0 \h^2 \|\w\|^2_{\Vbb} 
	\end{align}
	where $\widehat{\boldsymbol{\upvarpi}}$ is defined in \eqref{eqn-upvarpi}.

	Combining \eqref{Diff-1}-\eqref{Diff-3} and \eqref{Diff-9}, we find
	\begin{align}\label{Diff-10}
		& \|\w(t)\|^2_2 + (\mu-\sigma c_0 \h^2) \int_0^t \|\w(s)\|^2_{\Vbb}\,\drm s  + (2\alpha+\sigma - 2 \widehat{\boldsymbol{\upvarpi}} - L) \int_0^t \|\w(s)\|^2_2 \,\drm s 
		\nonumber\\
		&  \leq  \|\w(0)\|^2_2 + 2 \int_0^t \big( \w (s), [\Grm(\Urm(s))-\Grm(\u(s))]\, \drm\Wrm(s) \big).	
	\end{align}

For $2 \widehat{\boldsymbol{\upvarpi}} + L < 2\alpha + \sigma \leq 2\alpha + \frac{  \mu}{c_0 \h^2 }$, taking expectations on both sides of \eqref{Diff-10}, and noting that the stochastic integral is a (local) martingale, yield
\begin{align}
	& \Ebb\left[ \|\w(t)\|^2_2\right]  + (2\alpha+\sigma -2 \widehat{\boldsymbol{\upvarpi}}  - L) \int_0^t \Ebb \left[\|\w(s)\|^2_2\right] \,\drm s 
	\leq  \|\w(0)\|^2_2,
\end{align}
which, by Gronwall's lemma, immediately provides 
	\begin{align}
		& \Ebb \left[ \|\w(t)\|^2_2 \right] 
		\leq  \|\w(0)\|^2_2  e^{-(2\alpha+\sigma - 2 \widehat{\boldsymbol{\upvarpi}} - L)t} \to 0 \text{ as } t \to +\infty.
\end{align}	
	\vskip2mm
	\noindent
	\textbf{\textit{Case} (ii).} \textbf{For $2\beta\mu>1$ with \eqref{condition-on-sigma-8}}. From \eqref{bilinear-etimate-2}, \eqref{eqn-data-inter}, and H\"older's and Young's inequalities, we achieve
	\begin{align}\label{Diff-11}
		& 2\left|   b( \w, \u, \w )   + \sigma \left( \w,  \Rrm_{\h}(\w) - \w \right)  \right| 
		\leq   \frac{1}{\beta} \|\w\|_{\Vbb}^2 + \beta \||\u|^{\frac{\varpi-1}{2}}\w\|_{2}^2 + \beta \|\w\|_{2}^2 +   \sigma \| \w\|^2_2 + \sigma c_0 \h^2 \|\w\|^2_{\Vbb}.
	\end{align}

		Combining \eqref{Diff-1}-\eqref{Diff-3} and \eqref{Diff-11}, we find
\begin{align}\label{Diff-12}
	& \|\w(t)\|^2_2 + (2\mu - \frac{1}{\beta} - \sigma c_0 \h^2) \int_0^t \|\w(s)\|^2_{\Vbb}\,\drm s  + (2\alpha+\sigma - \beta - L) \int_0^t \|\w(s)\|^2_2 \,\drm s  
	\nonumber\\
	&  \leq  \|\w(0)\|^2_2 + 2 \int_0^t \big( \w (s), [\Grm(\Urm(s))-\Grm(\u(s))]\, \drm\Wrm(s) \big).	
\end{align}

For $\beta + L < 2\alpha + \sigma \leq 2\alpha + \frac{  1}{c_0 \h^2 }\left(2\mu - \frac{1}{\beta}\right)$, taking expectations on both sides of \eqref{Diff-12}, and noting that the stochastic integral is a (local) martingale, yield
\begin{align*}
	& \Ebb\left[ \|\w(t)\|^2_2\right]  + (2\alpha+\sigma -\beta  - L) \int_0^t \Ebb \left[\|\w(s)\|^2_2\right] \,\drm s 
	\leq  \|\w(0)\|^2_2,
\end{align*}
which, by Gronwall's lemma, immediately provides 
\begin{align*}
	& \Ebb \left[ \|\w(t)\|^2_2 \right] 
	\leq  \|\w(0)\|^2_2  e^{-(2\alpha+\sigma - \beta - L)t} \to 0 \text{ as } t \to +\infty.
\end{align*}	
This concludes the proof.
\end{proof}

\section{Pathwise convergence}\label{Sec8}

In Theorems \ref{MT-Subcritical}, \ref{MT-Critical}, and \ref{MT-Supercritical}, we establish that
\[
\lim_{t \to +\infty} \Ebb \left[ \|\u(t)-\Urm(t)\|_2^2 \right] = 0.
\]
As a consequence, there exists a subsequence $\{t_{n}\}_{n\in\N}$ such that $t_n \to +\infty$ as $n \to +\infty$, along which $\|\u(t)-\Urm(t)\|_2^2$ converges $\Pbb$-a.s. Moreover, as shown in \cite[Corollary 4.6]{BFLZ_2025_Arxiv}, for any sequence $\{t_n\}_{n\in\mathbb{N}}$ with $t_n \to +\infty$ as $n \to +\infty$, one can obtain that
\[
 \|\u(t_n)-\Urm(t_n) \|_2^2 \to 0
\quad \Pbb\text{-a.s.}
\]
We emphasize, however, that this result alone is not sufficient to conclude pathwise convergence, that is,
\[
 \|\u(t)-\Urm(t) \|_2^2 \to 0
\quad \text{as } t \to +\infty,\ \Pbb\text{-a.s.}
\]

\begin{corollary}
\label{pathwise_mult}
Under the same assumption as in Theorems \ref{MT-Subcritical}, \ref{MT-Critical}, and \ref{MT-Supercritical}, for any sequence of times $\{t_{n}\}_{n\in\N}$ such that $t_n \to +\infty$ as $n \to +\infty$, we get 
 \[
\mathbb{P}\left( \lim_{ n \to + \infty}\|\u(t_n)-\Urm(t_n)\|^2_2 =0\right) = 1.
 \]
\end{corollary}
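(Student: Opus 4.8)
The plan is to rescale $\|\w(t)\|_2^2$, with $\w:=\u-\Urm$, so as to obtain a non-negative supermartingale, apply the martingale convergence theorem, and exploit that the rescaling factor diverges to force $\|\w(t_n)\|_2^2\to 0$. Two regimes must be distinguished. In the critical case with $2\beta\mu>1$ and in both supercritical cases the convergence in expectation is unconditional, and the estimates leading to \eqref{Diff-8}, \eqref{Diff-10} and \eqref{Diff-12} yield the It\^o differential inequality $\drm\|\w\|_2^2\le -\gamma\|\w\|_2^2\,\drm t+2\big(\w,[\Grm(s,\Urm)-\Grm(s,\u)]\,\drm\Wrm\big)$ with a constant $\gamma>0$ (respectively $2\alpha+\sigma-L$, $2\alpha+\sigma-2\widehat{\boldsymbol{\upvarpi}}-L$, $2\alpha+\sigma-\beta-L$, each positive under the hypotheses of Theorems \ref{MT-Critical} and \ref{MT-Supercritical}). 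In the subcritical and the two-dimensional critical cases one must instead use the random integrating factor $\varrho$ of \eqref{eqn-varrho} and the stopping time \eqref{eqn-stopping-time}.

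For the unconditional regime I would apply It\^o's formula to $Y(t):=e^{\gamma t}\|\w(t)\|_2^2$, obtaining $Y(t)\le\|\w(0)\|_2^2+M(t)$ with $M(t):=2\int_0^t e^{\gamma s}\big(\w(s),[\Grm(s,\Urm(s))-\Grm(s,\u(s))]\,\drm\Wrm(s)\big)$, a local martingale whose quadratic variation obeys $[M](t)\le 4L\int_0^t e^{2\gamma s}\|\w(s)\|_2^4\,\drm s$. Since $\Ebb[[M](t)]\le 4L\,e^{2\gamma t}\int_0^t\Ebb[\|\w(s)\|_2^4]\,\drm s<\infty$ by Lemmas \ref{lem-pth-moments} and \ref{lem-pth-moments-Urm}, $M$ is a genuine martingale on each finite interval, so $Y$ is a non-negative supermartingale with $\Ebb[Y(t)]\le\|\w(0)\|_2^2$. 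The martingale convergence theorem then gives $Y(t)\to Y_\infty<\infty$ $\Pbb$-a.s.; evaluating along $\{t_n\}$ and using $e^{-\gamma t_n}\to 0$ yields $\|\w(t_n)\|_2^2=e^{-\gamma t_n}Y(t_n)\to 0$ $\Pbb$-a.s.

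For the remaining regime I would fix $\delta>0$ small and $\uppi$ large so that the admissible $\sigma$ of the relevant theorem meets the hypotheses \eqref{pi-1}--\eqref{sigma-1} (resp. \eqref{pi-2}--\eqref{sigma-2}) of Proposition \ref{prop-prob-esti-1}. By \eqref{Diff-6}, $Y(t):=e^{\varrho(t)}\|\w(t)\|_2^2$ is a non-negative local supermartingale; the deterministic bound $e^{\varrho(s)}\le e^{\left(\frac{2\alpha+\sigma}{1+\delta}-L\right)^{+}s}$ combined with the uniform fourth-moment estimates again makes the stochastic integral a true martingale on finite intervals, so $Y(t)\to Y_\infty<\infty$ $\Pbb$-a.s. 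On the event $G_R:=\{\uptau_{R,\uppi,\delta,\sigma}=+\infty\}$, the definition \eqref{eqn-stopping-time} gives $\frac{2}{\mu}\int_0^t\|\u(s)\|_{\Vbb}^2\,\drm s<R+\uppi+\big(\frac{2\alpha+\sigma}{(1+\delta)^2}-L\big)t$, whence $\varrho(t)\ge\frac{\delta(2\alpha+\sigma)}{(1+\delta)^2}\,t-R-\uppi\to+\infty$; therefore $\|\w(t_n)\|_2^2=e^{-\varrho(t_n)}Y(t_n)\to 0$ $\Pbb$-a.s. on $G_R$.

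To finish, since $G_R$ increases in $R$ and $\Pbb(G_R^{c})=\Pbb(\uptau_{R,\uppi,\delta,\sigma}<+\infty)\to 0$ as $R\to+\infty$ by Proposition \ref{prop-prob-esti-1}, I would conclude $\Pbb\big(\lim_{n\to+\infty}\|\w(t_n)\|_2^2=0\big)\ge\sup_{R}\Pbb(G_R)=1$. The step I expect to be the main obstacle is promoting the stochastic integral from a local to a genuine martingale, so that supermartingale convergence applies and the limit $Y_\infty$ is a.s. finite; this hinges on the deterministic exponential control of the integrating factor together with the $t$-independent fourth-moment bounds of Lemmas \ref{lem-pth-moments}--\ref{lem-pth-moments-Urm}, precisely the extra integrability furnished by the nonlinear damping of the CBFEs.
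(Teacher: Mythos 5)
Your proposal is correct, but it takes a genuinely different route from the paper, whose entire proof is a citation of \cite[Corollary 4.6]{BFLZ_2025_Arxiv}. The argument implicit in that reference (and in the toolkit of Sections \ref{Sec5}--\ref{Sec7}) works with the decomposition over $\{\uptau_{R,\uppi,\delta,\sigma}=+\infty\}$ and its complement at the level of \emph{expectations} (Corollary \ref{cor-stop-time} plus Proposition \ref{prop-prob-esti-1}); note that a naive Markov/Borel--Cantelli argument based only on the decay of $\Ebb[\|\u(t)-\Urm(t)\|_2^2]$ would fail here, since for an arbitrary sequence $t_n\to+\infty$ (e.g.\ $t_n=\log\log n$) neither $\sum_n e^{-at_n}$ nor $\sum_n t_n^{-p}$ need converge, so some genuinely pathwise ingredient is unavoidable. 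You supply exactly that ingredient: upgrading \eqref{Diff-6} (resp.\ \eqref{Diff-8}, \eqref{Diff-10}, \eqref{Diff-12}) to the statement that $Y(t)=e^{\varrho(t)}\|\w(t)\|_2^2$ (resp.\ $e^{\gamma t}\|\w(t)\|_2^2$) is a continuous non-negative supermartingale, which hinges on the two points you correctly isolate: the drift inequality holds between \emph{arbitrary} times $s<t$ because the estimates \eqref{Diff-2}--\eqref{Diff-4} are pointwise in time, and the stochastic integral is a true martingale on bounded intervals thanks to the deterministic bound $\varrho(s)\le\left(\tfrac{2\alpha+\sigma}{1+\delta}-L\right)^{+}s$ together with the $t$-uniform fourth moments of Lemmas \ref{lem-pth-moments}--\ref{lem-pth-moments-Urm}. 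What your route buys is strictly more than the corollary claims: Doob's convergence theorem gives $\sup_{t\ge 0}Y(t)<\infty$ a.s., hence on each $G_R=\{\uptau_{R,\uppi,\delta,\sigma}=+\infty\}$ one gets $\|\u(t)-\Urm(t)\|_2^2\le e^{R+\uppi}e^{-\frac{\delta(2\alpha+\sigma)}{(1+\delta)^2}t}\sup_{s}Y(s)\to 0$ in \emph{continuous} time, and since $\Pbb(G_R)\to 1$ this is full pathwise a.s.\ convergence even for multiplicative noise --- stronger than the sequence-wise statement, and stronger than what the paper asserts can be deduced (it reserves pathwise convergence for the additive case in Section \ref{Sec8}). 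Two small points of precision: in the 2D critical case ($\varpi=3$, $L>0$) the relevant tail bound is the proposition of Section \ref{Sec6} with conditions \eqref{pi-3}--\eqref{sigma-3} (whose constant $\widehat{\Mrm}$ matches \eqref{condition-on-sigma-6}), not Proposition \ref{prop-prob-esti-1} as your text suggests; and in all conditional regimes one should say explicitly that the strict inequalities \eqref{condition-on-sigma-2}, \eqref{condition-on-sigma-6} leave room to choose $\delta>0$ small enough that the $(1+\delta)^2$-inflated conditions \eqref{sigma-2}, \eqref{sigma-3} hold.
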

\begin{proof}
See the proof of \cite[Corollary 4.6]{BFLZ_2025_Arxiv}.
\end{proof}

We now turn to the pathwise analysis, in which convergence results are understood in the $\Pbb$-a.s. sense. Throughout this part, we assume that $\Grm$ is independent of $\u$. Our main result is stated as follows.

\subsection{2D SCBFEs with $\varpi\in[1,3]$}

Let us first provide an auxiliary lemma which will be used in sequel.

\begin{lemma}
\label{data-estimate-1}
Let $\Grm\in \mathcal{L}_{\Qrm}(\Hbb)$ and let $\Rrm_{\h}$ satisfy \eqref{eqn-data-inter}. 
Let $\u$ and $\Urm$ be the solutions to the systems \eqref{SCBFE} and \eqref{SCBFE-CDA}, respectively. If $\sigma$ and $\h$ satisfy
\begin{equation}
\label{condi-1-mu}
0<\sigma \le \frac{  \mu }{c_0 \h^2 } 
\end{equation}
 where $c_0$ is the constant that appears in estimate \eqref{eqn-data-inter}, then $\Pbb$-a.s.  we have
\begin{align}\label{error-u-U}
\left\| \u(t)-\Urm(t) \right\|_{2}^2 &\leq \left\|\u_0 - \Urm_0 \right\|_{2}^2 \cdot  \exp \left(-(2\alpha+\sigma) t + \frac{2}{\mu} \displaystyle \int_{0}^t  \left\| \u(s) \right\|_{\Vbb}^2 \drm s   \right), \qquad \text{ for any }t>0.
\end{align} 
\end{lemma}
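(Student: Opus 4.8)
The plan is to exploit the defining feature of the additive-noise regime: when $\Grm$ is independent of the solution, the noise coefficients of the two systems coincide and cancel in the difference, so the error equation becomes a genuine pathwise (random) PDE carrying no stochastic integral. Concretely, I would set $\w:=\Urm-\u$ and subtract \eqref{SCBFE} from \eqref{SCBFE-CDA}; the forcing $[\Grm(s,\Urm)-\Grm(s,\u)]\,\drm\Wrm$ vanishes identically, and $\w$ solves exactly the equation appearing in the proof of Lemma \ref{lem-Difference-1}, but now with zero stochastic forcing and with the Lipschitz constant of \eqref{Diff-3} effectively equal to $L=0$. As there, the convective difference splits as $\B(\Urm,\Urm)-\B(\u,\u)=\B(\Urm,\w)+\B(\w,\u)$.

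First I would reproduce the energy identity for $\|\w(\cdot)\|_2^2$ precisely as in Lemma \ref{lem-Difference-1}, up to \eqref{Diff-5}. Because there is no martingale term, the It\^o formula collapses to the deterministic chain rule and every step holds $\Pbb$-almost surely, with no expectation taken. The antisymmetry $b(\Urm,\w,\w)=0$ from the first identity in \eqref{b0} eliminates the contribution of $\B(\Urm,\w)$, the monotonicity \eqref{2.23} shows $\langle\w,\K(\Urm)-\K(\u)\rangle\ge 0$ so that damping term may be discarded, and the surviving convective and nudging terms are controlled by the two-dimensional estimate \eqref{Diff-4} (Ladyzhenskaya's inequality in $d=2$ together with the interpolation bound \eqref{eqn-data-inter}). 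Setting $L=0$ and deleting the stochastic integral in \eqref{Diff-5} then yields, $\Pbb$-a.s.,
\[
\|\w(t)\|_2^2 + (\mu-\sigma c_0\h^2)\int_0^t\|\w(s)\|_{\Vbb}^2\,\drm s + \int_0^t\Big(2\alpha+\sigma-\tfrac{2}{\mu}\|\u(s)\|_{\Vbb}^2\Big)\|\w(s)\|_2^2\,\drm s \le \|\w(0)\|_2^2 .
\]

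Next, invoking the hypothesis $0<\sigma\le \mu/(c_0\h^2)$ renders the viscous coefficient $\mu-\sigma c_0\h^2$ nonnegative, so that term is dropped, leaving the pathwise integral inequality $\|\w(t)\|_2^2 \le \|\w(0)\|_2^2 + \int_0^t\big(\tfrac{2}{\mu}\|\u(s)\|_{\Vbb}^2-(2\alpha+\sigma)\big)\|\w(s)\|_2^2\,\drm s$. A direct application of the deterministic Gr\"onwall inequality, used pathwise, then gives $\|\w(t)\|_2^2\le \|\w(0)\|_2^2\exp\!\big(\!-(2\alpha+\sigma)t+\tfrac{2}{\mu}\int_0^t\|\u(s)\|_{\Vbb}^2\,\drm s\big)$, which is precisely \eqref{error-u-U} since $\w(0)=\Urm_0-\u_0$.

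The argument is largely routine once the cancellation is noticed; the two points genuinely worth verifying are (i) that both the stochastic integral and the It\^o correction vanish identically, since this is exactly what upgrades the bound from one in expectation (as forced in the multiplicative case) to a $\Pbb$-a.s.\ pathwise estimate, and is the crux of the additive-noise analysis; and (ii) that the random Gr\"onwall kernel $\tfrac{2}{\mu}\|\u(s)\|_{\Vbb}^2-(2\alpha+\sigma)$ belongs to $\Lrm^1_{\mathrm{loc}}(0,\infty)$ $\Pbb$-a.s., which follows from the regularity $\u\in\Lrm^2_{\mathrm{loc}}(0,\infty;\Vbb)$ $\Pbb$-a.s.\ furnished by Theorem \ref{thm-StrongSolution}. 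This last point guarantees that the integrating factor, and hence the right-hand side of \eqref{error-u-U}, is $\Pbb$-a.s.\ finite for every $t>0$.
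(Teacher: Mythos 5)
Your proposal is correct and follows essentially the same route as the paper: the paper likewise observes that additive noise forces $L=0$ and makes the stochastic integral in \eqref{Diff-5} vanish identically, yielding the pathwise inequality \eqref{Diff-13}, and then concludes by Gr\"onwall's inequality. Your additional checks (nonnegativity of $\mu-\sigma c_0\h^2$ under \eqref{condi-1-mu}, and local integrability of the Gr\"onwall kernel via $\u\in\Lrm^2_{\mathrm{loc}}(0,\infty;\Vbb)$ $\Pbb$-a.s.) are exactly the details the paper leaves implicit.
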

\begin{proof}
	Since $\Grm$ is independent of $\u$ and consequently $L=0$, we infer from \eqref{Diff-5} that $\w:= \u-\Urm$ satisfies
	\begin{align}\label{Diff-13}
		& \|\w(t)\|^2_2 + (\mu-\sigma c_0 \h^2) \int_0^t \|\w(s)\|^2_{\Vbb}\,\drm s  +  \int_0^t  \left(2\alpha+\sigma - \frac{2}{\mu} \|\u(s)\|_{\Vbb}^2  \right) \|\w(s)\|^2_2 \,\drm s 
		  \leq  \|\w(0)\|^2_2.
	\end{align}
Hence, an application of Gronwall's inequality completes the proof.
\end{proof}

Now, we establish the main result of this subsection.

\begin{theorem}\label{pathwise_data_ass}
	Let $\Grm\in \mathcal{L}_{\Qrm}(\Hbb)$ and let $\Rrm_{\h}$ satisfy \eqref{eqn-data-inter}. Let $\u$ and $\Urm$ be the solutions to the systems \eqref{SCBFE} and \eqref{SCBFE-CDA}, respectively. If $\sigma$ and $\h$ satisfy
	\begin{equation}\label{cond_mu}
		\frac{4}{\mu^2} \left[\|\Grm\|^2_{\mathcal{L}_{\Qrm}} + \frac{\|\f\|_{\Vbb^*}^2 }{\mu} \right] < 2\alpha+ \sigma \leq 2\alpha+ \frac{  \mu }{c_0 \h^2 }, 
	\end{equation}
	then
	$\|\u(t)-\Urm(t)\|_2 \to 0$ exponentially fast as $t \to + \infty$, $\mathbb{P}$-a.s.
\end{theorem}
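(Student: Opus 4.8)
The plan is to start from the pathwise bound \eqref{error-u-U} of Lemma \ref{data-estimate-1}, which already isolates the only obstruction to decay: the random integral $\frac{2}{\mu}\int_0^t\|\u(s)\|_{\Vbb}^2\,\drm s$ sitting in the exponent. Since here $\Grm$ is independent of $\u$, Hypothesis \ref{hyp-noise} holds with $L=0$ and with $K=\|\Grm\|_{\mathcal{L}_{\Qrm}}^2$ in (H.2), so it suffices to show that $\Pbb$-a.s.
\[
\frac{2}{\mu}\int_0^t\|\u(s)\|_{\Vbb}^2\,\drm s \;\leq\; \frac{4}{\mu^2}\left[\|\Grm\|_{\mathcal{L}_{\Qrm}}^2 + \frac{\|\f\|_{\Vbb^*}^2}{\mu}\right]t + C(\omega) \qquad \text{for all } t\geq 0,
\]
with $C(\omega)$ finite $\Pbb$-a.s. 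Indeed, inserting this into \eqref{error-u-U} bounds the exponent by $-\kappa t + C(\omega)$, where $\kappa := (2\alpha+\sigma) - \frac{4}{\mu^2}\big[\|\Grm\|_{\mathcal{L}_{\Qrm}}^2 + \frac{\|\f\|_{\Vbb^*}^2}{\mu}\big]$ is strictly positive precisely by the left inequality in \eqref{cond_mu}.

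The heart of the argument is to extract this almost-sure linear-in-$t$ bound from the probabilistic estimate \eqref{eqn-Prob-Est-varpi1} of Lemma \ref{lem-prob-esti-1} in the case $L=0$. First I would discard the nonnegative terms $\|\u(t)\|_2^2$ and $2\beta\int_0^t\|\u(s)\|_{\varpi+1}^{\varpi+1}\,\drm s$ on the left of \eqref{eqn-Prob-Est-varpi1}; taking $T=0$ and $R=n$ this yields, by the obvious inclusion of events,
\[
\Pbb\left\{\sup_{t\geq 0}\left(\frac{\mu}{2}\int_0^t\|\u(s)\|_{\Vbb}^2\,\drm s - \|\u_0\|_2^2 - \Big[\|\Grm\|_{\mathcal{L}_{\Qrm}}^2+\tfrac{\|\f\|_{\Vbb^*}^2}{\mu}\Big]t\right)\geq n\right\} \leq e^{-\frac{1}{8\|\Grm\|_{\mathcal{L}_{\Qrm}}^2}\left(\frac{\mu\lambda_1}{2}+2\alpha\right)n}.
\]
Since the right-hand side is summable over $n\in\N$, the Borel--Cantelli lemma produces a $\Pbb$-a.s. finite random index $N(\omega)$ such that, for every $t\geq 0$, $\frac{\mu}{2}\int_0^t\|\u(s)\|_{\Vbb}^2\,\drm s \leq \|\u_0\|_2^2 + N(\omega) + \big[\|\Grm\|_{\mathcal{L}_{\Qrm}}^2+\frac{\|\f\|_{\Vbb^*}^2}{\mu}\big]t$. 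Multiplying by $\frac{4}{\mu^2}$ gives exactly the claimed bound, with $C(\omega) = \frac{4}{\mu^2}\big(\|\u_0\|_2^2+N(\omega)\big)$.

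Substituting back into \eqref{error-u-U} then yields $\|\u(t)-\Urm(t)\|_2^2 \leq \|\u_0-\Urm_0\|_2^2\,e^{C(\omega)}\,e^{-\kappa t}$ with $\kappa>0$, establishing $\Pbb$-a.s. exponential decay of $\|\u(t)-\Urm(t)\|_2$; the upper constraint $2\alpha+\sigma\leq 2\alpha+\mu/(c_0\h^2)$ in \eqref{cond_mu} serves only to guarantee $0<\sigma\leq\mu/(c_0\h^2)$ so that Lemma \ref{data-estimate-1} is applicable. The step I expect to be the main obstacle is precisely the passage from the probabilistic estimate to a pathwise bound that is \emph{uniform in $t$}: one must exploit that \eqref{eqn-Prob-Est-varpi1} controls the supremum over all $t\geq 0$ \emph{inside} the probability before invoking Borel--Cantelli, since a bound holding only for each fixed $t$ would fail to control the exponent simultaneously along the whole trajectory as $t\to+\infty$.
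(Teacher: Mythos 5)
Your proposal is correct and follows essentially the same route as the paper: both start from the pathwise bound \eqref{error-u-U} of Lemma \ref{data-estimate-1} and control the exponent there via the $L=0$ probability estimate \eqref{eqn-Prob-Est-varpi1} of Lemma \ref{lem-prob-esti-1} with $K=\|\Grm\|^2_{\mathcal{L}_{\Qrm}}$, the upper bound in \eqref{cond_mu} serving only to make Lemma \ref{data-estimate-1} applicable. The sole (immaterial) difference is the mechanism for passing from the probabilistic estimate to a pathwise statement: you discretize $R=n$ and apply Borel--Cantelli to obtain a uniform-in-$t$ linear bound with a random additive constant $C(\omega)$, whereas the paper lets $R\to+\infty$ to deduce the a.s.\ Ces\`aro bound $\limsup_{t \to +\infty} \frac{2}{\mu t} \int_{0}^{t}\|\u(s)\|^2_{\Vbb} \drm s \leq \frac{4}{\mu^2} \left[\|\Grm\|^2_{\mathcal{L}_{\Qrm}} + \frac{\|\f\|_{\Vbb^*}^2 }{\mu} \right]$ and concludes with an $\varepsilon$-of-room argument; both yield the same exponential decay rate.
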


\begin{proof}
In view of \eqref{eqn-Prob-Est-varpi1} and \eqref{eqn-Prob-Est-varpi=3}, we have for $\varpi\in[1,3]$
\begin{align}
	& \Pbb\left\{\sup_{t \geq T} \left( \|\u(t)\|^{2}_{2} + \int_{0}^{t}\left(\frac{\mu}{2}\|\u(s)\|^2_{\Vbb}+2\beta\|\u(s)\|^{\varpi+1}_{\varpi+1}\right)\drm s - \|\u_0\|^{2}_{2}  - \left[\|\Grm\|^2_{\mathcal{L}_{\Qrm}}  + \frac{\|\f\|_{\Vbb^*}^2 }{\mu} \right]t  \right) \geq R \right\}
	\nonumber\\ 
	& \leq e^{- \frac{1}{8K} \left(\frac{\mu\lambda_1}{2} + 2\alpha\right) R};
\end{align}
for all $T\geq 0$ and $R>0$, which implies
\begin{align}
	& \Pbb\left\{ \limsup_{t \to +\infty} \frac{2}{\mu t} \int_{0}^{t}\|\u(s)\|^2_{\Vbb} \drm s  \leq  \frac{4}{\mu^2} \left[\|\Grm\|^2_{\mathcal{L}_{\Qrm}}  + \frac{\|\f\|_{\Vbb^*}^2 }{\mu} \right]  \right\} = 1.
\end{align}
Therefore, for any initial velocity $\u_0\in \Hbb$, 
\begin{align}\label{limit-t}
	\limsup_{t \to +\infty} \frac{2}{\mu t} \int_{0}^{t}\|\u(s)\|^2_{\Vbb} \drm s  \leq  \frac{4}{\mu^2} \left[\|\Grm\|^2_{\mathcal{L}_{\Qrm}} + \frac{\|\f\|_{\Vbb^*}^2 }{\mu} \right], \;\;\; \Pbb\text{-a.s.}
\end{align}
From  \eqref{cond_mu} there exists a constant $\varepsilon>0$ such that
\[
2\alpha + \sigma-  \frac{4}{\mu^2} \left[\|\Grm\|^2_{\mathcal{L}_{\Qrm}} + \frac{\|\f\|_{\Vbb^*}^2 }{\mu} \right] - \varepsilon>0.
\]
This gives along with \eqref{limit-t}
\begin{align}
	& \lim_{t \to +\infty}\exp \left(-(2\alpha+\sigma) t + \frac{2}{\mu} \displaystyle \int_{0}^t  \left\| \u(s) \right\|_{\Vbb}^2 \drm s   \right)
	\nonumber\\
	&  \leq
	\lim_{t \to +\infty}\exp \left( - \varepsilon t - \frac{4}{\mu^2} \left[\|\Grm\|^2_{\mathcal{L}_{\Qrm}} + \frac{\|\f\|_{\Vbb^*}^2 }{\mu} \right] t + \frac{2}{\mu} \displaystyle \int_{0}^t  \left\| \u(s) \right\|_{\Vbb}^2 \drm s   \right)  =0,
\end{align}
and by \eqref{error-u-U} we also have that $\u-\Urm$ vanishes for large times. 
Particularly,  we see that there exists a time $t_0>0$ such that for any $t>t_0$, we have
\[
\|\u(t)-\Urm(t)\|_2^2 \leq 
\|\u_0-\Urm_0\|^2_2
  \exp\left(-\left(2\alpha+ \sigma- \frac{4}{\mu^2} \left[\|\Grm\|^2_{\mathcal{L}_{\Qrm}} + \frac{\|\f\|_{\Vbb^*}^2 }{\mu} \right]-\varepsilon \right)t\right)
\]
showing the exponential convergence rate. This completes the proof.
\end{proof}

\subsection{2D and 3D SCBFEs for $\varpi\geq3$}

\begin{theorem}\label{pathwise_data_ass-geq3}
	Let $\Grm\in \mathcal{L}_{\Qrm}(\Hbb)$ and let $\Rrm_{\h}$ satisfy \eqref{eqn-data-inter}. Let $\u$ and $\Urm$ be the solutions to the systems \eqref{SCBFE} and \eqref{SCBFE-CDA}, respectively. If 
	\begin{enumerate}
		\item for $d\in\{2,3\}$ with $\varpi=3$ and $2\beta\mu>1$, $\sigma$ and $\h$ satisfy
		\begin{align}\label{cond_mu-1}
			0 <  \sigma \leq  \frac{  \mu }{c_0 \h^2 }\left(2\mu - \frac1\beta\right);
		\end{align}
	\item for $d\in\{2,3\}$ with $\varpi>3$, $\sigma$ and $\h$ satisfy
	\begin{align}\label{cond_mu-2}
		2 \widehat{\boldsymbol{\upvarpi}}  < 2\alpha + \sigma \leq 2\alpha + \frac{  \mu}{c_0 \h^2 };
	\end{align}
where $\widehat{\boldsymbol{\upvarpi}}$ is given by \eqref{eqn-upvarpi},
\item for $d\in\{2,3\}$ with $\varpi>3$ and $2\beta\mu>1$, $\sigma$ and $\h$ satisfy
\begin{align}\label{cond_mu-3}
	\beta <  2\alpha + \sigma \leq  2\alpha + \frac{  \mu }{c_0 \h^2 }\left(2\mu - \frac1\beta\right);
\end{align}
	\end{enumerate}
	then
	$\|\u(t)-\Urm(t)\|_2 \to 0$ exponentially fast as $t \to + \infty$, $\mathbb{P}$-a.s.
\end{theorem}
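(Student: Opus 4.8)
The plan is to leverage the fact that, for additive noise, all stochastic contributions drop out of the energy balance for the difference $\w:=\u-\Urm$, so that the conclusion can be extracted directly from the pathwise inequalities already produced in the proofs of Theorems~\ref{MT-Critical} and \ref{MT-Supercritical}. Since $\Grm\in\mathcal{L}_{\Qrm}(\Hbb)$ does not depend on $\u$, we have $\Grm(s,\Urm(s))-\Grm(s,\u(s))\equiv\boldsymbol{0}$ and hence $L=0$; thus both the It\^o correction $\int_0^t\|\Grm(s,\Urm)-\Grm(s,\u)\|^2_{\mathcal{L}_{\Qrm}}\,\drm s$ appearing in \eqref{Diff-1} (which is controlled by \eqref{Diff-3}) and the stochastic integral in \eqref{Diff-8}, \eqref{Diff-10}, \eqref{Diff-12} vanish identically along every realisation. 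Consequently each of those balances holds $\Pbb$-a.s.\ as a \emph{deterministic} differential inequality for $t\mapsto\|\w(t)\|_2^2$, so no expectation and no probabilistic input are needed.

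I would then treat the three regimes using the respective absorption estimates. For $d\in\{2,3\}$ with $\varpi=3$ and $2\beta\mu>1$, I use \eqref{Diff-8} with $L=0$; the upper bound on $\sigma$ in \eqref{cond_mu-1} makes the enstrophy coefficient $2\mu-\tfrac1\beta-\sigma c_0\h^2$ nonnegative, which I drop to obtain $\|\w(t)\|_2^2+(2\alpha+\sigma)\int_0^t\|\w(s)\|_2^2\,\drm s\le\|\w(0)\|_2^2$. For $\varpi>3$, I use \eqref{Diff-10}, which rests on the bilinear bound \eqref{bilinear-etimate-1}: the upper bound in \eqref{cond_mu-2} gives $\mu-\sigma c_0\h^2\ge0$, and the lower bound guarantees the zeroth-order coefficient $2\alpha+\sigma-2\widehat{\boldsymbol{\upvarpi}}>0$. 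For $\varpi>3$ with $2\beta\mu>1$, I use \eqref{Diff-12}, based on \eqref{bilinear-etimate-2}; then \eqref{cond_mu-3} simultaneously ensures $2\mu-\tfrac1\beta-\sigma c_0\h^2\ge0$ and $2\alpha+\sigma-\beta>0$. In each case a pathwise Gronwall inequality yields $\|\u(t)-\Urm(t)\|_2^2\le\|\u_0-\Urm_0\|_2^2\,e^{-ct}$ with the positive constant $c$ equal to $2\alpha+\sigma$, $2\alpha+\sigma-2\widehat{\boldsymbol{\upvarpi}}$, and $2\alpha+\sigma-\beta$, respectively; since this holds $\Pbb$-a.s.\ and for every $t$ (not merely along a subsequence), the asserted exponential pathwise convergence follows.

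The feature worth emphasising — and the reason this argument is far shorter than that of Theorem~\ref{pathwise_data_ass} — is that the damping produces a \emph{constant} decay rate, so no control of the time-averaged enstrophy $\tfrac1t\int_0^t\|\u(s)\|_{\Vbb}^2\,\drm s$ is required. In the range $\varpi\in[1,3]$ of Theorem~\ref{pathwise_data_ass}, the trilinear term $b(\w,\u,\w)$ was handled by the two-dimensional Ladyzhenskaya inequality, leaving a factor $\tfrac2\mu\|\u(s)\|_{\Vbb}^2$ multiplying $\|\w\|_2^2$; taming that factor forced the $\limsup$ argument based on Lemmas~\ref{lem-prob-esti-1}--\ref{lem-prob-esti-2}. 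For $\varpi\ge3$, by contrast, the nonlinear damping absorbs $b(\w,\u,\w)$ outright via \eqref{Diff-2} together with \eqref{bilinear-etimate-1} or \eqref{bilinear-etimate-2}, leaving only the deterministic constants $2\widehat{\boldsymbol{\upvarpi}}$, $\beta$, or none at all. This is exactly what removes any dimensional restriction and lets the proof run verbatim in $3$D. The only genuine care is to confirm that this absorption is valid in each of the three subcases and that the two-sided conditions \eqref{cond_mu-1}--\eqref{cond_mu-3} keep the enstrophy coefficient nonnegative while the zeroth-order coefficient stays strictly positive; once checked, there is no remaining obstacle.
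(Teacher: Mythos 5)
Your proposal is correct and takes essentially the same route as the paper: since $\Grm$ is independent of $\u$ one has $L=0$ and the It\^o correction and stochastic integrals vanish pathwise, so \eqref{Diff-8}, \eqref{Diff-10} and \eqref{Diff-12} reduce $\Pbb$-a.s.\ to deterministic inequalities whose enstrophy coefficients are nonnegative under \eqref{cond_mu-1}--\eqref{cond_mu-3}, and Gronwall's inequality yields exponential decay with exactly the rates $2\alpha+\sigma$, $2\alpha+\sigma-2\widehat{\boldsymbol{\upvarpi}}$ and $2\alpha+\sigma-\beta$ used in the paper. Your closing comparison with Theorem~\ref{pathwise_data_ass} (no time-averaged enstrophy control needed because the damping absorbs $b(\w,\u,\w)$) is a correct reading of why the paper's proof here is so short.
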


\begin{proof}
Since $\Grm$ is independent of $\u$ and consequently $L=0$,	in view of \eqref{Diff-8}, \eqref{Diff-10} and \eqref{Diff-12}, respectively, we write
	\begin{align*}
		 \|\w(t)\|^2_2  - \|\w(0)\|^2_2  \leq  -  
		 \begin{cases}
		 	(2\alpha+\sigma ) \int_0^t \|\w(s)\|^2_2 \,\drm s, & \text{ for } d\in\{2,3\} \text{ with } \varpi=3 \text{ and } \eqref{cond_mu-1};\\
		 	(2\alpha+\sigma - 2 \widehat{\boldsymbol{\upvarpi}} )  \int_0^t \|\w(s)\|^2_2 \,\drm s, & \text{ for } d\in\{2,3\} \text{ with } \varpi>3 \text{ and } \eqref{cond_mu-2};\\
		 	(2\alpha+\sigma - \beta )  \int_0^t \|\w(s)\|^2_2 \,\drm s, & \text{ for } d\in\{2,3\} \text{ with } \varpi>3 \text{ and } \eqref{cond_mu-3}.
		 \end{cases}	
	\end{align*}
Hence, an application of Gronwall's inequality conclude the proof.
\end{proof}

\medskip\noindent
{\bf Acknowledgments:}     This work is funded by national funds through the FCT - Fundação para a Ciência e a Tecnologia, I.P., under the scope of the projects UID/297/2025 and UID/PRR/297/2025 (Center for Mathematics and Applications - NOVA Math).  K. Kinra wish to thank Dr. M.T. Mohan for suggesting this problem.

\medskip\noindent
\textbf{Data availability:} No data was used for the research described in the article.

\medskip\noindent
\textbf{Declarations}: During the preparation of this work, the authors have not used AI tools.

\medskip\noindent
\textbf{Author Contributions}: The sole author wrote, and edited the entire manuscript.

\medskip\noindent
\textbf{Conflict of interest:} The author declares no conflict of interest.

\appendix



\begin{thebibliography}{99}



\bibitem{azouani2014continuous} A. Azouani, E. Olson and E.S. Titi, Continuous data assimilation using general interpolant observables, {\em Journal of Nonlinear Science}, \textbf{24} (2014), 277-304.

\bibitem{AbderrahimTiti2014} A. Azouani and E.S. Titi, Feedback control of nonlinear dissipative systems by finite
  determining parameters - a reaction-diffusion paradigm, {\em Evolution Equations and Control Theory}, \textbf{3}(4) (2014), 579-594.



\bibitem{BB_RMS_2025}  A. Balakrishna and A. Biswas, Determining functionals and data assimilation and a novel regularity criterion for the three-dimensional Navier-Stokes equations, \emph{Res. Math. Sci.}, {\bf 12}(3) (2025), Paper No. 46, 29 pp.  



\bibitem{BFLZ_2025_Arxiv}  H. Bessaih, B. Ferrario, O. Landoulsi and M. Zanella, Continuous data assimilation for 2D stochastic Navier-Stokes equations. \url{https://arxiv.org/pdf/2512.15184v1}


\bibitem{bessaih2022continuous}
H. Bessaih, V. Ginting and B. McCaskill, Continuous data assimilation for displacement in a porous medium, {\em Numerische Mathematik}, \textbf{151}(4) (2022), 927-962.




\bibitem{bessaih2015continuous}
H. Bessaih, E. Olson and E.S. Titi, Continuous data assimilation with stochastically noisy data, {\em Nonlinearity}, \textbf{28}(3) (2015).

\bibitem{BHLP} A. Biswas, J. Hudson, A. Larios and Y. Pei, Continuous data assimilation for the 2{D} magnetohydrodynamic equations using one component of the velocity and magnetic fields, {\em Asymptot. Anal.}, \textbf{108}(1-2) (2018), 1-43.



\bibitem{BP_SIMA_2021}  A. Biswas and R. Price, Continuous data assimilation for the three-dimensional Navier-Stokes equations, \emph{SIAM J. Math. Anal.}, {\bf 53}(6) (2021), 6697-6723. 


\bibitem{Blomker} D. Bl\"omker, K. Law, A.M. Stuart, and K.C. Zygalakis, Accuracy and stability of the continuous-time 3DVAR filter for the Navier--Stokes equation, {\em Nonlinearity}, \textbf{26}(8) (2013), 2193-2219.
 

\bibitem{CGJA} Y. Cao, A. Giorgini, M. Jolly and A. Pakzad, Continuous data assimilation for the 3{D} {L}adyzhenskaya model: analysis and computations, {\em Nonlinear Anal. Real World Appl.}, \textbf{68} (2022), Paper No. 103659.
  
  
  
  \bibitem{foias1991determining} F. Ciprian and E.S. Titi, Determining nodes, finite difference schemes and inertial manifolds, {\em Nonlinearity}, \textbf{4}(1) (1991).
  

\bibitem{daley1993atmospheric} R. Daley, {\em Atmospheric data analysis}, Cambridge University Press, 1993.



	\bibitem{DaZ}	G. Da Prato and J. Zabczyk, \emph{Stochastic Equations in Infinite Dimensions}, Cambridge University Press, 2014.

 

\bibitem{DSZ_2025_Arxiv} A. Di Primio, L. Scarpa, and M. Zanella, Existence, uniqueness and asymptotic stability of invariant measures for the stochastic Allen-Cahn-Navier-Stokes system with singular potential, \url{https://arxiv.org/pdf/2501.06174}.





\bibitem{FGHMMW} A.~Farhat, N.~E. Glatt-Holtz, V.~R. Martinez, S.~A. McQuarrie, and J.~P.
  Whitehead, Data assimilation in large {P}randtl {R}ayleigh-{B}\'{e}nard
  convection from thermal measurements, {\em SIAM J. Appl. Dyn. Syst.}, \textbf{19}(1) (2020), 510-540.





\bibitem{RFHK} R. Farwig,  H. Kozono and  H. Sohr, An $L^q$-approach to Stokes and Navier-Stokes equations in general domains, \emph{Acta Math.} {\bf 195} (2005), 21-53. 





\bibitem{Farwig+Kozono+Sohr_2007}  R. Farwig, H. Kozono and H. Sohr, On the Helmholtz decomposition in general unbounded domains, \textit{Arch. Math. (Basel)}, {\bf 88}(3) (2007), 239-248.  




 


\bibitem{BZ_DCDS} B. Ferrario and M. Zanella, Uniqueness of the invariant measure and asymptotic stability for the 2D Navier-Stokes equations with multiplicative noise, \emph{Discrete Contin. Dyn. Syst.}, {\bf 44}(1) (2024), 228-262. 


\bibitem{FZ25} B. Ferrario and M. Zanella, Long time behavior of the stochastic 2{D} {N}avier-{S}tokes
  equations, {\em Commun. Math. Anal. Appl.}, \textbf{4}(4) (2025), 550-576.

 
 
 
 
 \bibitem{DFHM} D. Fujiwara, H. Morimoto, An $L^r$-theorem of the Helmholtz decomposition of vector fields, \emph{J. Fac. Sci. Univ. Tokyo Sect. IA Math.,} {\bf 24}(1977),	685-700.
 
 
 
 
 \bibitem{GKM_AMOP} S. Gautam, K. Kinra and M.T. Mohan, Feedback stabilization of convective Brinkman-Forchheimer extended Darcy equations, \emph{Appl. Math. Optim.}, \textbf{91}(1) (2025), Paper No. 25.
 
 
 \bibitem{GM_2025} S. Gautam and M.T. Mohan, 	On convective Brinkman-Forchheimer equations, \emph{Dyn. Partial Differ. Equ.}, \textbf{22}(3) (2025), pp. 191-233.
 
 
 
 
 
 
 
 
 
 
 \bibitem{Glatt-Holtz_2014_Arxiv}  N. Glatt-Holtz, Notes on statistically invariant states in stochastically driven fluid flows, \url{https://arxiv.org/pdf/1410.8622}.
 
 
 




 

\bibitem{GHMR17} N.~Glatt-Holtz, J.C.~Mattingly and G.~Richards, On unique ergodicity in nonlinear stochastic partial differential equations, {\em Journal of Statistical Physics}, \textbf{166} (3-4), (2017), 618-649. 






\bibitem{JST} M.S. Jolly, T. Sadigov and E.S. Titi, Determining form and data assimilation algorithm for weakly damped
  and driven {K}orteweg-de {V}ries equation-{F}ourier modes case, {\em Nonlinear Anal. Real World Appl.}, \textbf{36} (2017), 287-317.

\bibitem{jones1992determining} D.A. Jones and E.S. Titi, Determining finite volume elements for the 2d Navier-Stokes
  equations, {\em Physica D: Nonlinear Phenomena}, \textbf{60}(1-4) (1992), 165-174.

\bibitem{jones1993upper} D.A Jones and E.S. Titi, Upper bounds on the number of determining modes, nodes, and volume elements for the Navier-Stokes equations, {\em Indiana University Mathematics Journal}, \textbf{42}(3) (1993), 875-887.




\bibitem{KKMTM-DCDSB} {K. Kinra} and M.T. Mohan, Random attractors and invariant measures for stochastic convective Brinkman-Forchheimer equations on 2D and 3D unbounded domains, \emph{Discrete Contin. Dyn. Syst. Ser. B}, \textbf{29}(1) (2024), 377-425. 

\bibitem{KK+MTM-SCBF} K. Kinra and M.T. Mohan, Stochastic convective Brinkman-Forchheimer equations on general unbounded domains, \url{https://arxiv.org/pdf/2007.09376}.

\bibitem{KK+FC+MTM-SCBF} K. Kinra, F. Cipriano and M.T. Mohan, Martingale solution, invariant measure and ergodicity for stochastic convective Brinkman-Forchheimer equations on general domains in $\mathbb{R}^d$, \url{https://arxiv.org/pdf/2109.05510v2}.




\bibitem{HKTY}   H. Kozono and T. Yanagisawa, $L^r$-variational inequality for vector fields and the Helmholtz-Weyl decomposition in bounded domains, \emph{Indiana Univ. Math. J.}, {\bf  58}(4) (2009), 1853-1920.




\bibitem{KS} A.~Kulik and M.~Scheutzow, Generalized couplings and convergence of transition probabilities, {\em Probability Theory and Related Fields}, \textbf{171}(1-2) (2018), 333-376.

\bibitem{Kunstmann_2010}  P.C. Kunstmann, Navier-Stokes equations on unbounded domains with rough initial data, {\it Czechoslovak Math. J.}, {\bf 60(135)}(2) (2010), 297-313. 



\bibitem{LP} A. Larios and Y. Pei, Approximate continuous data assimilation of the 2{D} {N}avier-{S}tokes equations via the {V}oigt-regularization with observable data, {\em Evol. Equ. Control Theory}, \textbf{9}(3) (2020), 733-751.

 



\bibitem{PAM}	P.A. Markowich, E.S. Titi and S. Trabelsi,	Continuous data assimilation for the three-dimensional Brinkman-Forchheimer-extended Darcy model, \emph{Nonlinearity}, {\bf 29}(4) (2016), 1292-1328.









\bibitem{MTM9} 	M. T. Mohan,   $\mathbb{L}^p$-solutions of deterministic and stochastic convective Brinkman-Forchheimer equations, \emph{Analysis and Mathematical Physics}, {\bf 11} (2021), Ar. No.: 164. 





\bibitem{MTM6} M. T. Mohan,  Well-posedness and asymptotic behavior of stochastic convective Brinkman-Forchheimer equations perturbed by pure jump noise, \emph{Stoch PDE: Anal Comp}, {\bf 10}(2) (2022), 614-690.


\bibitem{NKC_2025_Submit}  R. Nouira, K. Kinra and F. Cipriano, Continuous data assimilation for a class of non-Newtonian fluids of differential type in 2D and 3D, (2025).




\bibitem{Temam_1984} R. Temam, \emph{Navier-Stokes Equations, Theory and Numerical Analysis}, North-Holland, Amsterdam, 1984.






\bibitem{TV_DCDS-S_2025}  E.S. Titi, C. Victor, On the inadequacy of nudging data assimilation algorithms for non-dissipative systems. \emph{Discrete Contin. Dyn. Syst. Ser. S}, (2025). \url{10.3934/dcdss.2025151}





\bibitem{Zhou_2012} Y. Zhou, Regularity and uniqueness for the 3D incompressible Navier–Stokes equations with damping, \emph{Applied Mathematics Letters}, \textbf{25} (2012), 1822-1825.


\end{thebibliography}
\end{document}